\newcolumntype{M}[1]{>{\centering\arraybackslash}m{#1}}
\newtheorem{theorem}{Theorem}
\newtheorem{corollary}{Corollary}[theorem]
\newtheorem{lemma}{Lemma}
\theoremstyle{definition}
\theoremstyle{remark}
\DeclareMathOperator{\caps}{caps}
\DeclareMathOperator{\connpts}{connpts}
\title{\textbf{\MakeUppercase{Tile Numbers of Knot Corner Mosaics}}}
\author{\bf Ezra Jordan Aylaian\\
Duke University}
\date{\today}
\begin{document}

\maketitle

%\begin{abstract}
    %A knot mosaic is a grid of pictorial tiles representing a tame knot or link. Recently, two groups independently introduced a new set of tiles. We call mosaics made with these new tiles corner mosaics. The (corner) tile number is the minimum number of tiles needed to represent a knot or link as a (corner) mosaic. We show that the corner tile number lies strictly between the tile number and $3$ times the tile number, resolving a question of Heap et al. We also show that the only knots and links with corner tile number $<12$ and no unlinked, unknotted components are the Hopf link $P(1,1)$, the trefoil knot $3_1$, Solomon's knot $P(1,1,1,1)$, the connect sum of two Hopf links $P(1,1) \# P(1,1)$, the cinquefoil knot $5_1$, the star of David link $P(1,1,1,1,1,1)$, the figure-eight knot $4_1$, and the three-twist knot $5_2$.
%\end{abstract}

\section{Introduction}

In 2008, Lomonaco and Kauffman \cite{lomonacokauffman} introduced knot mosaics, a system for representing links as two dimensional grids of tiles. Lomonaco and Kauffman's tileset is shown in Figure \ref{fig:edgetiles}. In 2014, Kuriya and Shehab \cite{kuriyashehab} proved that knot mosaic theory is equivalent to tame knot theory.

\begin{figure}[h]
    \centering
    
    \begin{subfigure}{.075\textwidth}
        \centering
        \includegraphics[width=.7\textwidth]{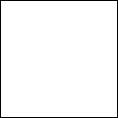}
        \caption{$T_0$}
    \end{subfigure}
    \begin{subfigure}{.075\textwidth}
        \centering
        \includegraphics[width=.7\textwidth]{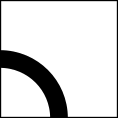}
        \caption{$T_1$}
    \end{subfigure}
    \begin{subfigure}{.075\textwidth}
        \centering
        \includegraphics[width=.7\textwidth]{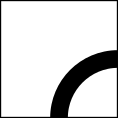}
        \caption{$T_2$}
    \end{subfigure}
    \begin{subfigure}{.075\textwidth}
        \centering
        \includegraphics[width=.7\textwidth]{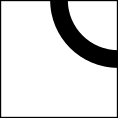}
        \caption{$T_3$}
    \end{subfigure}
    \begin{subfigure}{.075\textwidth}
        \centering
        \includegraphics[width=.7\textwidth]{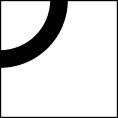}
        \caption{$T_4$}
    \end{subfigure}
    \begin{subfigure}{.075\textwidth}
        \centering
        \includegraphics[width=.7\textwidth]{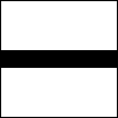}
        \caption{$T_5$}
    \end{subfigure}
    \begin{subfigure}{.075\textwidth}
        \centering
        \includegraphics[width=.7\textwidth]{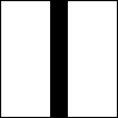}
        \caption{$T_6$}
    \end{subfigure}
    \begin{subfigure}{.075\textwidth}
        \centering
        \includegraphics[width=.7\textwidth]{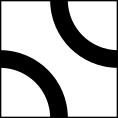}
        \caption{$T_7$}
    \end{subfigure}
    \begin{subfigure}{.075\textwidth}
        \centering
        \includegraphics[width=.7\textwidth]{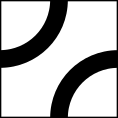}
        \caption{$T_8$}
    \end{subfigure}
    \begin{subfigure}{.075\textwidth}
        \centering
        \includegraphics[width=.7\textwidth]{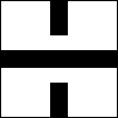}
        \caption{$T_9$}
    \end{subfigure}
    \begin{subfigure}{.075\textwidth}
        \centering
        \includegraphics[width=.7\textwidth]{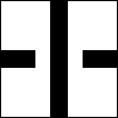}
        \caption{$T_{10}$}
    \end{subfigure}
    
    \caption{The edge tiles.}
    \label{fig:edgetiles}
\end{figure}

One natural invariant of a link defined using mosaics is the tile number, the minimum number of nonempty tiles needed to represent a given linkn as a mosaic. It was introduced in 2018 by Heap and Knowles \cite{heapknowles}. By 2019, Heap and Knowles \cite{heapknowles2} had enumerated all prime knots of tile number 24 or less and computed their tile numbers. Heap and LaCourt \cite{heaplacourt} did further work published in 2020.

However, Lomonaco and Kauffman's set of tiles is not the only natural set of tiles that can be used to build a mosaic theory. Aaron Heap et al. \cite{heap} and Eric Rawdon, Sayde Jude, and Lizzie Paterson \cite{judepaterson} independently concocted and studied the same alternative set of tiles. We call their new set of tiles the corner tiles and mosaics made with them corner mosaics. For clarity, we call Lomonaco and Kauffman's tiles the edge tiles and mosaics made with them edge mosaics. Figure \ref{fig:cornertiles} lists the corner tiles. Note that the pattern in each corner tile is just the pattern in the corresponding edge tile rotated clockwise $\pi/4$ radians.\footnote{To make this be true, our convention differs from \cite{heap} by a swap of $T_9$ and $T_{10}$.} We call $T_9$ and $T_{10}$ crossing tiles. Table \ref{tbl:examples} shows examples of corner mosaics.

\begin{figure}[h]
    \centering
    \begin{subfigure}{.075\textwidth}
        \centering
        \includegraphics[width=.7\textwidth]{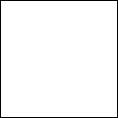}
        \caption{$T_0$}
    \end{subfigure}
    \begin{subfigure}{.075\textwidth}
        \centering
        \includegraphics[width=.7\textwidth]{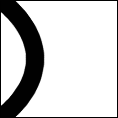}
        \caption{$T_1$}
    \end{subfigure}
    \begin{subfigure}{.075\textwidth}
        \centering
        \includegraphics[width=.7\textwidth]{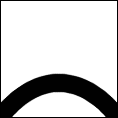}
        \caption{$T_2$}
    \end{subfigure}
    \begin{subfigure}{.075\textwidth}
        \centering
        \includegraphics[width=.7\textwidth]{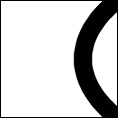}
        \caption{$T_3$}
    \end{subfigure}
    \begin{subfigure}{.075\textwidth}
        \centering
        \includegraphics[width=.7\textwidth]{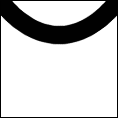}
        \caption{$T_4$}
    \end{subfigure}
    \begin{subfigure}{.075\textwidth}
        \centering
        \includegraphics[width=.7\textwidth]{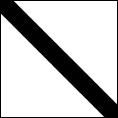}
        \caption{$T_5$}
    \end{subfigure}
    \begin{subfigure}{.075\textwidth}
        \centering
        \includegraphics[width=.7\textwidth]{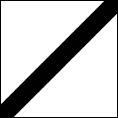}
        \caption{$T_6$}
    \end{subfigure}
    \begin{subfigure}{.075\textwidth}
        \centering
        \includegraphics[width=.7\textwidth]{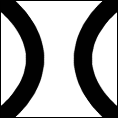}
        \caption{$T_7$}
    \end{subfigure}
    \begin{subfigure}{.075\textwidth}
        \centering
        \includegraphics[width=.7\textwidth]{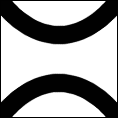}
        \caption{$T_8$}
    \end{subfigure}
    \begin{subfigure}{.075\textwidth}
        \centering
        \includegraphics[width=.7\textwidth]{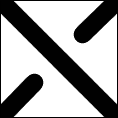}
        \caption{$T_9$}
    \end{subfigure}
    \begin{subfigure}{.075\textwidth}
        \centering
        \includegraphics[width=.7\textwidth]{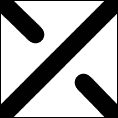}
        \caption{$T_{10}$}
    \end{subfigure}
    
    \caption{The corner tiles.}
    \label{fig:cornertiles}
\end{figure}

\bgroup
\def\arraystretch{3.5}
\begin{table}[h]
    \centering
    \begin{tabular}{cM{24mm}M{24mm}M{24mm}}
        & \includegraphics[width=.065\textwidth]{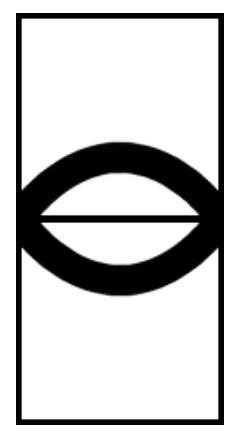}
        & \includegraphics[width=.17\textwidth]{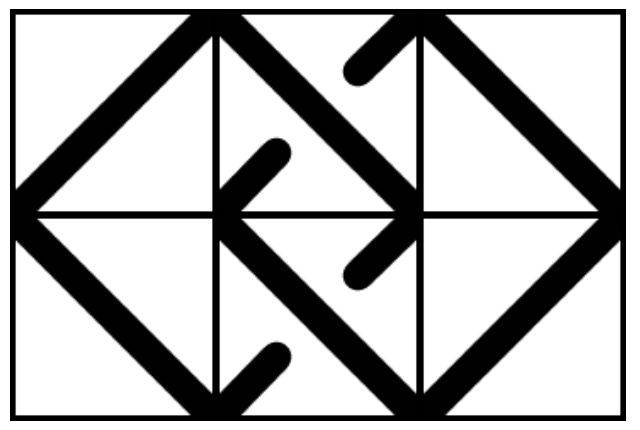}
        & \includegraphics[width=.17\textwidth]{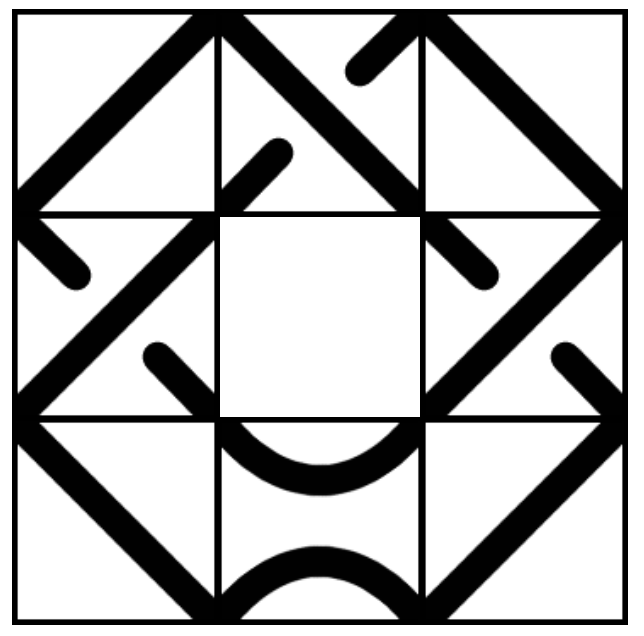} \\
        & \includegraphics[width=.17\textwidth]{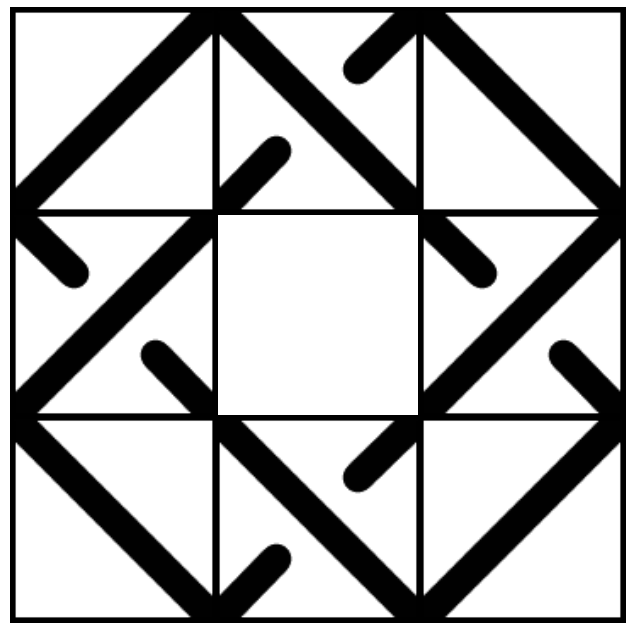}
        & \includegraphics[width=.17\textwidth]{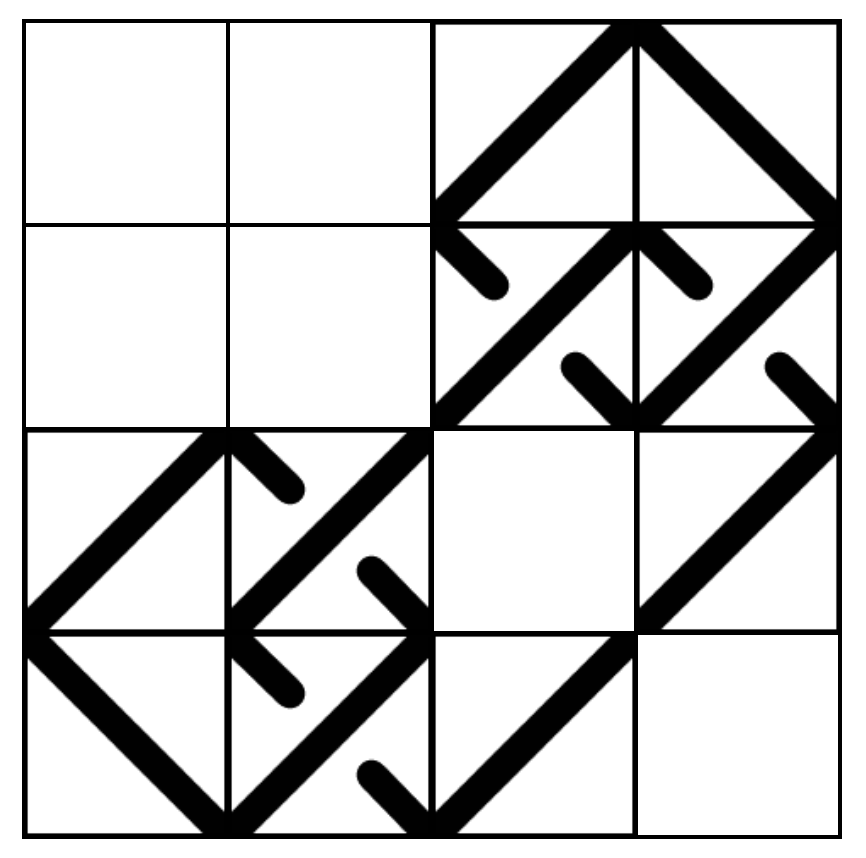}
        & \includegraphics[width=.17\textwidth]{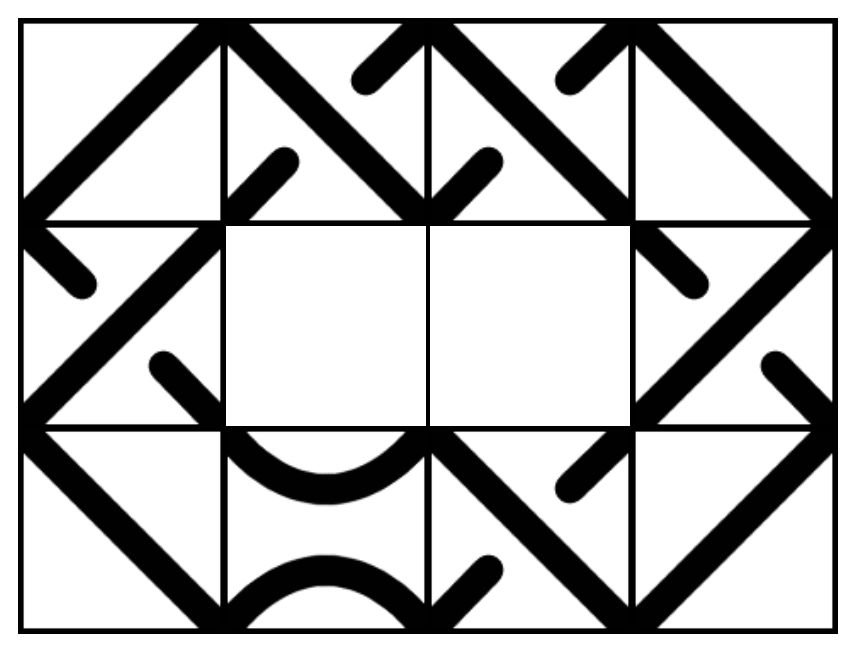} \\
        & \includegraphics[width=.17\textwidth]{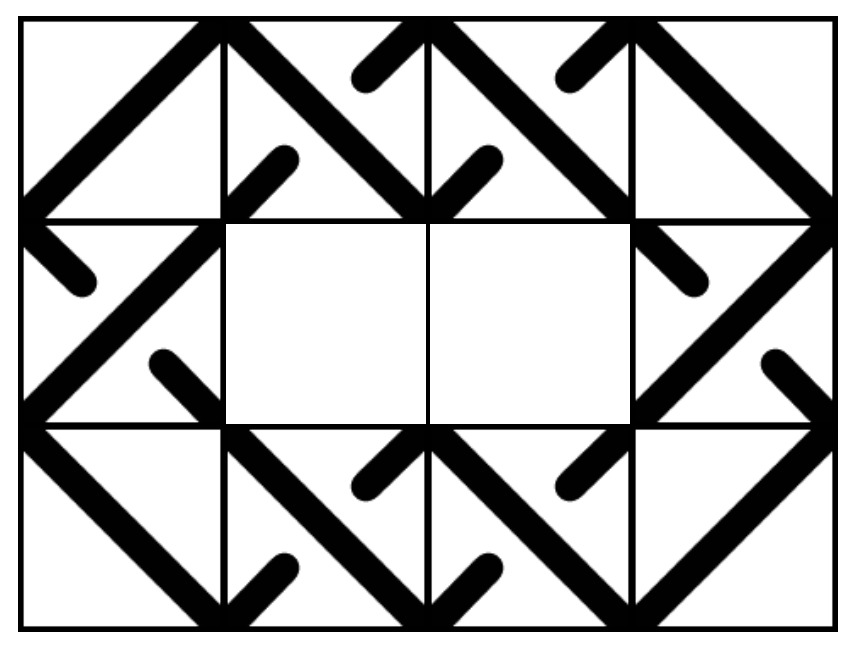}
        & \includegraphics[width=.17\textwidth]{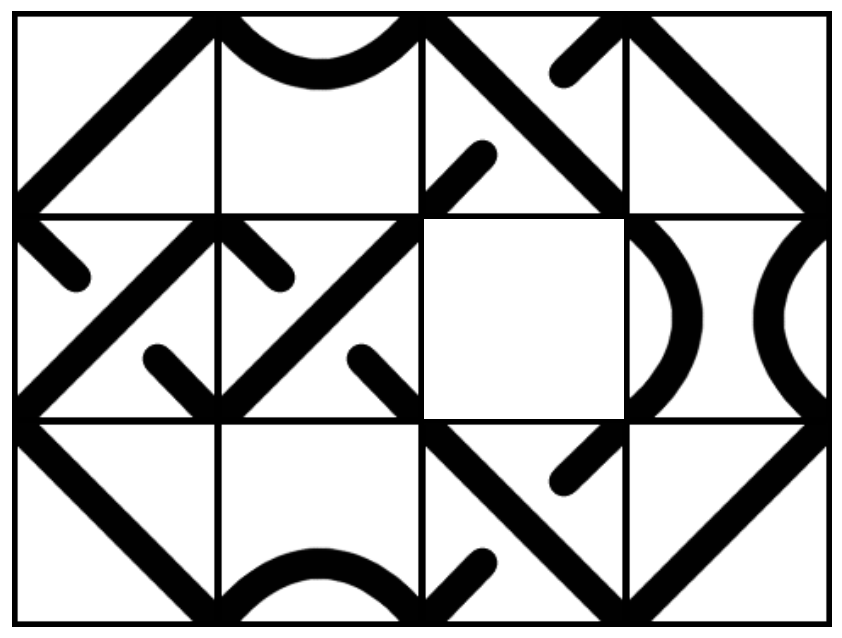}
        & \includegraphics[width=.17\textwidth]{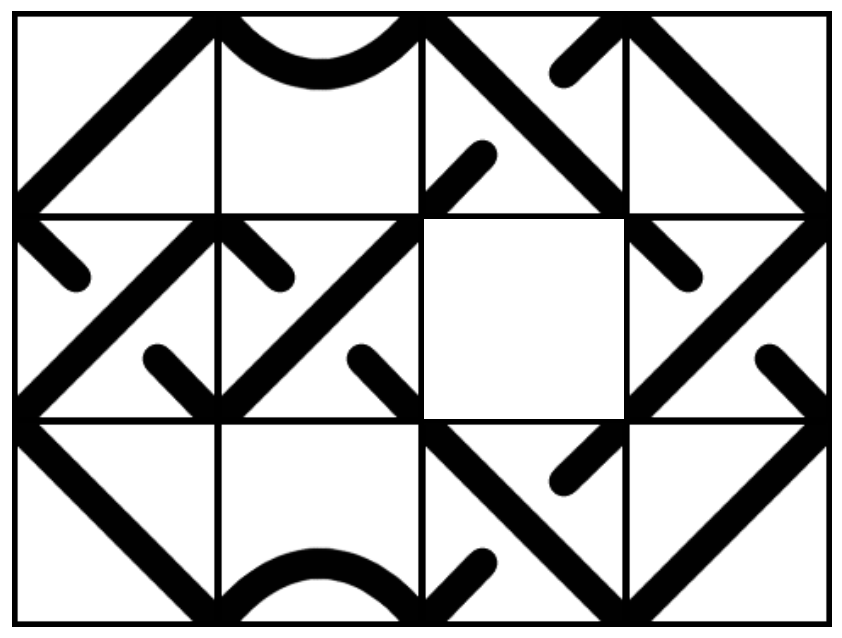}
    \end{tabular}
    \vspace{-.8em}
    \caption{Corner mosaics for, from left to right and top to bottom, the unknot, the Hopf link $P(1,1)$, the trefoil knot $3_1$, Solomon's knot $P(1,1,1,1)$, the connect sum of two Hopf links $P(1,1) \# P(1,1)$, the cinquefoil knot $5_1$, the star of David link $P(1,1,1,1,1,1)$, the figure-eight knot $4_1$, and the three-twist knot $5_2$.}
    \label{tbl:examples}
\end{table}
\egroup

Every mosaic system has an associated tile number, the minimum number of nonempty tiles needed to represent a link as a mosaic in that system. We denote the edge tile number of a link $L$ as $t(L)$ and the corner tile number as $t_C(L)$. The edge tile number is usually just called the tile number. Heap et al. \cite{heap} ask whether $t_C(K) \leq t(K)$ for all knots $K$. Theorem \ref{thm:main} answers this question affirmatively: for all links $L$,
\begin{equation*}
    t_C(L) + \caps(L) \leq t(L),
\end{equation*}
where $\caps(L)$ is a positive integer that is $\geq 4$ for $L$ with no unlinked, unknotted components. This shows that corner mosaics encode links more efficiently than edge mosaics, though it is not known whether the improvement grows asymptotically. On the other hand, Theorem \ref{thm:reverse} says that for any link $L$,
$$t_C(L) + \connpts(L) \geq t(L),$$
where $\connpts(L)$ is a positive integer that is bounded above by $2t_C(L) - 2$. This establishes
\begin{equation*}
    3t_C(L) - 2 \geq t(L).
\end{equation*}

Next, we want to classify links with small corner tile numbers. We reduce the problem to a combinatorial search which is carried out using a computer. The result is Theorem \ref{thm:classification}, which states that the only knots and links with corner tile number $<12$ and no unlinked, unknotted components are the Hopf link $P(1,1)$, the trefoil knot $3_1$, Solomon's knot $P(1,1,1,1)$, the connect sum of two Hopf links $P(1,1) \# P(1,1)$, the cinquefoil knot $5_1$, the star of David link $P(1,1,1,1,1,1)$, the figure-eight knot $4_1$, and the three-twist knot $5_2$.

\section{Inequalities Relating Tile and Corner Tile Numbers}

A link $L$ is said to have no unlinked, unknotted components if it does not have an unknotted component which is unlinked from the rest of $L$. For a link $L$, define the set of edge mosaics representing the tile number of $L$ as
\begin{equation*}
    \mathscr M^L = \{ M \mid M \text{ is an edge mosaic for } L \text{ with } t(L) \text{ nonempty tiles} \}.
\end{equation*} 
The set $\mathscr M_C^L$ of corner mosaics representing the corner tile number of $L$ is defined similarly.

\subsection*{Corner Mosaics Are More Efficient Than Edge Mosaics}

In the context of an edge mosaic, a cap is two edge-adjacent tiles that have the form of Figure \ref{fig:cap} up to rotation. Define
\begin{equation*}
    \caps(L) = \max \{ \text{number of caps in } M \mid M \in \mathscr M^L \},
\end{equation*}
where the counting is done so that any one tile may contribute to at most one cap. For example, Figure \ref{fig:unknotedge} shows that $\caps(\text{unknot}) = 2$. If $L$ has no unlinked, unknotted components, caps cannot overlap in a mosaic in $\mathscr M^L$, since if there are two overlapping caps in three tiles of a 2x2 submosaic, no matter which of the 11 corner tiles goes in the remaining slot, the number of nonempty tiles can be reduced.

\begin{figure}[ht]
    \centering
    \includegraphics[width=.8in]{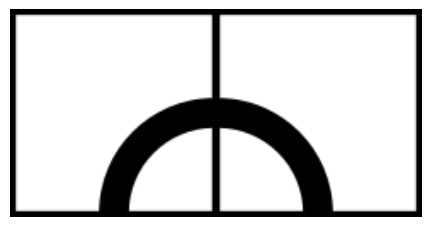}
    \caption{A cap.}
    \label{fig:cap}
\end{figure}

\begin{figure}[ht]
    \centering
    \includegraphics[width=.8in]{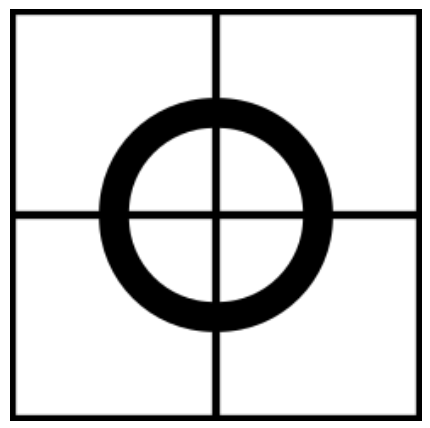}
    \caption{$\caps(\text{unknot}) = 2$.}
    \label{fig:unknotedge}
\end{figure}

\begin{theorem} \label{thm:main}
    For any link $L$, $t_C(L) + \caps(L) \leq t(L)$.
\end{theorem}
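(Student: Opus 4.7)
The plan is to start with an edge mosaic $M \in \mathscr{M}_L$ that achieves the maximum $\caps(L)$ caps, and from it construct a corner mosaic $M'$ for $L$ with $\caps(L)$ fewer nonempty tiles; this will establish $t_C(L) \leq t(L) - \caps(L)$, which is the theorem. My key leverage is that each corner tile is, by definition, an edge tile rotated clockwise by $\pi/4$: the same strand pattern, just with strand endpoints relocated from edge midpoints to tile corners. In particular, the rotation of an edge elbow is a corner tile whose two strand endpoints lie at two adjacent corners on the same tile edge, which is exactly the shape of a cap's $U$-arc. So each cap, which costs two edge tiles, should be expressible as a single corner tile.

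I would first lay out $M'$ by rotating the planar diagram $M$ draws of $L$ by $\pi/4$ clockwise and embedding it into a corner-mosaic grid (of size on the order of $(2n-1) \times (2n-1)$ if $M$ is $n \times n$). Each rotated edge tile lands on one cell of a parity sublattice of the corner grid, and the shared edge midpoint of two adjacent edge tiles in $M$ becomes a shared corner of two diagonally adjacent cells in $M'$, carrying the same strand state. Next, for each of the $\caps(L)$ caps, the two rotated cap tiles sit in diagonally adjacent cells whose arcs meet at a single shared corner. I would collapse each such pair into a single corner-elbow tile placed in the empty cell diagonally opposite, positioned so that its two strand-bearing adjacent corners coincide with the two external endpoints of the cap's arc. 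The original $U$-shape and the collapsed single arc are planar-isotopic arcs between the same two endpoints, so $L$ is preserved, and each collapse saves one tile. Because $M$ achieves $\caps(L)$ and its caps may be taken pairwise disjoint (by the definition of $\caps$ and, for unknot-free $L$, automatically as observed in the paper), the collapses do not interfere, and $M'$ ends up with $\bigl(t(L) - 2\caps(L)\bigr) + \caps(L) = t(L) - \caps(L)$ nonempty tiles.

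The step I expect to be the main obstacle is verifying that $M'$ is genuinely a valid corner mosaic, meaning that at every grid corner the four incident tiles agree on whether a strand terminates there. Consistency at corners untouched by any cap collapse follows directly from the edge-midpoint consistency of $M$. At a collapsed cap, the corner at which the two original cap tiles used to meet should now carry no strand, and this is automatic because the only other cells incident to that corner were already blank. The delicate case is the two other corners of the new corner-elbow tile: in some neighborhoods a non-cap rotated edge tile may force a strand endpoint at one of them, and the simple elbow replacement no longer matches. I expect the proof to handle this by a local case analysis of the possible configurations of tiles adjacent to each cap, substituting a corner crossing, corner straight, or a slightly adjusted elbow where necessary, while ensuring the net tile count remains at $t(L) - \caps(L)$; alternatively, one might argue directly that maximal-cap edge mosaics cannot produce the offending configurations.
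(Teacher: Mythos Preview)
Your plan is the paper's proof: rotate the edge mosaic by $\pi/4$ to obtain a checkerboard corner mosaic with the same number of nonempty tiles, then ``push in'' each cap to save one tile per cap. The paper dispatches this in a few lines and a figure.

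Two small corrections to your description. First, the single replacement tile is not an elbow but a diagonal $T_5$ or $T_6$: the two external endpoints of a cap land on \emph{diagonally opposite} corners of the target white cell, not on adjacent corners. Second, the ``delicate case'' you flag never arises, so no local case analysis is needed. Of the two remaining corners of the push-in cell, one is the cap's former internal meeting corner, which after the push-in is bordered only by the two now-emptied cap cells and a still-empty white cell, hence carries no strand; the other corner lies between two untouched black cells, and the suitable-connectedness of the original edge mosaic guarantees those two either both carry a strand there (already matched to each other) or both do not. Either way the new $T_5/T_6$, which has no strand at that corner, is consistent. So the construction goes through without any substitutions of crossings or adjusted elbows, and the paper does not perform any.
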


\begin{proof}
    Choose an edge mosaic for $L$ with $t(L)$ nonempty tiles and $\caps(L)$ caps. We describe an algorithm which is illustrated in Table \ref{tbl:proof} from left to right. Rotate the mosaic $\pi/4$ radians clockwise and superimpose a secondary grid. Then remove the original grid and keep the secondary grid. This gives us a corner mosaic with nonempty tiles in a checkerboard pattern. Each edge tile maps to the corresponding corner tile in the checkerboard. However, for every cap in the mosaic we started with, we end up with two tiles which can be ``pushed in'' to a previously empty tile. Pushing in all the caps reduces the number of nonempty tiles by the number of caps. This gives a corner mosaic for $L$ with $t(L) - \caps(L)$ nonempty tiles.
\end{proof}

\begin{table}[h]
    \centering
    \begin{tabular}{cM{16mm}M{21.6mm}M{21.6mm}M{19.2mm}M{12.8mm}}
        & \includegraphics[width=.13\textwidth]{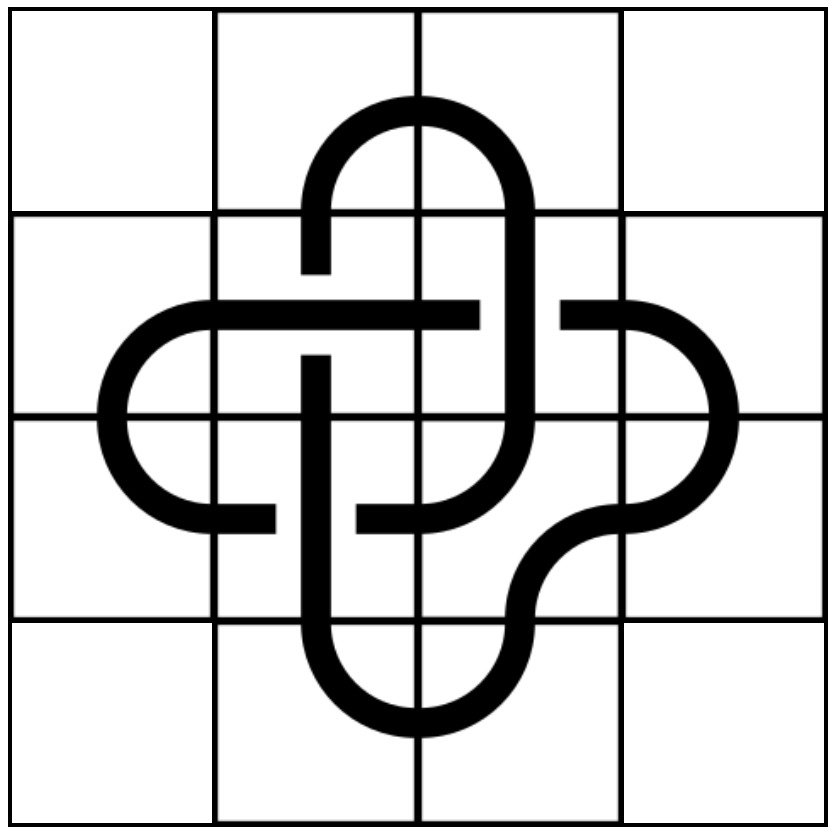}
        & \includegraphics[width=.17\textwidth]{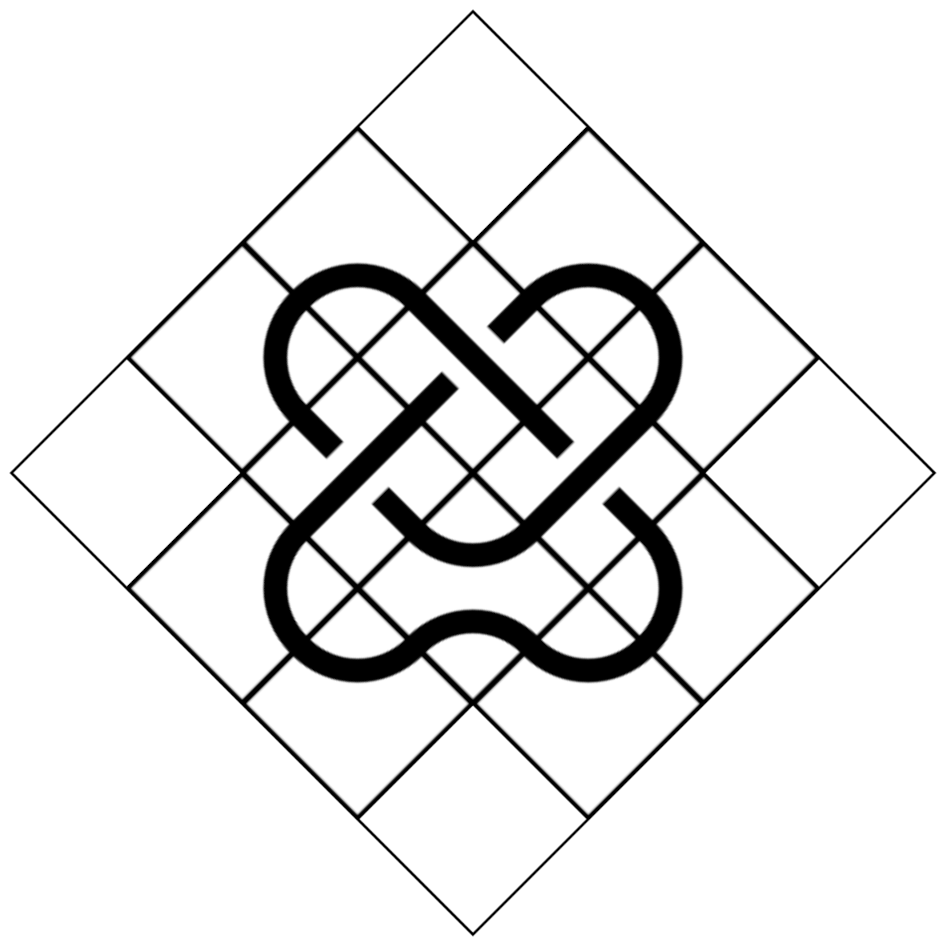}
        & \includegraphics[width=.17\textwidth]{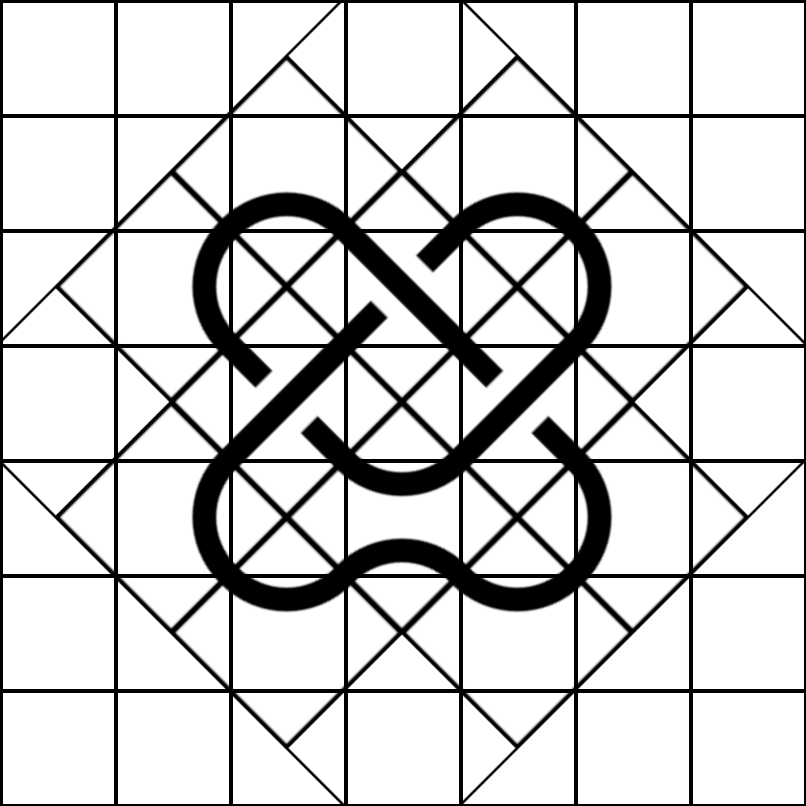}
        & \includegraphics[width=.15\textwidth]{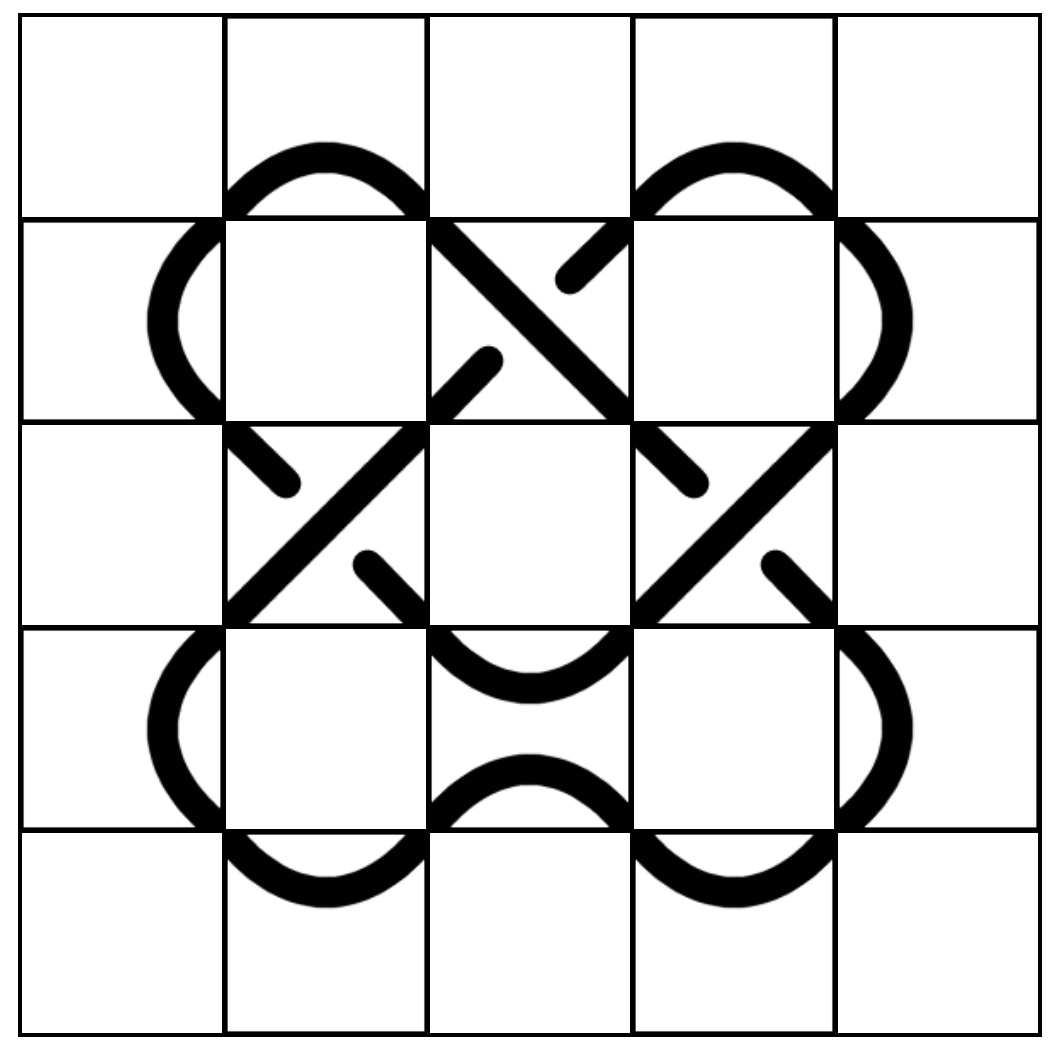}
        & \includegraphics[width=.10\textwidth]{Mosaics/RHT}
    \end{tabular}
    \caption{Rotate, replace grid, and push in caps.}
    \label{tbl:proof}
\end{table}

Define a checkerboard corner mosaic as a corner mosaic where every other square in a checkerboard pattern is empty. The proof of Theorem \ref{thm:main} establishes a tile-number-preserving bijection between edge mosaics and checkerboard corner mosaics.

\begin{corollary}
    If $L$ has no unlinked, unknotted components, $t_C(L) + 4 \leq t(L)$.
\end{corollary}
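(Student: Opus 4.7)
By Theorem \ref{thm:main} it is enough to prove that $\caps(L) \geq 4$ whenever $L$ is unknot-free, since then $t_C(L) + 4 \leq t_C(L) + \caps(L) \leq t(L)$. The plan is to exhibit, in a suitably chosen minimal edge mosaic $M \in \mathscr M_L$, one cap opening in each of the four cardinal directions, yielding four disjoint caps.

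I would start at the top. Consider the topmost row of $M$ that contains any nonempty tile. A nonempty tile in this row cannot have a strand on its northern edge, since the tile directly above is empty and an outward-pointing strand would be an open end. Going through the eleven edge tiles, this forces each nonempty occupant of the top row to be either one of the two corner tiles whose arcs avoid the north edge, or the horizontal straight tile. Within a maximal horizontal run of nonempty tiles in this row, the leftmost tile additionally cannot emit a strand west, so it must be the corner connecting south and east; the rightmost must be the corner connecting south and west; and the tiles between them must all be horizontal straights. A run of length exactly two is precisely a cap opening south.

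The substantive step, which I expect to be the main obstacle, is to show that any run of length three or more can be shortened without changing the knot type or increasing the tile count. The tiles directly below the interior horizontal straights are themselves barred from carrying a northbound strand, so they come from the same four-element set (empty, the two southern corners, or horizontal straight). This restricted local structure should permit a planar isotopy that pulls the flat cap down into the row below, converting a width-$k$ flat cap into a width-two tight cap while relocating the intermediate straights one row lower. Making this rigorous requires a case analysis on the tile under each interior straight and on how the reduction interacts with adjacent blocks and with the mosaic boundary; the observation already recorded for unknot-free links -- that two overlapping caps in a $2 \times 2$ submosaic permit a tile-count reduction -- handles several subcases.

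Having secured a cap opening south, rotating $M$ by $90^\circ$, $180^\circ$, and $270^\circ$ and repeating the argument produces caps opening north, east, and west in the bottom row, leftmost column, and rightmost column respectively. These four caps lie in four distinct extremal rows or columns of $M$ and therefore occupy disjoint tiles, giving $\caps(L) \geq 4$ and completing the corollary via Theorem \ref{thm:main}.
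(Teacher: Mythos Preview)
Your overall strategy coincides with the paper's: reduce the corollary to $\caps(L)\geq 4$ via Theorem~\ref{thm:main}, and produce one cap along each extremal row and column of a minimal edge mosaic. The paper, however, does not carry out the reduction argument you sketch; it simply invokes Lemma~6 of \cite{heapknowles}, which already guarantees the existence of an $M\in\mathscr M_L$ whose outermost nonempty rows and columns consist solely of caps and empty tiles. What you call ``the substantive step''---shortening a run of length $\geq 3$ in the top row without increasing the tile count---is exactly the content of that cited lemma, and you have only outlined it. Your description (``should permit a planar isotopy \ldots\ making this rigorous requires a case analysis'') is a fair plan, but as written it is not a proof; the interaction between the pushed-down straights and whatever already occupies the row beneath (possibly corner tiles that then merge nontrivially) is where the work lies.

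There is also a genuine gap in your final sentence. The four caps lying in ``four distinct extremal rows or columns'' does \emph{not} by itself force them to occupy disjoint tiles: the top row and the leftmost column can share a corner tile, and a south-opening cap and an east-opening cap can overlap there in exactly the three-tiles-in-a-$2\times 2$ configuration the paper discusses before Theorem~\ref{thm:main}. The paper closes this gap explicitly by invoking that observation---for unknot-free $L$, overlapping caps in a minimal mosaic permit a tile reduction, hence cannot occur---to conclude the four extremal caps are disjoint. You mention this fact only in connection with the row-shortening subcases; you need to invoke it again at the end.
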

\begin{proof}
    By Lemma 6 of \cite{heapknowles}, for any link $L$, there is an edge mosaic in $\mathscr M^L$ such that the topmost nonempty row, bottommost nonempty row, leftmost nonempty column, and rightmost nonempty column contain only caps and empty tiles. If $L$ has no unlinked, unknotted components, the caps in these rows and columns don't overlap, so $\caps(L) \geq 4$.
\end{proof}

If $L$ is the unknot, then $t_C(\text{unknot}) = 2$, $t(\text{unknot}) = 4$, and $\caps(\text{unknot}) = 2$. An interesting question we do not answer is whether there exist $A,B$ with $A > 1$ and $At_C(L) + B \leq t(L)$.

\subsection*{Inequalities in the Other Direction}

The algorithm in the proof of Theorem \ref{thm:main} does not always take a representation of the edge tile number to a representation of the corner tile number. In these cases, $t_C(L) + \caps(L) < t(L)$. For example, $t(\text{Hopf link}) = 12$ and $\caps(\text{Hopf link}) = 4$, but $t_C(\text{Hopf link}) = 6$. Observe further that if we start with the corner mosaic for the Hopf link from Table \ref{tbl:examples}---which is a representation of the corner tile number---and ``push out'' the $T_5$ and $T_6$ tiles, we do not get a checkerboard mosaic. Hence it is just the last step of the proof that is not reversible and obstructs equality. Thinking about how to turn a corner mosaic into a checkerboard corner mosaic naturally leads us to the proof of the next theorem.

A connection point is a point on the boundary of a corner mosaic tile that is the endpoint of a curve drawn on the tile. For example, the corner mosaic for the Hopf link in Table \ref{tbl:examples} has 8 connection points. Define
\begin{equation*}
    \connpts(L) = \min \{ \text{number of connection points in } M \mid M \in \mathscr M_C^L \}.
\end{equation*}

\begin{theorem} \label{thm:reverse}
    For any link $L$, $t_C(L) + \connpts(L) \geq t(L)$.
\end{theorem}
\begin{proof}
    Choose a corner mosaic for $L$ with $t_C(L)$ nonempty tiles. Add in extra spacing and fill in extra tiles at each connection point of the original corner mosaic, as illustrated in Table \ref{tbl:proof2} from left to right. The result is a checkerboard corner mosaic with $t_C(L) + \connpts(L)$ nonempty tiles. Going through the tile-number-preserving bijection between edge mosaics and checkerboard corner mosaics gives an edge mosaic for $L$ with $t_C(L) + \connpts(L)$ nonempty tiles.
\end{proof}

\begin{table}[h]
    \centering
    \begin{tabular}{cM{20mm}M{40mm}M{40mm}}
        & \includegraphics[width=.12\textwidth]{Mosaics/RHT}
        & \includegraphics[width=.24\textwidth]{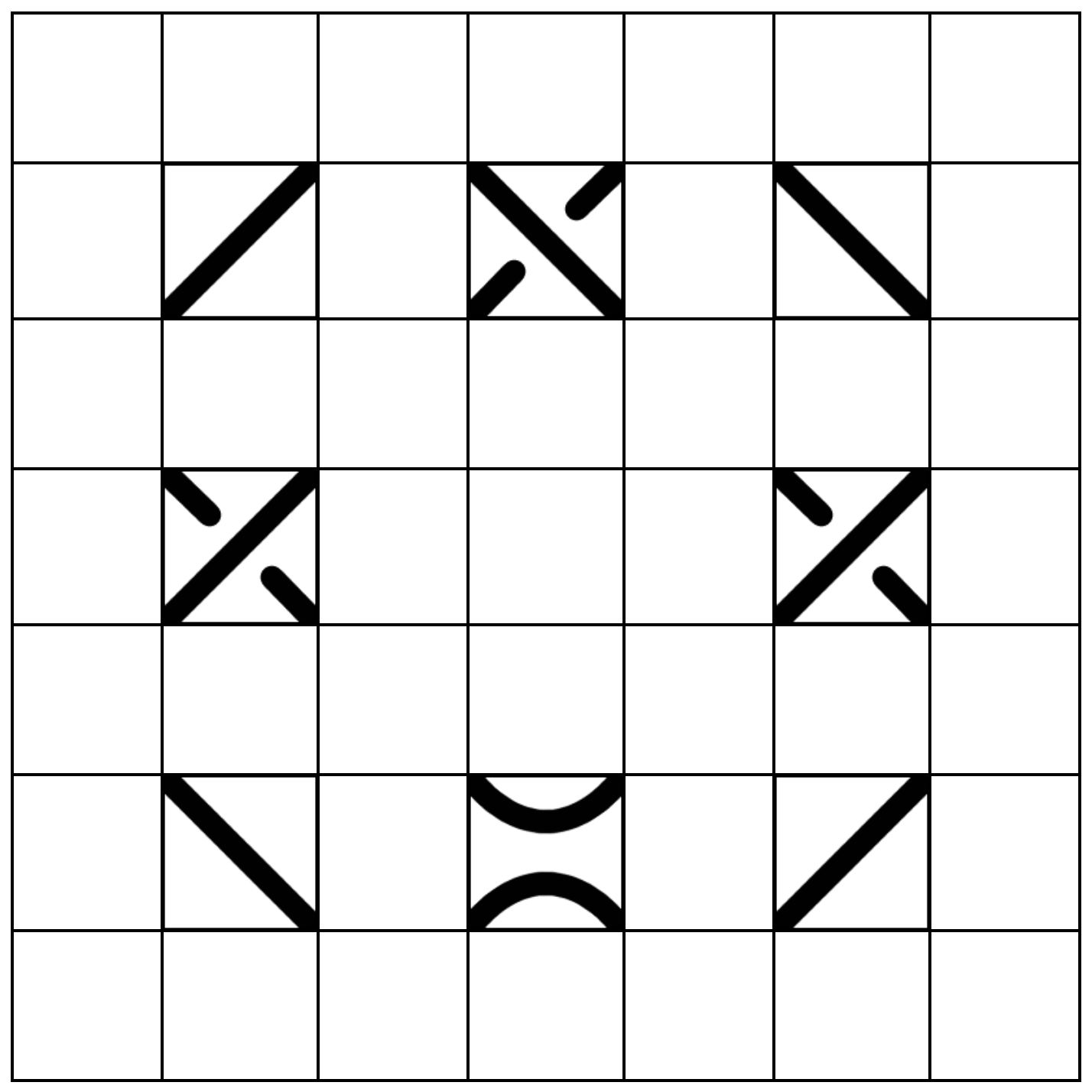}
        & \includegraphics[width=.24\textwidth]{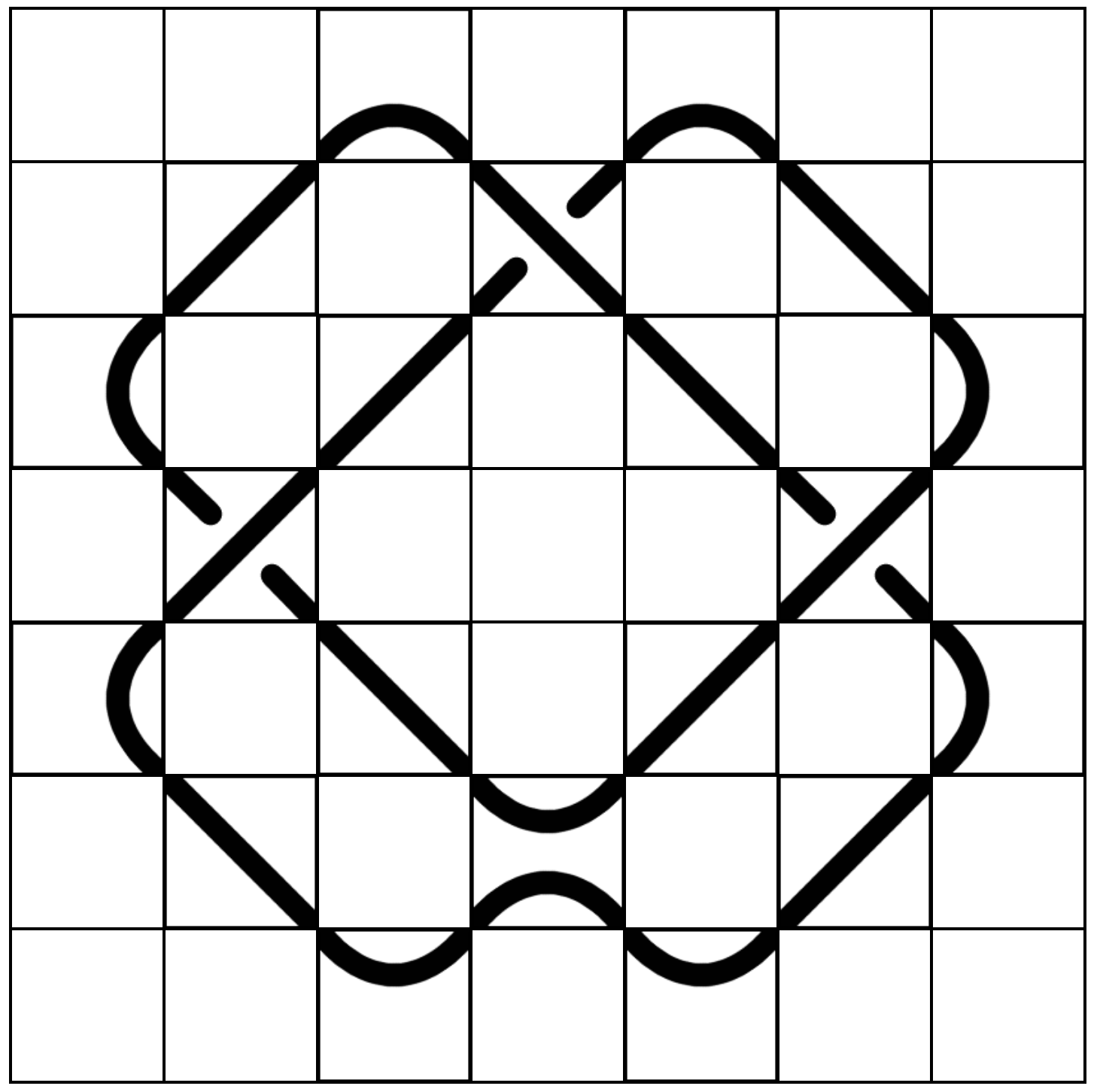}
    \end{tabular}
    \caption{Add extra spacing and fill in the missing connections.}
    \label{tbl:proof2}
\end{table}

\begin{corollary}
    For any link $L$, $3t_C(L) - 2 \geq t(L)$.
\end{corollary}
\begin{proof}
    We bound $\connpts(L)$. Mark the inside of each corner of each nonempty tile of the corner mosaic with 4 slots, so that there are exactly $4t_C(L)$ slots total. Each connection point takes up 2 slots. There are also 4 distinct slots that cannot be used by a connection point:
    \begin{enumerate}
        \item the bottom-left corner of the leftmost nonempty column's bottommost nonempty tile,
        % non-empty is hyphenated for page wrap
        \item the upper-left corner of the leftmost nonempty column's uppermost non-empty tile,
        \item the bottom-right corner of the rightmost nonempty column's bottommost nonempty tile,
        \item the upper-right corner of the rightmost nonempty column's uppermost nonempty tile.
    \end{enumerate}
    Hence there are at most $2t_C(L) - 2$ connection points.
\end{proof}

An interesting question we do not answer is to find the infimum of $A$ such that there exists a $B$ with $At_C(L) + B \geq t(L)$. Our results show that the infimum is $\geq 1$ and $\leq 3$.

\section{Links With Small Corner Tile Numbers}

The main theorem of this section classifies links with small corner tile numbers. We use figures of knot corner mosaics where the tiles are not filled in. In these figures, dark gray represents nonempty tiles, light gray represents possibly nonempty tiles, and white represents empty tiles.

\subsection*{Two Lemmas}

The following lemma, which describes what elements of $\mathscr M_C^L$ do and don't look like locally, is invaluable and will be used many times in the course of the classification.

\begin{lemma} \label{lmm:submosaics}
    Let $L$ have no unlinked, unknotted components and let $M \in \mathscr M_C^L$. Then mod rotation and reflection,
    \begin{enumerate}
        \item $M$ does not contain any of the subarrays depicted in Figure \ref{fig:canthappen},
        \item If $M$ has an instance of $T_2$, the tile below it is empty,
        \item If the subarray depicted in Figure \ref{fig:middleist5} occurs in $M$, the middle tile is $T_5$.
    \end{enumerate}
    Additionally, there exists a corner mosaic in $\mathscr M_C^L$ such that the subarrays in Figure \ref{fig:neednothappen} don't occur mod rotation and reflection.
    \begin{figure}[ht]
        \centering

        \begin{subfigure}{.18\textwidth}
            \centering
            \includegraphics[width=.8\textwidth]{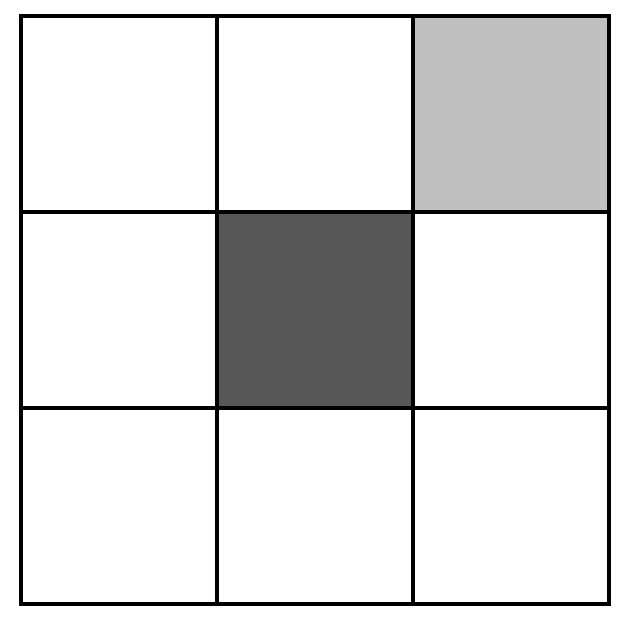}
        \end{subfigure}
        \begin{subfigure}{.18\textwidth}
            \centering
            \includegraphics[width=.8\textwidth]{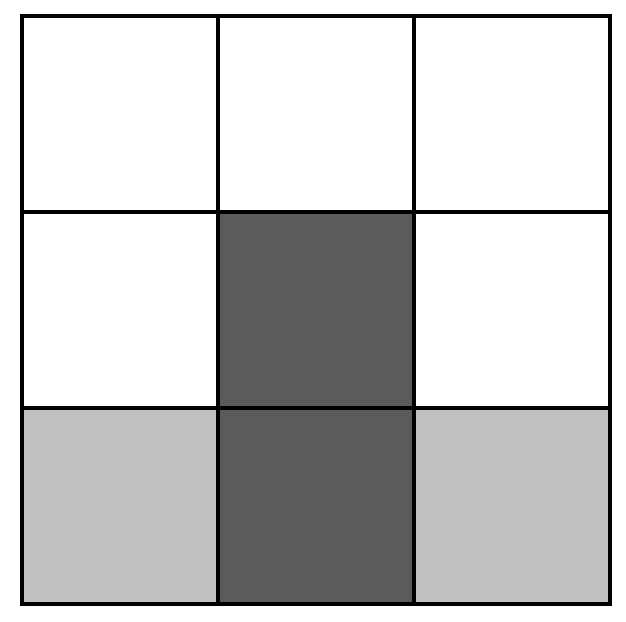}
        \end{subfigure}
        \begin{subfigure}{.18\textwidth}
            \centering
            \includegraphics[width=.8\textwidth]{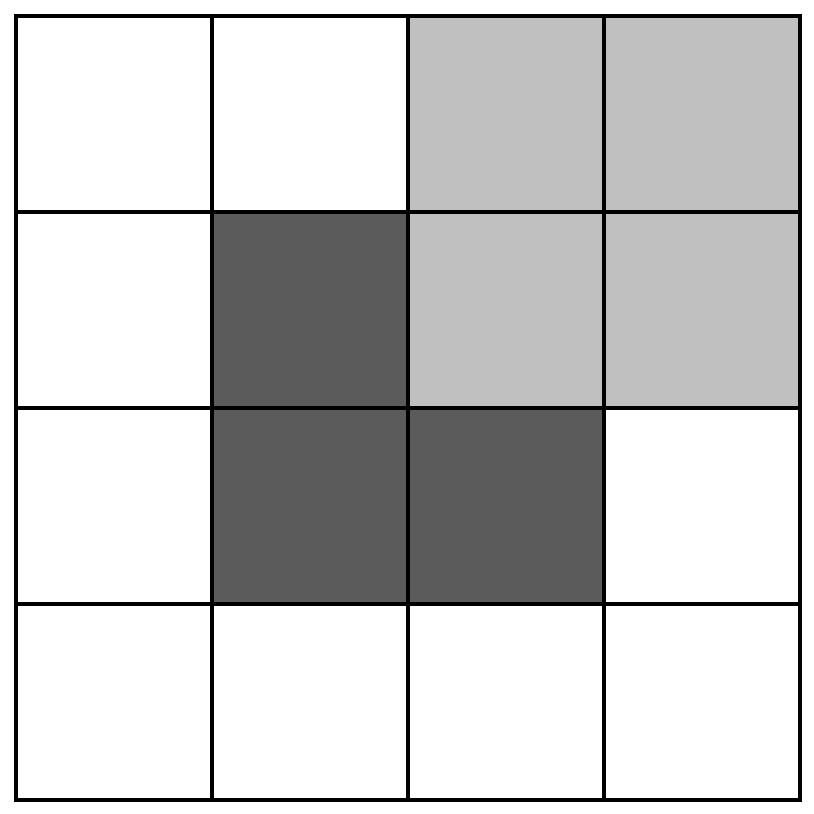}
        \end{subfigure}
        \begin{subfigure}{.21\textwidth}
            \centering
            \includegraphics[width=.9\textwidth]{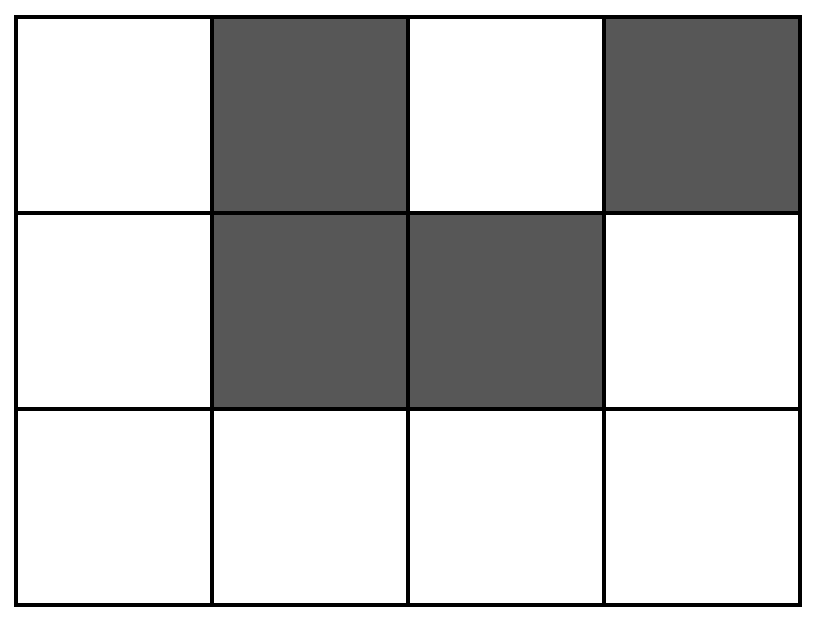}
        \end{subfigure}
        
        \caption{Subarrays that cannot occur in $M$.}
        \label{fig:canthappen}
    \end{figure}
    \begin{figure}[ht]
        \centering
        \includegraphics[width=.8in]{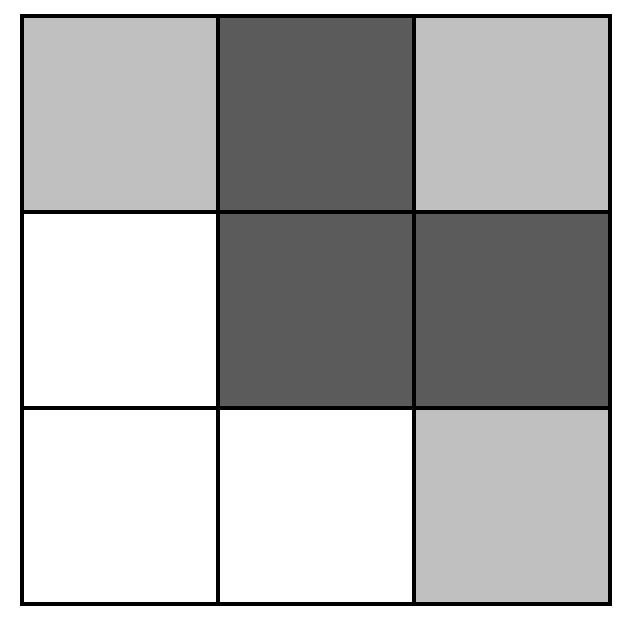}
        \caption{In this subarray, the middle tile must be $T_5$.}
        \label{fig:middleist5}
    \end{figure}
    \begin{figure}[H]
        \centering
        
        \begin{subfigure}{.18\textwidth}
            \centering
            \includegraphics[width=.8\textwidth]{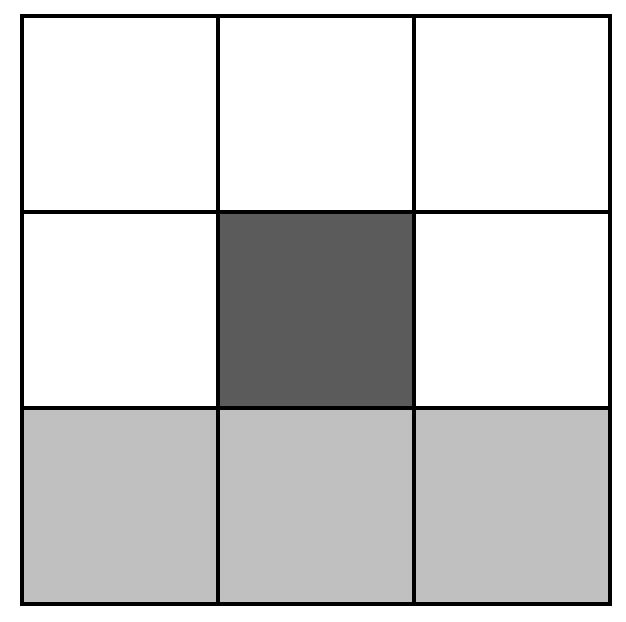}
        \end{subfigure}
        \hspace{.05em}
        \begin{subfigure}{.21\textwidth}
            \centering
            \includegraphics[width=.9\textwidth]{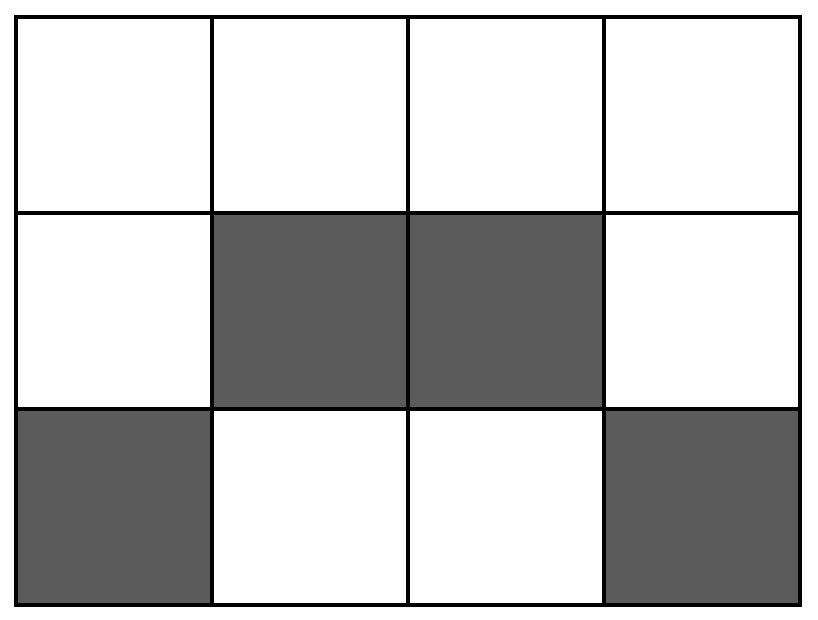}
        \end{subfigure}
        
        \caption{There exists a corner mosaic in $M_C^L$ that doesn't contain these subarrays.}
        \label{fig:neednothappen}
    \end{figure}
\end{lemma}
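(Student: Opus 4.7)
The plan is to establish all four claims by local case analysis leaning on two principles throughout: (i) \emph{minimality}, meaning that whenever a subregion of $M$ can be replaced by one with the same boundary strand data and strictly fewer nonempty tiles, the original mosaic could not lie in $\mathscr M_L^C$; and (ii) \emph{unknot-freeness}, meaning that if a subregion's strands close up into an isolated simple closed curve with no crossings, the corresponding component of $L$ would be an unknotted, unlinked component, contradicting the hypothesis on $L$.

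For the four forbidden subarrays of Figure \ref{fig:canthappen}, I would treat each separately. In each case the dark-gray tiles already fix the strand pattern along several of the boundary corners of the depicted block; together with the planarity of the corner-mosaic diagram this constrains the implicit or light-gray positions to a short list of cases, each of which I expect to admit either a strictly smaller local replacement (contradicting minimality) or a pinched-off crossing-free loop (contradicting unknot-freeness). The replacements are the corner-mosaic analogues of the ``cap-pushing'' moves used in the proof of Theorem \ref{thm:main}, together with simple strand-straightening and Reidemeister-I-type deletions.

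The second bullet, that a $T_2$ must have an empty tile below it, reduces to the same two principles once the strand leaving the bottom corner(s) of $T_2$ is traced: for each of the ten nonempty choices the tile directly below could take, a short argument should exhibit either a reduction or a pinched-off trivial loop. The ``middle must be $T_5$'' claim of Figure \ref{fig:middleist5} is essentially a connectivity statement---the surrounding tiles prescribe exactly which corners of the middle tile do and do not carry strands---and I expect every tile other than $T_5$ either to violate that prescription outright or to produce one of the forbidden subarrays already ruled out in the first bullet.

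The last claim, about Figure \ref{fig:neednothappen}, differs in that the configuration is only avoidable, not forbidden. Here I would exhibit an explicit local rewrite that preserves both the link type and the nonempty tile count while removing an occurrence of the pattern, and argue termination by a monovariant such as the lexicographic position of the topmost leftmost occurrence. The main obstacle will be the sheer bookkeeping of the case analysis for Figure \ref{fig:canthappen}, together with the subtlety of making the various local moves mutually compatible, so that a single $M \in \mathscr M_L^C$ can be chosen simultaneously satisfying all four bullet points; in particular, the rewrite used for the fourth bullet must be checked not to reintroduce any of the forbidden subarrays already eliminated.
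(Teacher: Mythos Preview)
Your plan matches the paper's proof in spirit and in its two guiding principles (minimality and unknot-freeness), but the paper exploits a specific logical order that you should adopt: it first handles the third subarray of Figure~\ref{fig:canthappen} and in doing so proves the $T_2$-below-is-empty claim; it then uses that claim (via rotation) to establish the $T_5$ statement of Figure~\ref{fig:middleist5} by ruling out $T_3$ and $T_4$; and only then does it attack the fourth subarray of Figure~\ref{fig:canthappen}, where the $T_5$ statement is the key input that pins down the corner tile and reduces the remaining casework to two tiles, yielding the explicit replacement of Figure~\ref{fig:lemmaproofeff}. Treating the four forbidden subarrays as parallel independent case analyses, as your outline suggests, risks a much longer argument for the fourth one.

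One of your worries is unnecessary: the first three bullets hold for \emph{every} $M \in \mathscr M_L^C$, so the rewrite you perform for Figure~\ref{fig:neednothappen} cannot reintroduce any of the forbidden subarrays---as long as your rewrite preserves the link type and the nonempty tile count, the result is still in $\mathscr M_L^C$ and hence automatically satisfies the first three bullets. The paper's rewrite is simply to observe that the middle tile must be $T_2$ and push it down to a $T_4$ in the empty square below; your instinct to check termination is reasonable, though the paper does not spell out a monovariant.
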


\begin{proof}
    None of the 10 nonempty tiles can be in the middle tile of Figure \ref{fig:canthappen} subarray 1, so that subarray cannot occur.
    
    % non-empty is hyphenated for page wrap
    In Figure \ref{fig:canthappen} subarray 2, the middle tile can only be $T_2$. Consider any non-empty tile that could be below $T_2$. In every case, either the subarray can be reduced in tiles while representing the same link or the subarray contains an unlinked unknot. This argument also shows that every instance of $T_2$ has an empty tile below it.
    
    In the subarray of Figure \ref{fig:middleist5}, the middle tile can only be $T_3$, $T_4$, or $T_5$. But the tile to the right of a $T_3$ is empty and the tile above a $T_4$ is empty, so the only option left is $T_5$.
    
    In Figure \ref{fig:canthappen} subarray 3, the bottom-left-most nonempty tile can only be $T_5$, whence the tile above it is either $T_2$ or $T_6$. If it were $T_2$, then we could reduce the subarray in tiles while representing the same link, so it must be $T_6$. The same argument shows that the tile to the right of the bottom-left-most nonempty tile tile is $T_6$. At this point, the subarray looks like the left of Figure \ref{fig:lemmaproofineff}. However, we can now reduce it in tiles while representing the same link by using the subarray depicted on the right of Figure \ref{fig:lemmaproofineff} instead. Hence the original subarray cannot occur.
    \begin{figure}[h]
        \centering

        \begin{subfigure}{.21\textwidth}
            \centering
            \includegraphics[width=.9\textwidth]{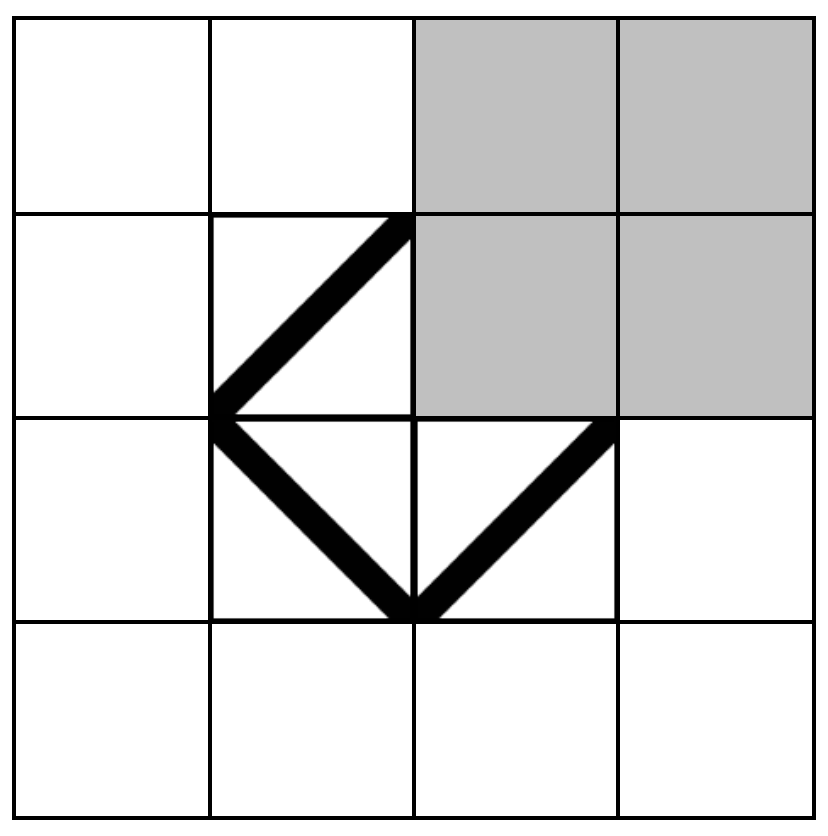}
        \end{subfigure}
        \hspace{.2em}
        \begin{subfigure}{.21\textwidth}
            \centering
            \includegraphics[width=.9\textwidth]{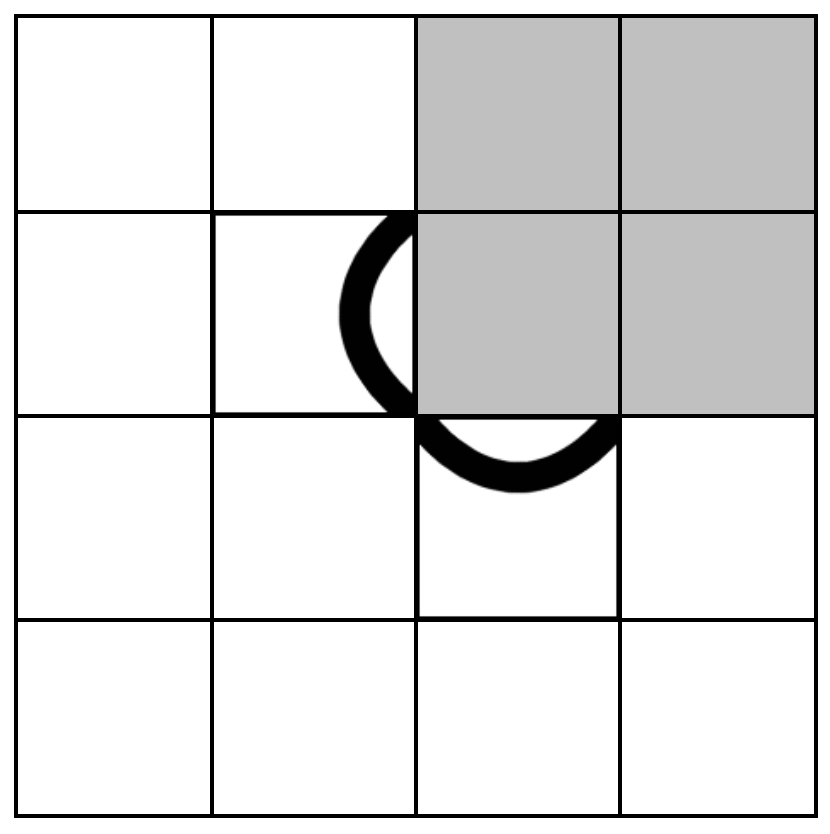}
        \end{subfigure}
        
        \caption{Figure \ref{fig:canthappen} subarray 4 must take the form on the left, which can be replaced with the subarray on the right without changing which link is being represented.}
        \label{fig:lemmaproofineff}
    \end{figure}

    In Figure \ref{fig:canthappen} subarray 4, the bottom-left-most nonempty tile is $T_5$, whence the tile to the right of it can only be $T_6$, from which we see that the tile above the $T_5$ can only be $T_1$ or $T_6$. But now this subarray can be reduced in tiles while representing the same link.
    
    For the last sentence, start by picking any $M \in \mathscr M_C^L$. When Figure \ref{fig:neednothappen} subarray 1 occurs in $M$, we know that the bottom-middle tile is empty by Figure \ref{fig:canthappen} subarray 2. Further, the nonempty tile in the middle row can can only be $T_2$, so we can push it down to a $T_4$ in the bottom-middle tile, making the middle tile empty. Applying this process to all occurrences of the subarray in $M$ mod rotation and reflection yields a new mosaic $M'$ that is still in $\mathscr M_C^L$ but has no instances of Figure \ref{fig:neednothappen} subarray 1. Now, in each instance of Figure \ref{fig:neednothappen} subarray 2, the tiles in the middle row are either $T_5$ and $T_6$ respectively or they are both $T_2$. In either case, both tiles can be pushed down to two $T_4$'s in the bottom row. Applying this to all occurrences of the subarray in $M'$ mod rotation and reflection yields $M'' \in \mathscr M_C^L$ with no instances of either subarray of Figure \ref{fig:neednothappen}.
\end{proof}

The proof of the classification uses polyominoes. For our purpose, a polyomino with no adjectives is a choice of finitely many tiles in the plane, i.e. a finite subset of $\mathbb Z^2$, mod rotation, reflection, and translation. This is unusual; polyominoes are usually required to be edge-connected, which means that the graph with a vertex for each tile and an edge for each pair of tiles sharing an edge is connected. This condition will not be useful for us. Instead, we say a polyomino is corner-connected if the graph with a vertex for each tile and an edge for each pair of tiles sharing an edge \emph{or corner} is connected.

Given a corner mosaic, one can forget all information other than which tiles are nonempty, yielding a (not necessarily corner-connected) polyomino. We call this process polyominoification. A corner mosaic is called corner-connected if its polyominoification is. A corner-connected polyomino is said to be in compliance with Lemma \ref{lmm:submosaics} if it does not contain any subarrays that Lemma \ref{lmm:submosaics} either outlawed or said need not occur. The key idea in the proof of the classification theorem is that in order to enumerate all $n$-tile corner mosaics, it is helpful to compute the image of
$$\{ M \in \mathscr M_C^L \text{ for some unknot free } L \text{ with } t_C(L) = n \}$$
under polyominoification. We will not compute the image exactly, but we will compute a larger set: the corner-connected $n$-tile polyominoes in compliance with Lemma \ref{lmm:submosaics}. This is the focus of the next Lemma.

A domino is a 2-tile polyomino. There is a unique edge-connected domino, but there are two corner-connected dominoes.

\begin{lemma} \label{lmm:algorithm}
    Start with a singleton $\mathscr P_3$ containing the unique edge-connected domino. Repeat the following as $i$ ranges from $3$ to $n$: for each $P \in \mathscr P_{i-1}$, for each tile $T$ in $P$, for each of the 8 tiles $U$ in the plane sharing an edge or corner with $T$, if $U$ is not already in $P$, add it to $P$ to get a corner-connected polyomino $P'$. If $\mathscr P_i$ does not yet contain $P'$, add $P'$ to $\mathscr P_i$. When this loop is finished, filter the corner-connected polyominoes which are not in compliance with Lemma \ref{lmm:submosaics} out of $\mathscr P_n$. The resulting set is exactly the corner-connected $n$-tile polyominoes in compliance with Lemma \ref{lmm:submosaics}.
\end{lemma}

\begin{proof}
    Assume that $\mathscr P_i$ contains all of the corner-connected $i$-tile polyominoes. We claim $\mathscr P_{i+1}$ contains all of the corner-connected $(i+1)$-tile polyominoes. If this were not so, there would be a corner-connected $(i+1)$-tile polyomino with the property that removing any tile makes it no longer corner-connected. Then the graph with a vertex for each tile and an edge for each pair of tiles sharing an edge or corner is a connected graph with more than one vertex with the property that removing any vertex makes it no longer connected. But no such graph exists.

    The studious reader will rightfully object that $\mathscr P_2$ does \emph{not} contain all of the corner-connected dominoes. This is okay because the only extra polyominoes enumerated from including the non-edge-connected domino in $\mathscr P_2$ contain no copies of the edge-connected domino in them, which makes them not in compliance with Lemma \ref{lmm:submosaics}.
\end{proof}

Polyominoes as we have defined them are quotients, and to minimize waiting one must implement the algorithm in a way that works with quotients in an efficient manner. An optimized Python script is available on the author's website.

\subsection*{The Classification}

\begin{theorem}
    If $L$ has no unlinked, unknotted components and $t_C(L) < 12$, then exactly one holds:
    \begin{itemize}
        \item $t_C(L) = 6$ and $L$ is the Hopf link $P(1,1)$,
        \item $t_C(L) = 8$ and $L$ is the trefoil knot $3_1$ or Solomon's knot $P(1,1,1,1)$,
        \item $t_C(L) = 10$ and $L$ is the connect sum of two Hopf links $P(1,1) \# P(1,1)$, the cinquefoil knot $5_1$, or the star of David link $P(1,1,1,1,1,1)$,
        \item $t_C(L) = 11$ and $L$ is the figure-eight knot $4_1$ or the three-twist knot $5_2$.
    \end{itemize}
    Table \ref{tbl:examples} shows explicit examples of mosaics representing the tile number for each of these links.
    \label{thm:classification}
\end{theorem}

\begin{proof}
    It is easy to see that $t_C(L)$ cannot be 1, 2, or 3. We can therefore be sure that for $t_C(L) \leq 7$, all mosaics in $\mathscr M_C^L$ are corner-connected, for otherwise we could split the mosaic into two corner-connected components, each representing links with no unlinked, unknotted components. Running the algorithm of Lemma \ref{lmm:algorithm} revealed that there are no corner-connected polynominoes with 4 or 5 tiles in compliance with Lemma \ref{lmm:submosaics}. Therefore, this argument can be extended to say that if $t_C(L) < 12$, all mosaics in $\mathscr M_C^L$ are corner-connected. (The argument can be pushed no further, as the conclusion is false for $t_C(L) = 12$: consider $L$ two unlinked Hopf links.)
    
    The algorithm of Lemma \ref{lmm:algorithm} found that the corner-connected polynominoes with 6, 7, 8, 9, 10, or 11 tiles in compliance with Lemma \ref{lmm:submosaics} are exactly the 35 depicted in Table \ref{tbl:polyominolist}. We want to know which of them are polyominoifications of mosaics in $\mathscr M_C^L$.
    
    \begin{table}
        \centering
        \begin{tabular}{cM{18.5mm}M{18.5mm}M{18.5mm}M{18.5mm}M{18.5mm}}
        & a & b & c & d & e\\
            1
            & \includegraphics[width=.15\textwidth]{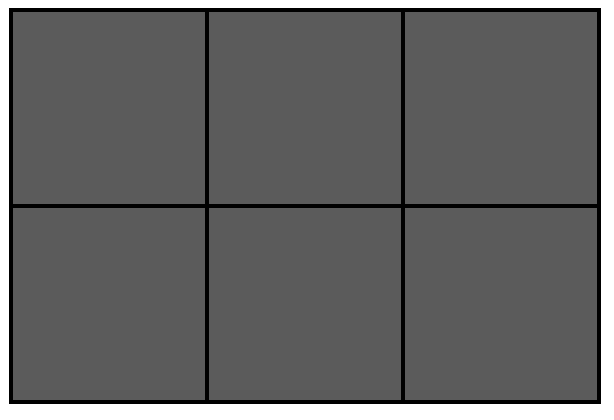}
            & \includegraphics[width=.15\textwidth]{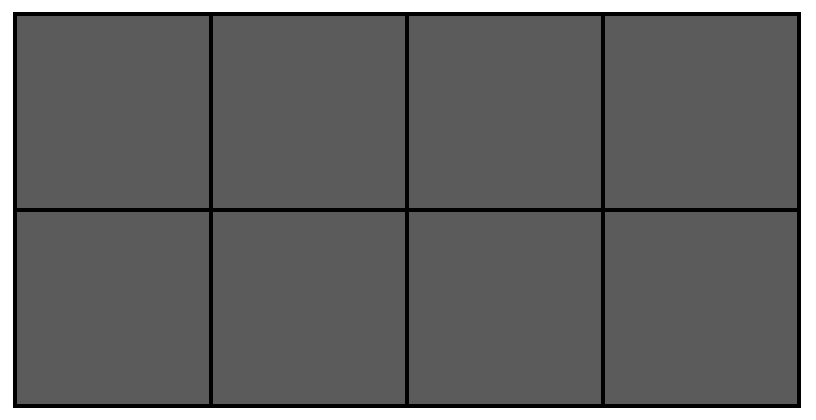}
            & \includegraphics[width=.15\textwidth]{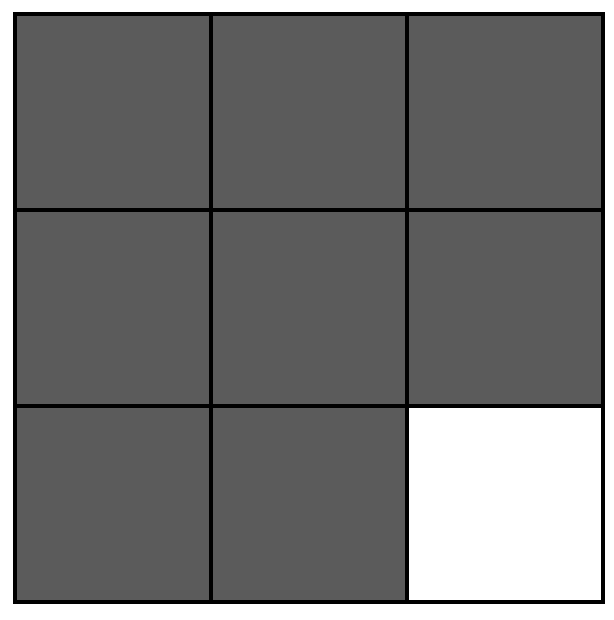}
            & \includegraphics[width=.15\textwidth]{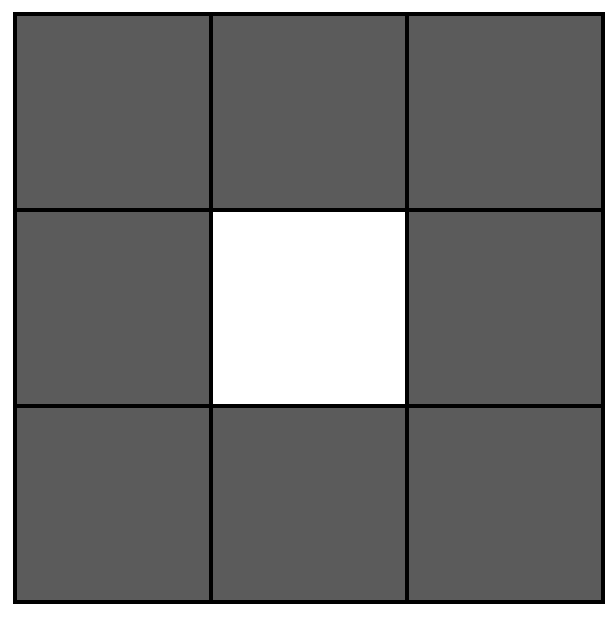}
            & \includegraphics[width=.15\textwidth]{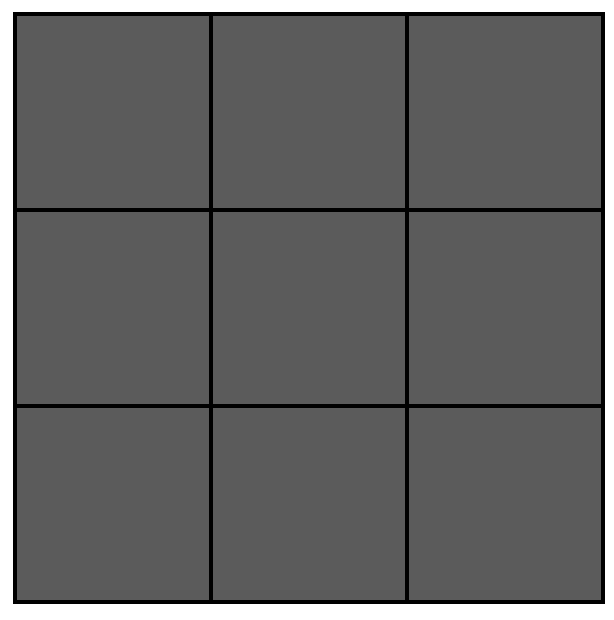} \\
            2
            & \includegraphics[width=.15\textwidth]{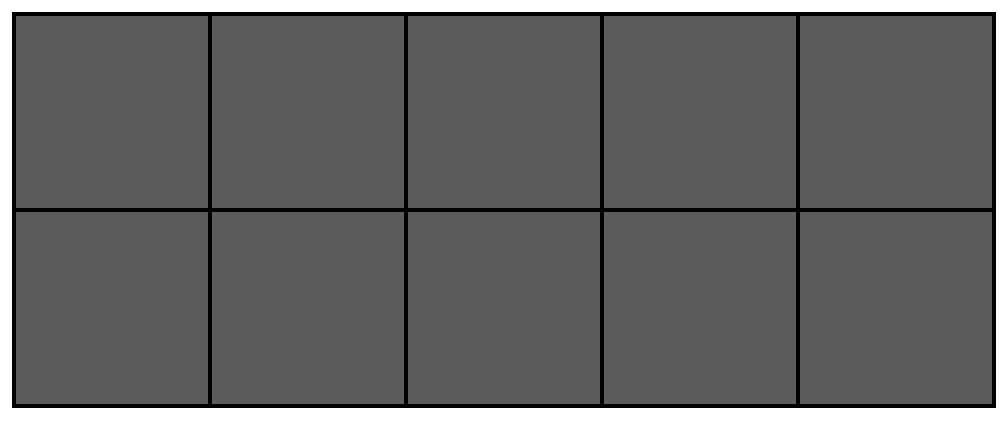}
            & \includegraphics[width=.15\textwidth]{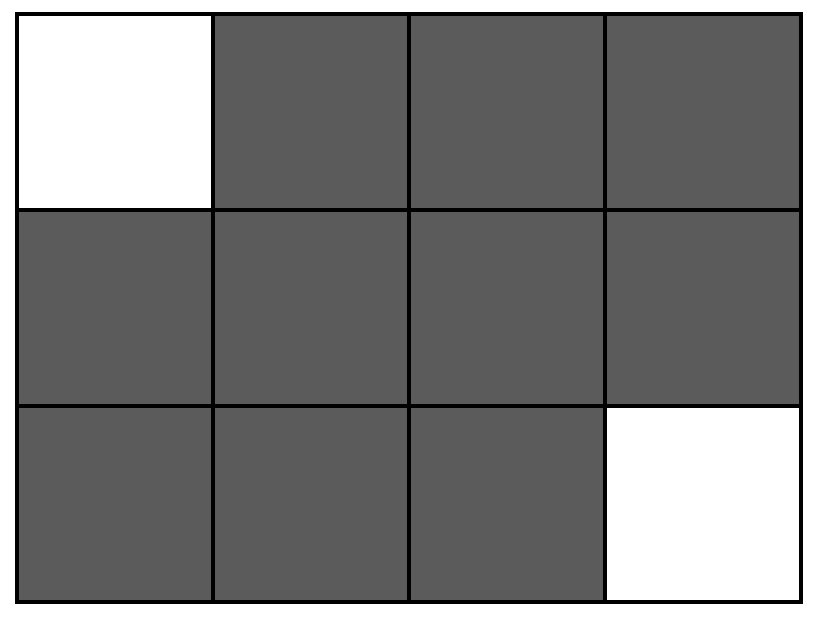}
            & \includegraphics[width=.15\textwidth]{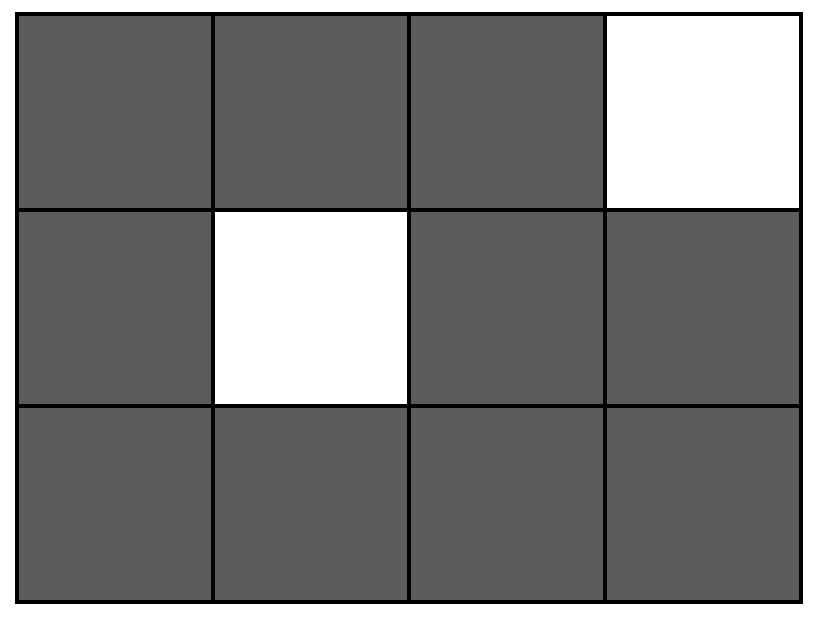}
            & \includegraphics[width=.15\textwidth]{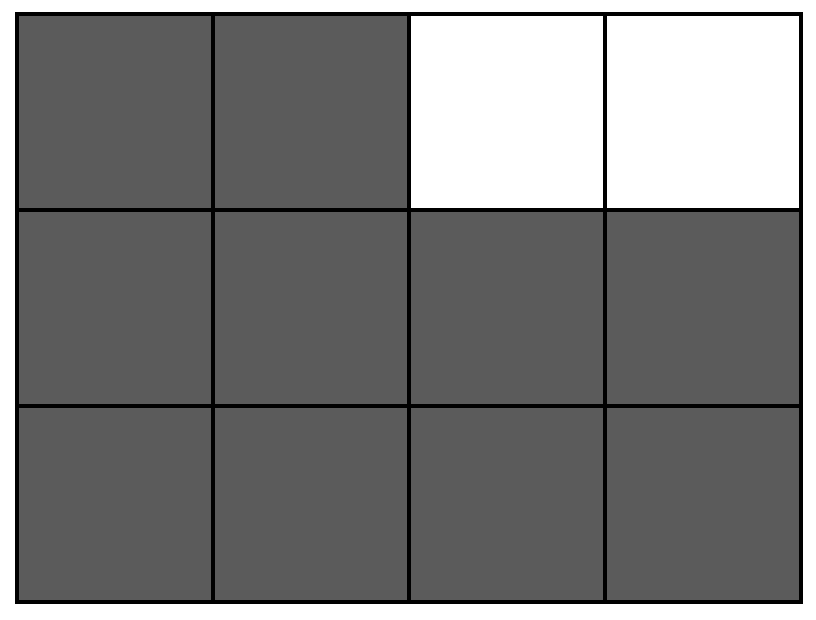}
            & \includegraphics[width=.15\textwidth]{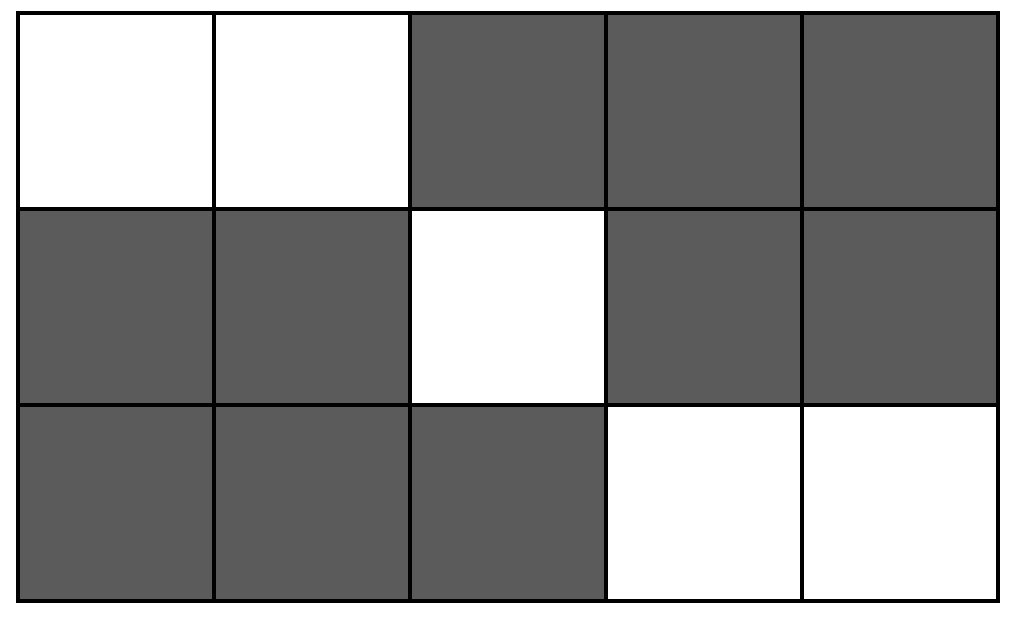} \\
            3
            & \includegraphics[width=.15\textwidth]{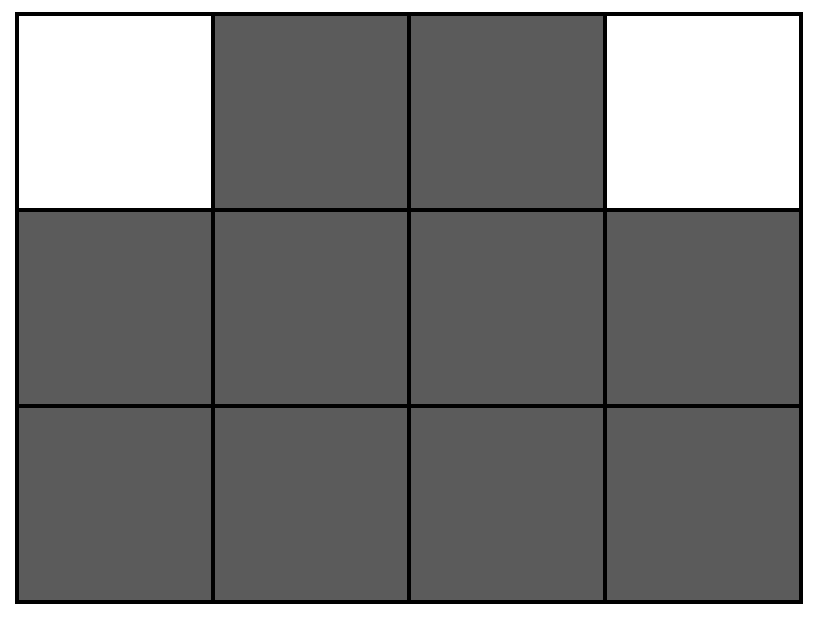}
            & \includegraphics[width=.15\textwidth]{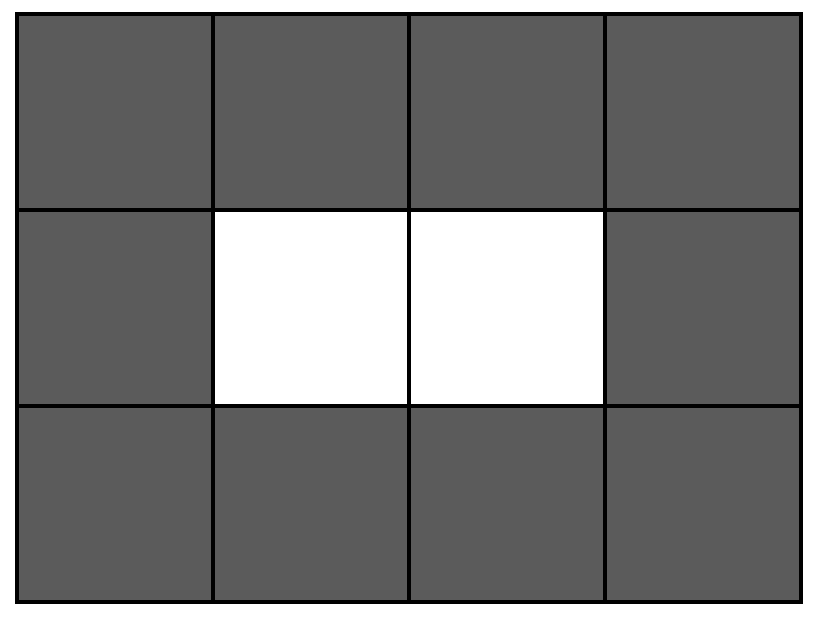}
            & \includegraphics[width=.15\textwidth]{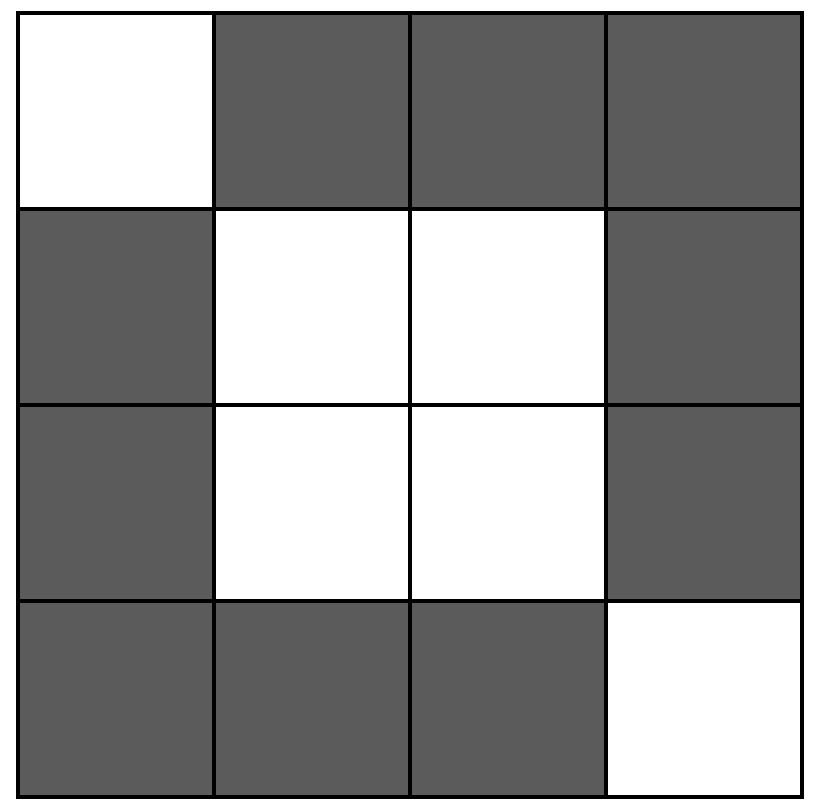}
            & \includegraphics[width=.15\textwidth]{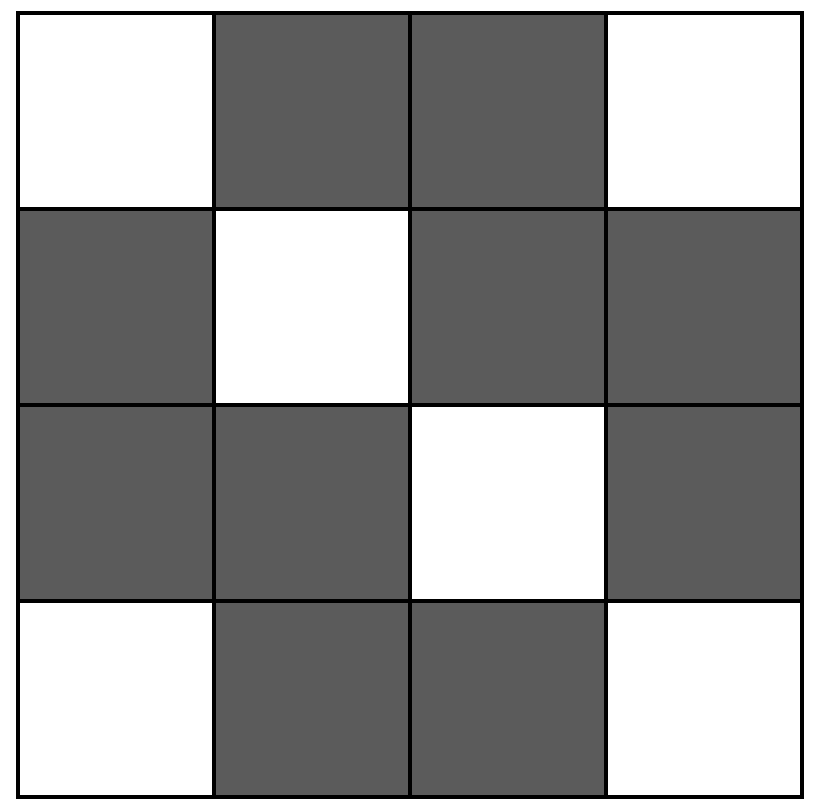}
            & \includegraphics[width=.15\textwidth]{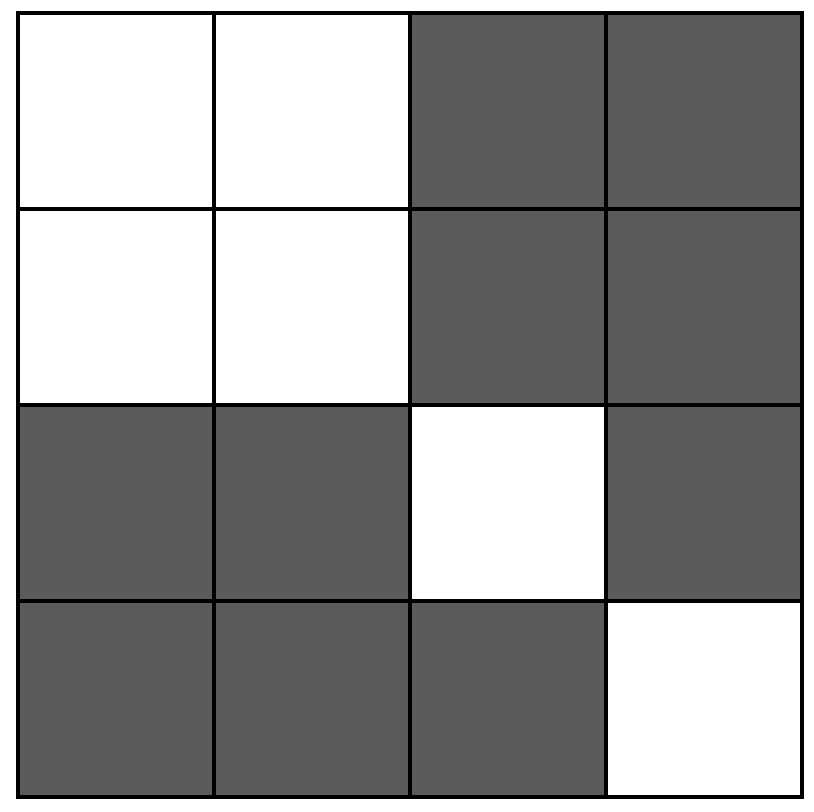} \\
            4
            & \includegraphics[width=.15\textwidth]{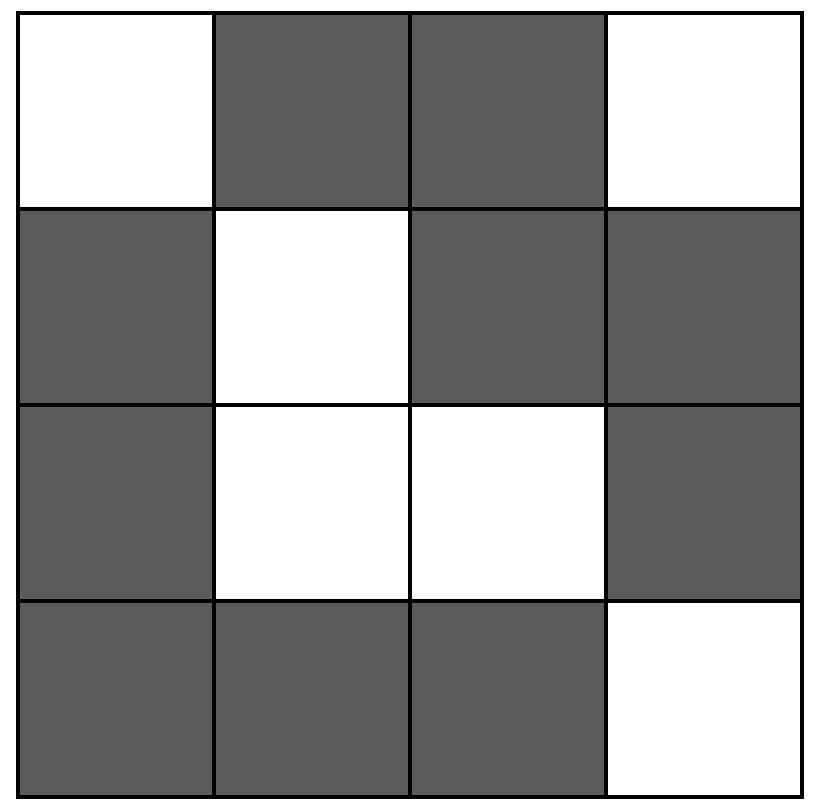}
            & \includegraphics[width=.15\textwidth]{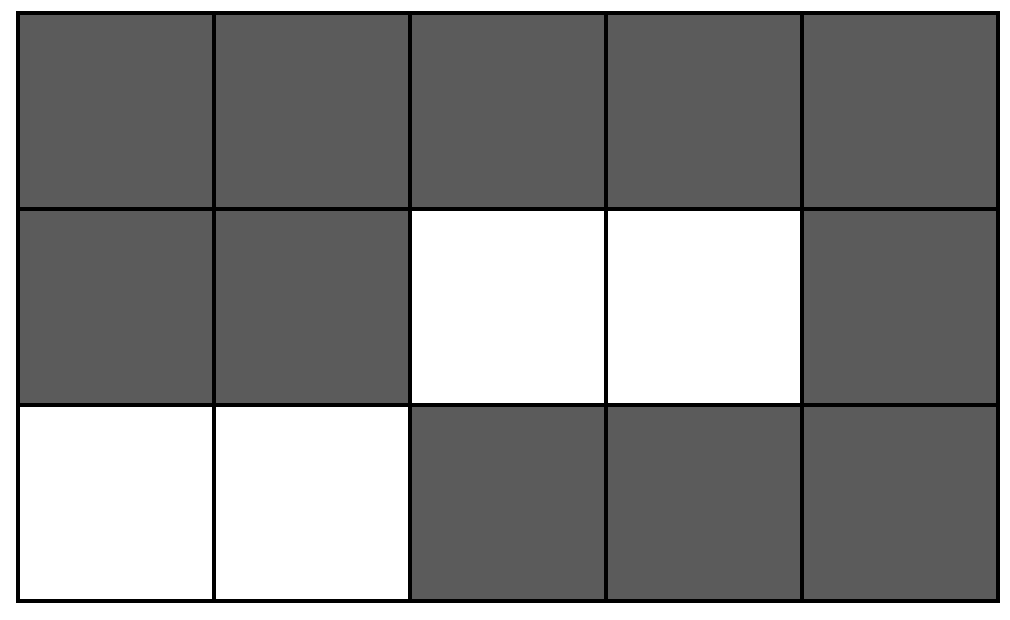}
            & \includegraphics[width=.15\textwidth]{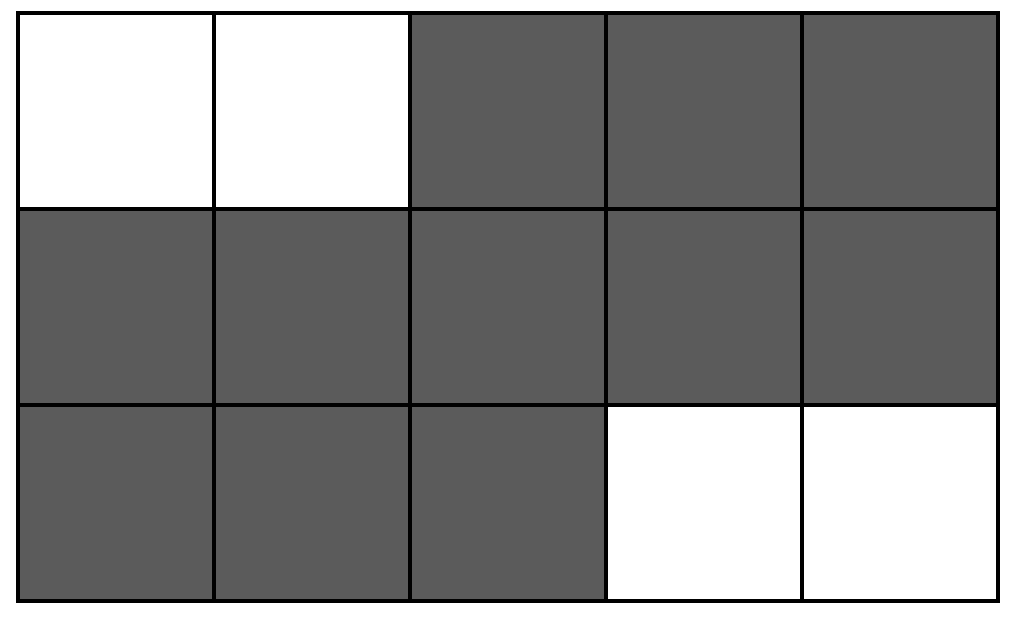}
            & \includegraphics[width=.15\textwidth]{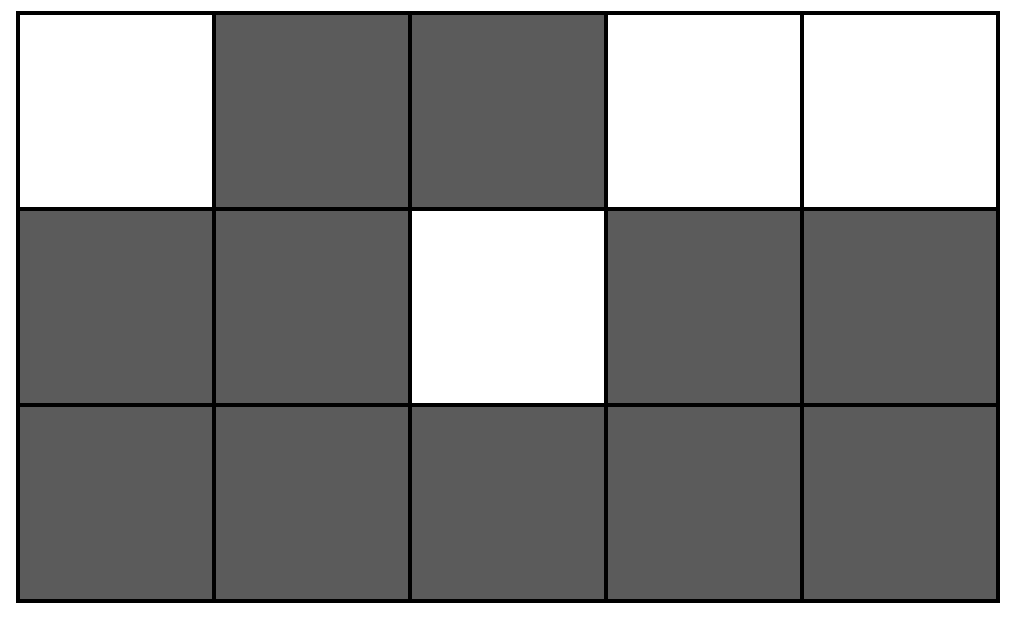}
            & \includegraphics[width=.15\textwidth]{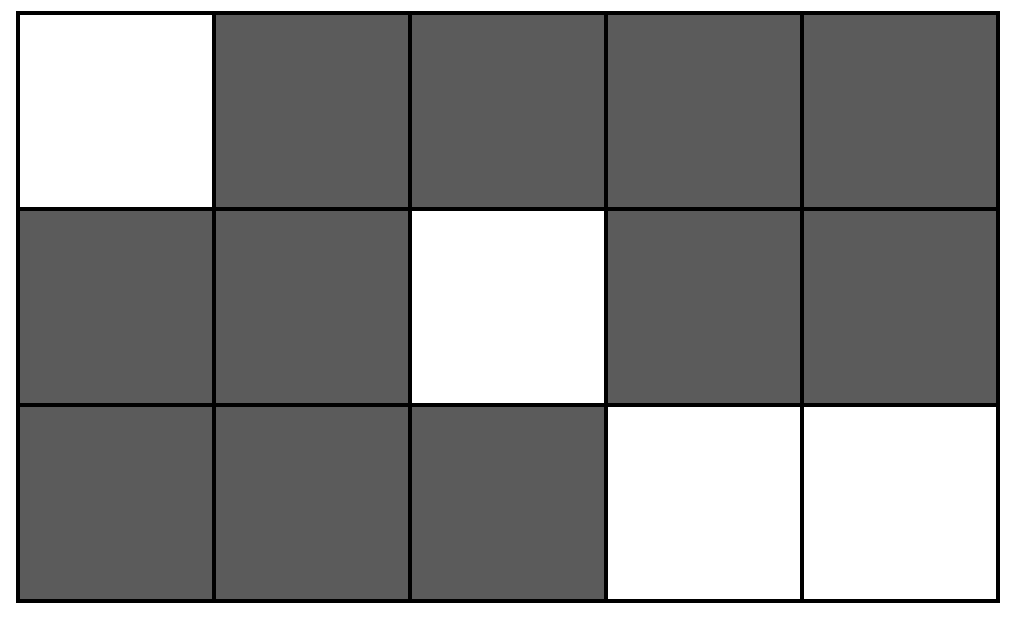} \\
            5
            & \includegraphics[width=.15\textwidth]{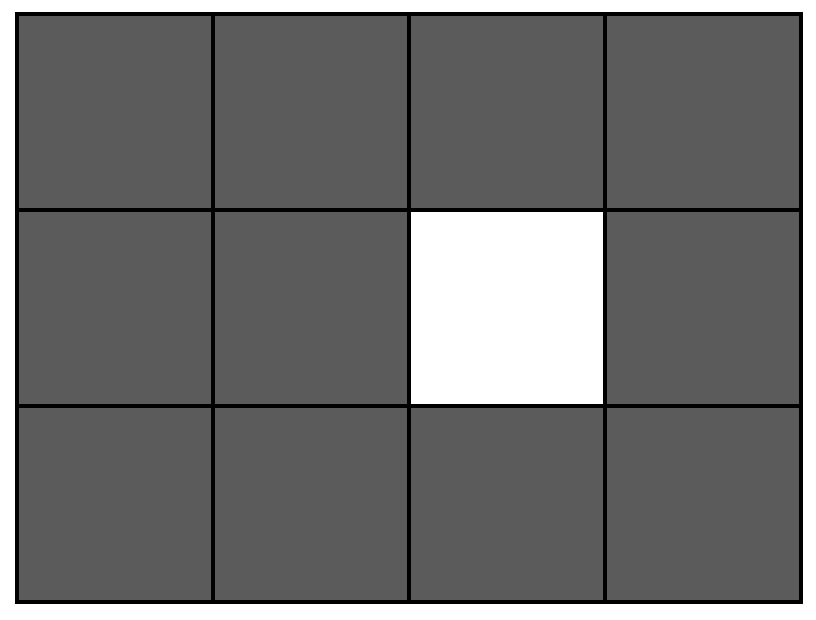}
            & \includegraphics[width=.15\textwidth]{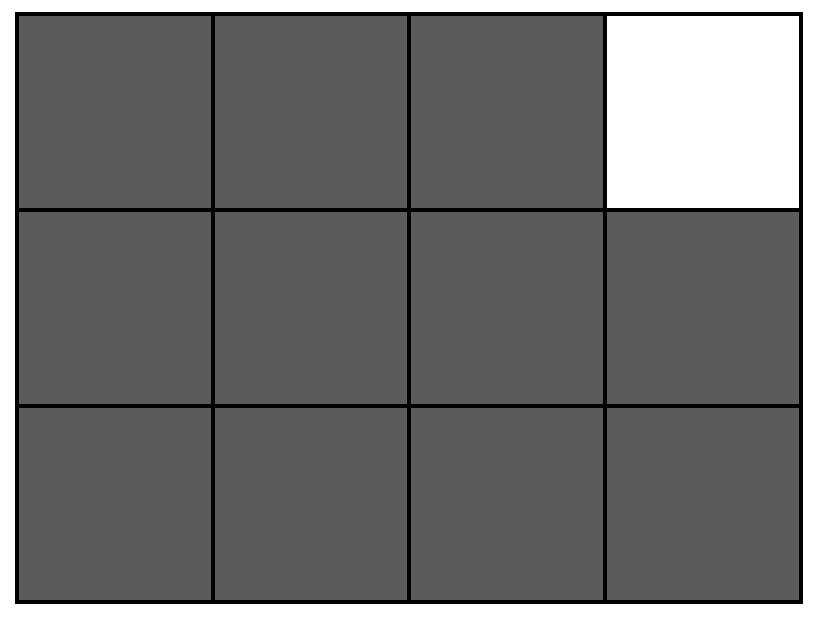}
            & \includegraphics[width=.15\textwidth]{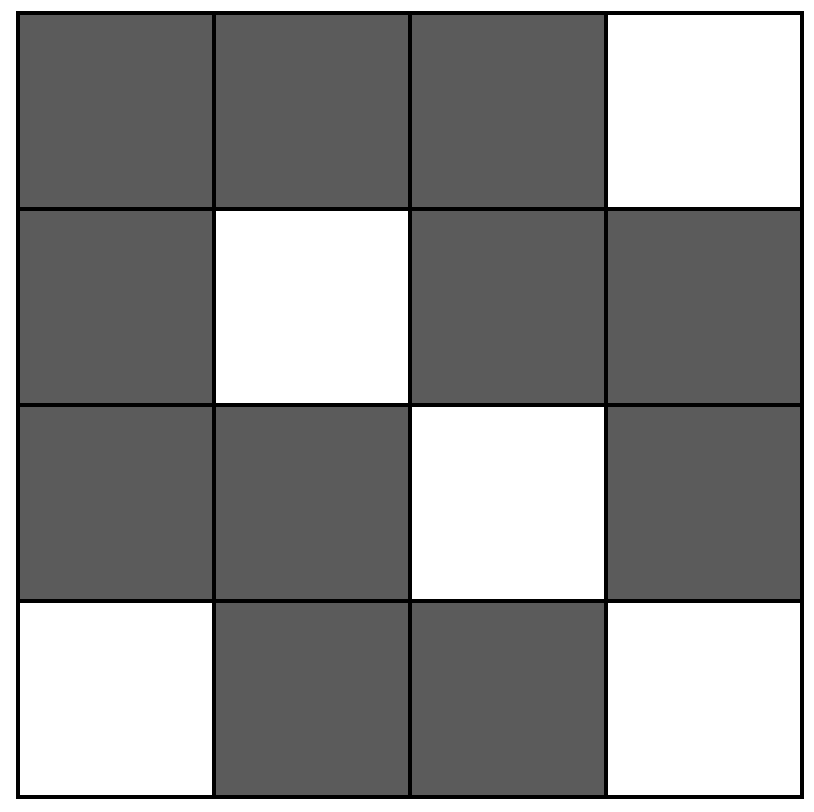}
            & \includegraphics[width=.15\textwidth]{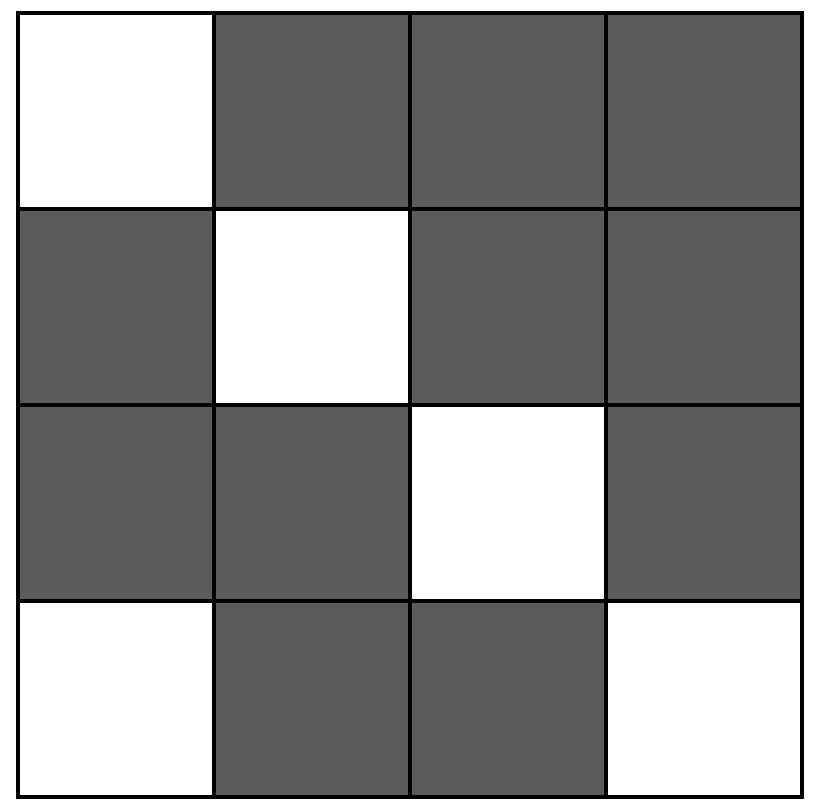}
            & \includegraphics[width=.15\textwidth]{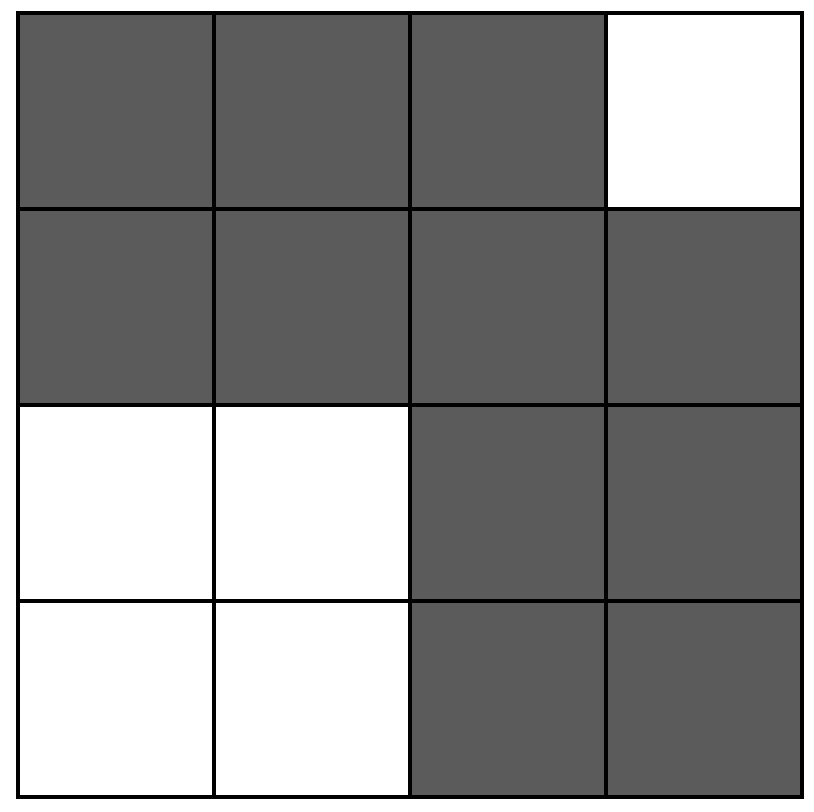} \\
            6
            & \includegraphics[width=.15\textwidth]{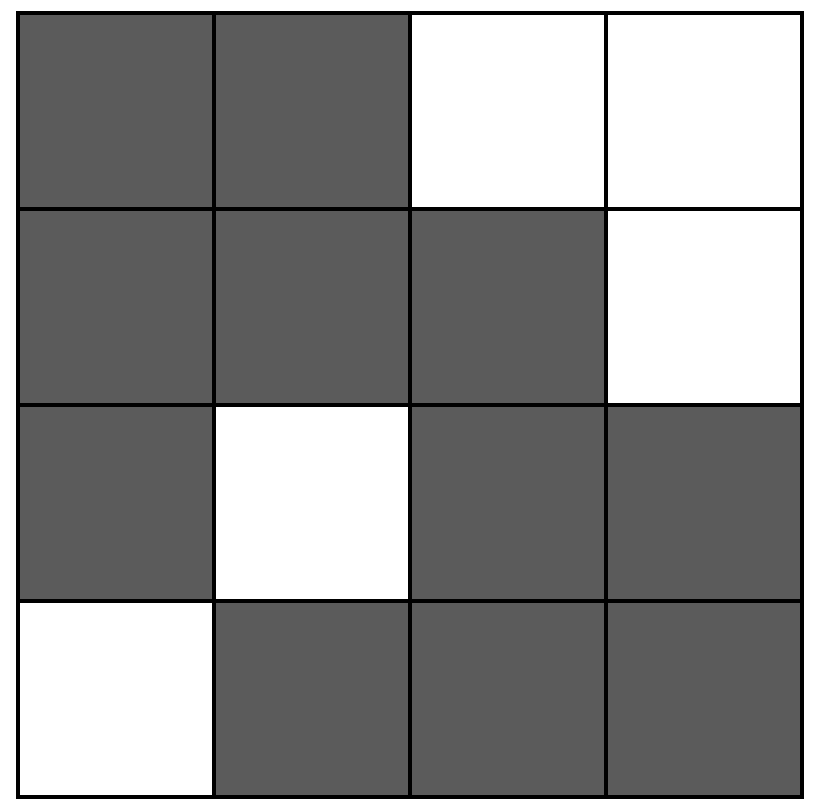}
            & \includegraphics[width=.15\textwidth]{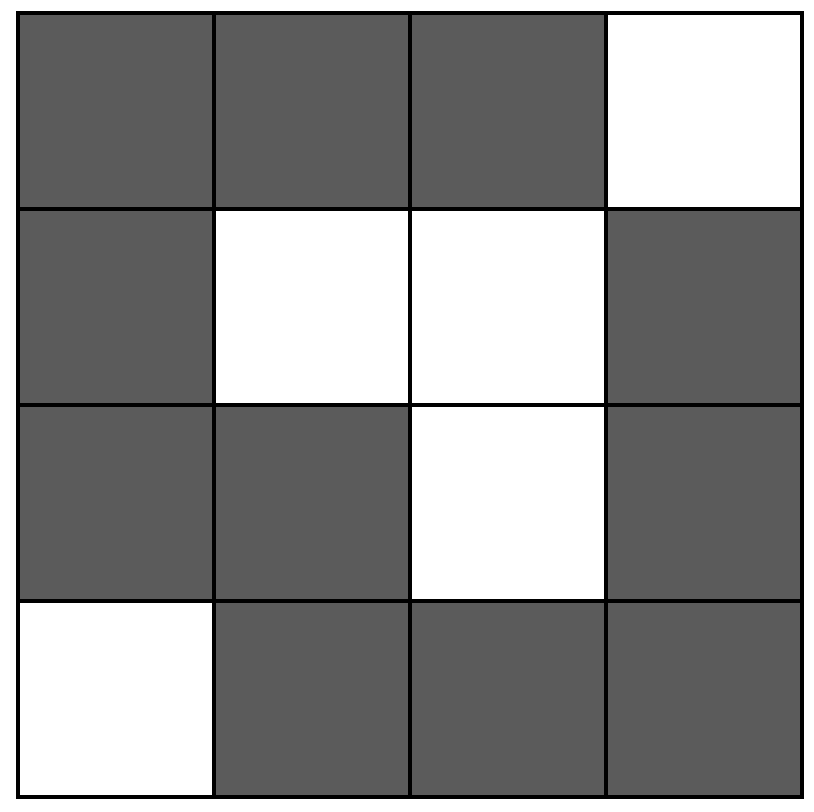}
            & \includegraphics[width=.15\textwidth]{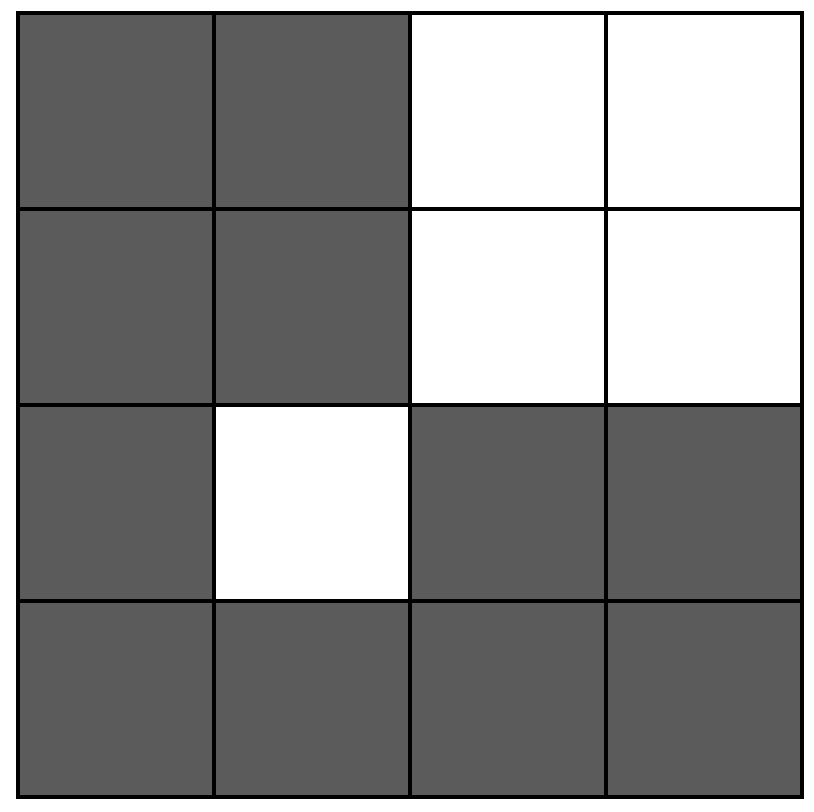}
            & \includegraphics[width=.15\textwidth]{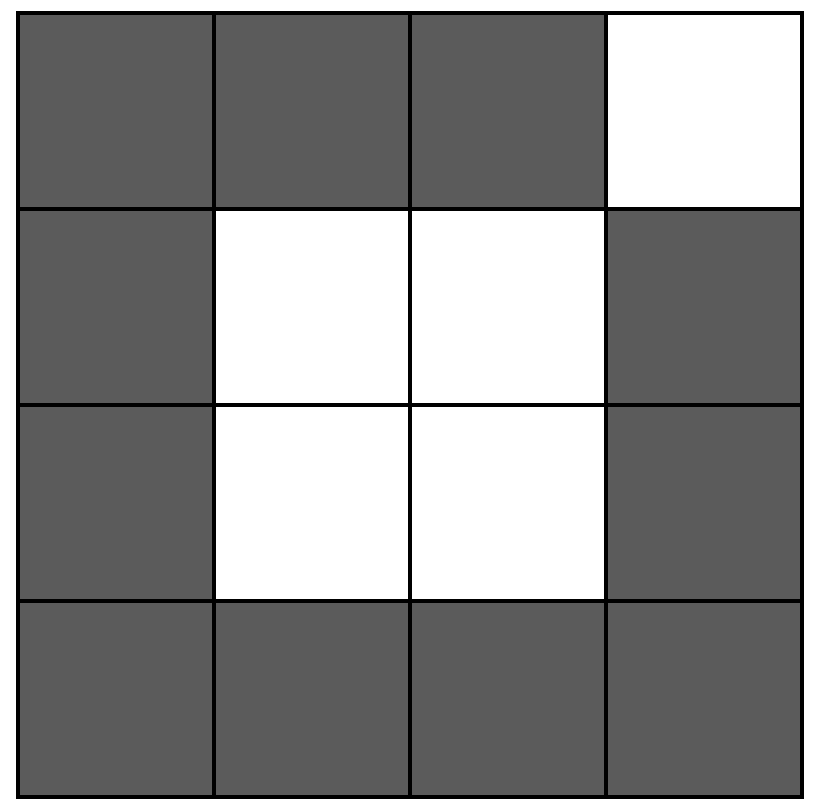}
            & \includegraphics[width=.15\textwidth]{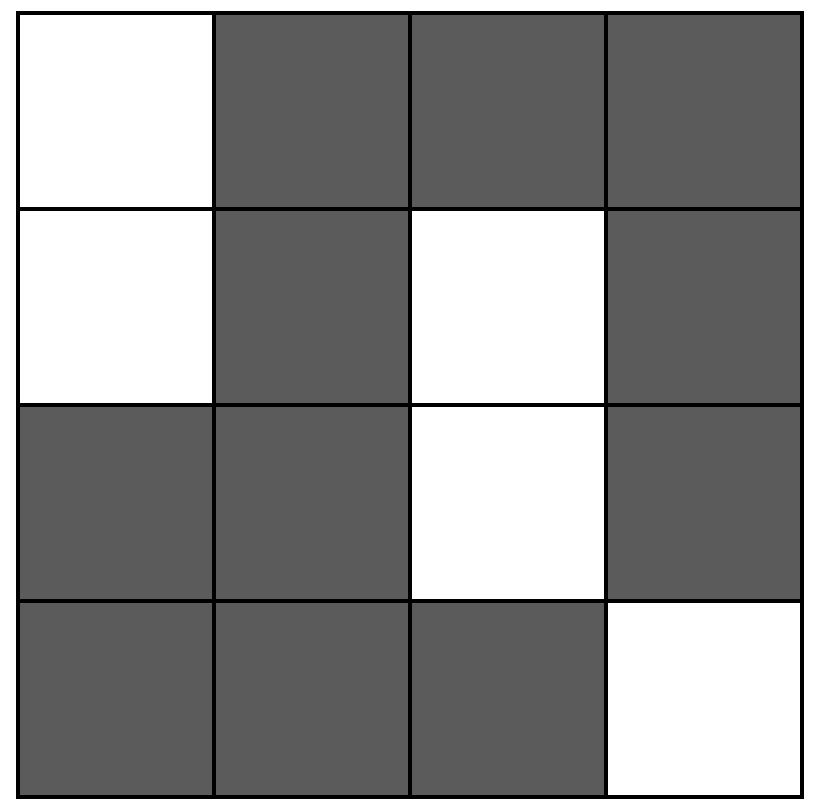} \\
            7
            & \includegraphics[width=.15\textwidth]{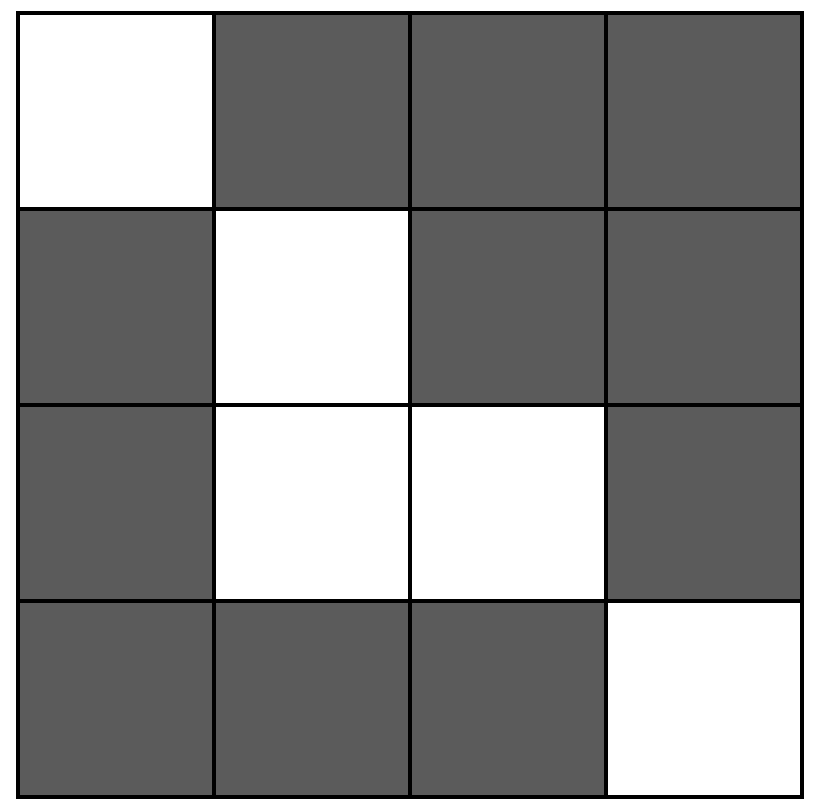}
            & \includegraphics[width=.15\textwidth]{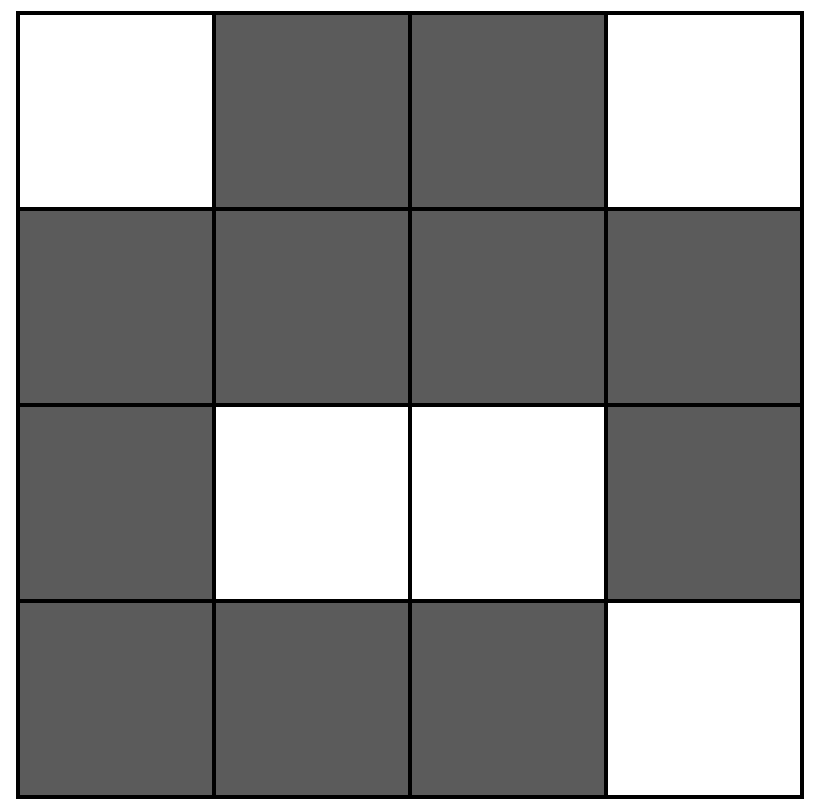}
            & \includegraphics[width=.15\textwidth]{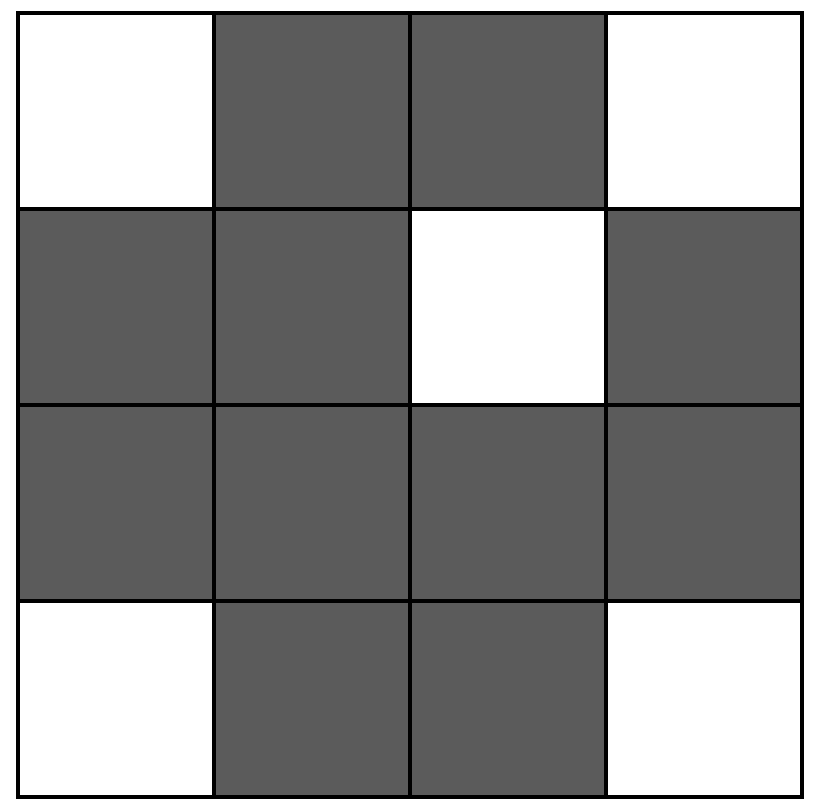}
            & \includegraphics[width=.15\textwidth]{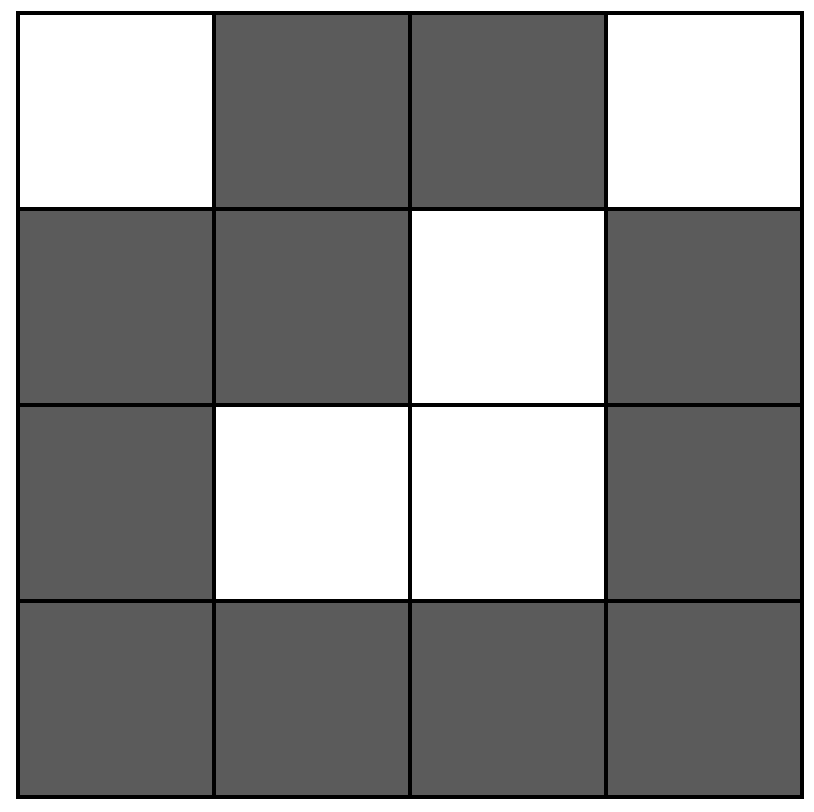}
            & \includegraphics[width=.15\textwidth]{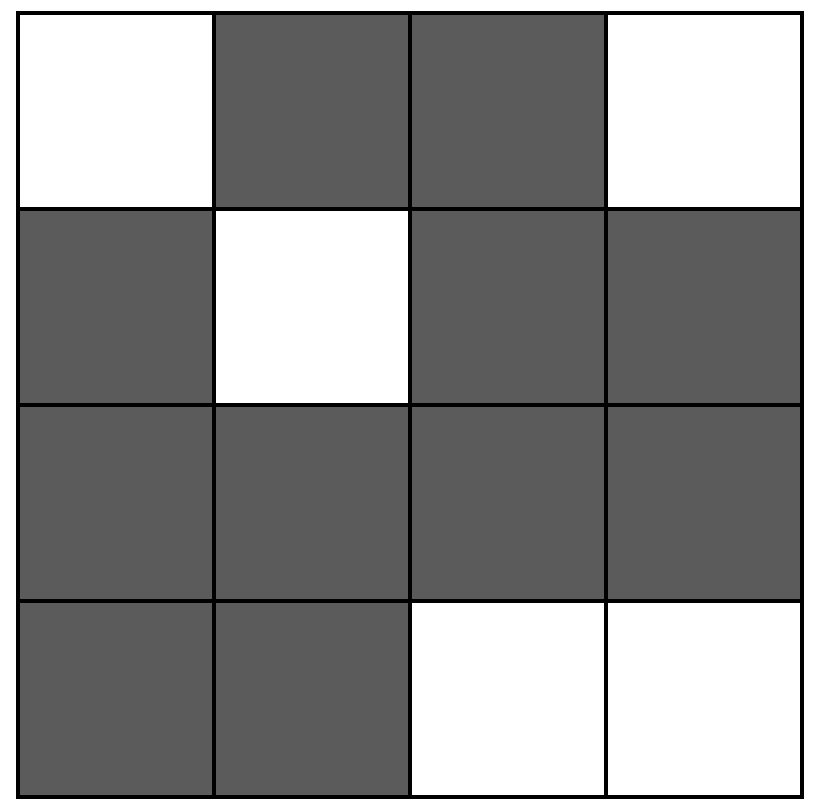} \\
        \end{tabular}
        \caption{An exhaustive list of all corner-connected polyominoes with 6, 7, 8, 9, 10, or 11 tiles in compliance with Lemma \ref{lmm:submosaics}.}
        \label{tbl:polyominolist}
    \end{table}

    The rest of the proof will combinatorially attack each polyomino in Table \ref{tbl:polyominolist} by using casework to fill in tiles until either a contradiction is reached or a mosaic in $\mathscr M_C^L$ has been constructed whose polyominoificiation is the polyomino. We explain how to carry out the computation but omit many details. Applying Figure \ref{fig:middleist5} to the entries of Table \ref{tbl:polyominolist} repeatedly yields Table \ref{tbl:polyominolistcorners}.
    
    \begin{table}
        \centering
        \begin{tabular}{cM{18.5mm}M{18.5mm}M{18.5mm}M{18.5mm}M{18.5mm}}
        & a & b & c & d & e\\
            1
            & \includegraphics[width=.15\textwidth]{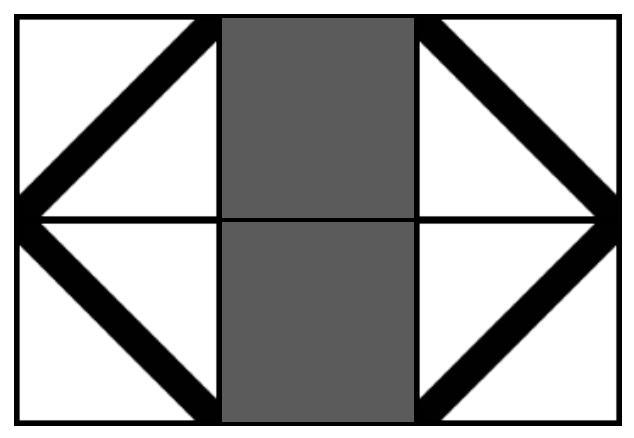}
            & \includegraphics[width=.15\textwidth]{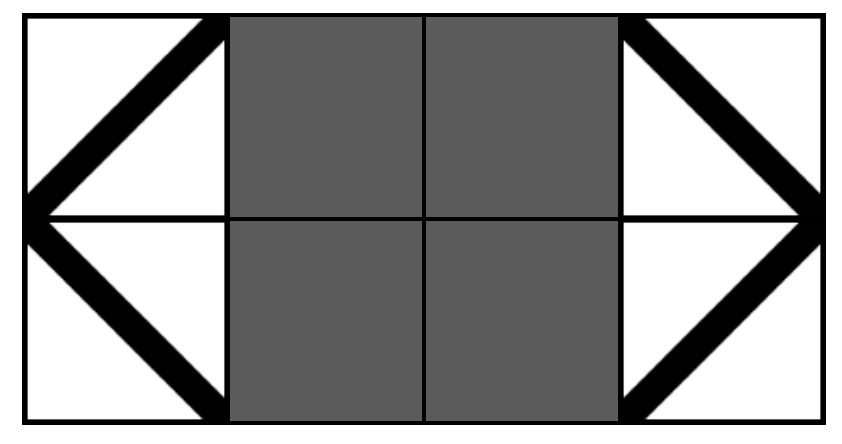}
            & \includegraphics[width=.15\textwidth]{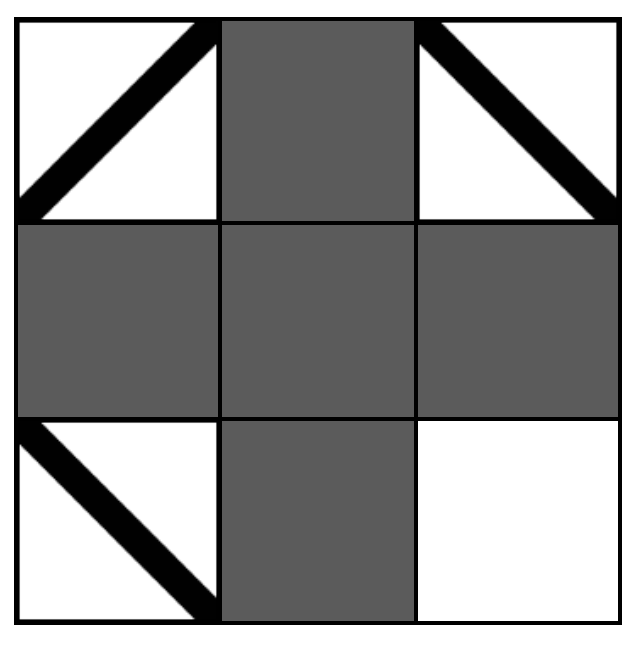}
            & \includegraphics[width=.15\textwidth]{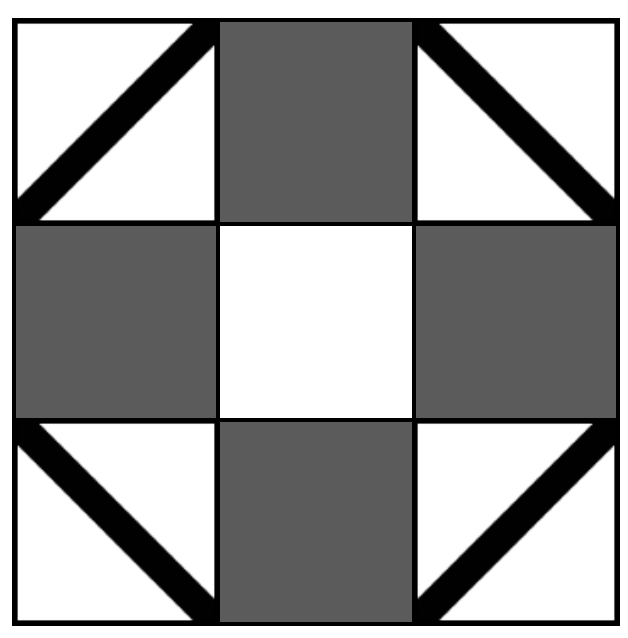}
            & \includegraphics[width=.15\textwidth]{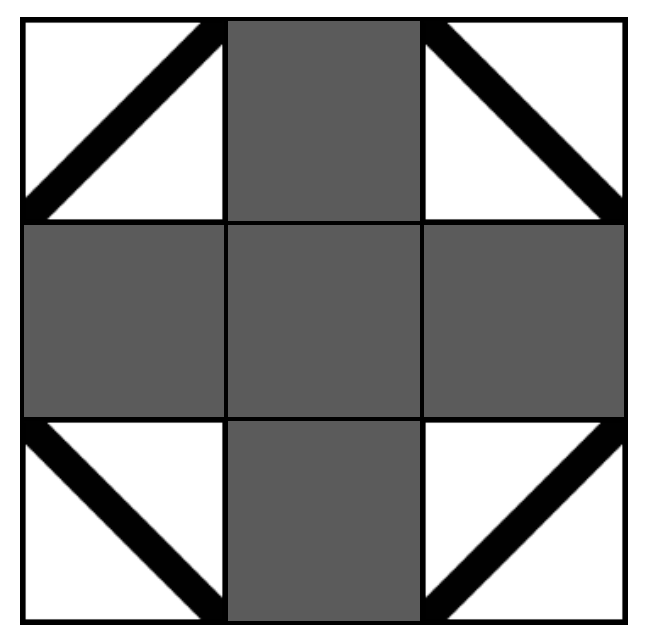} \\
            2
            & \includegraphics[width=.15\textwidth]{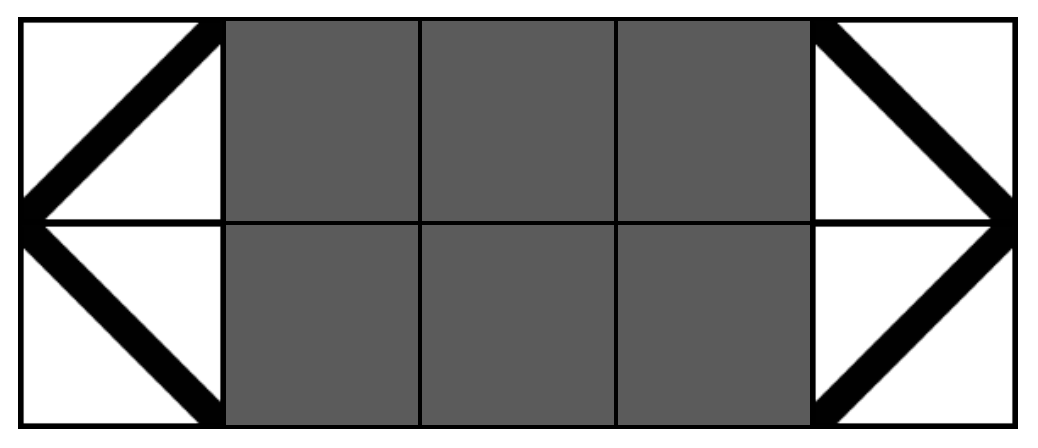}
            & \includegraphics[width=.15\textwidth]{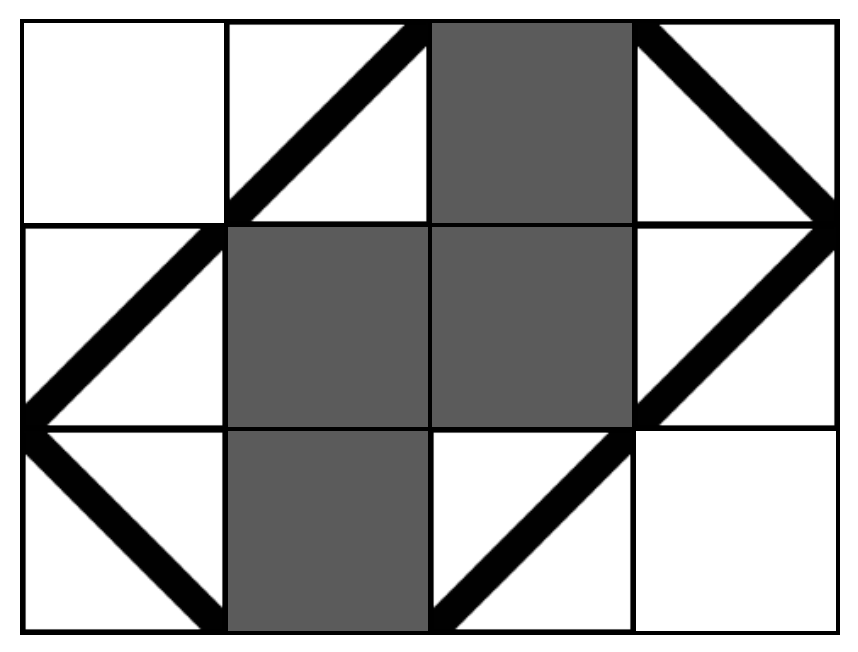}
            & \includegraphics[width=.15\textwidth]{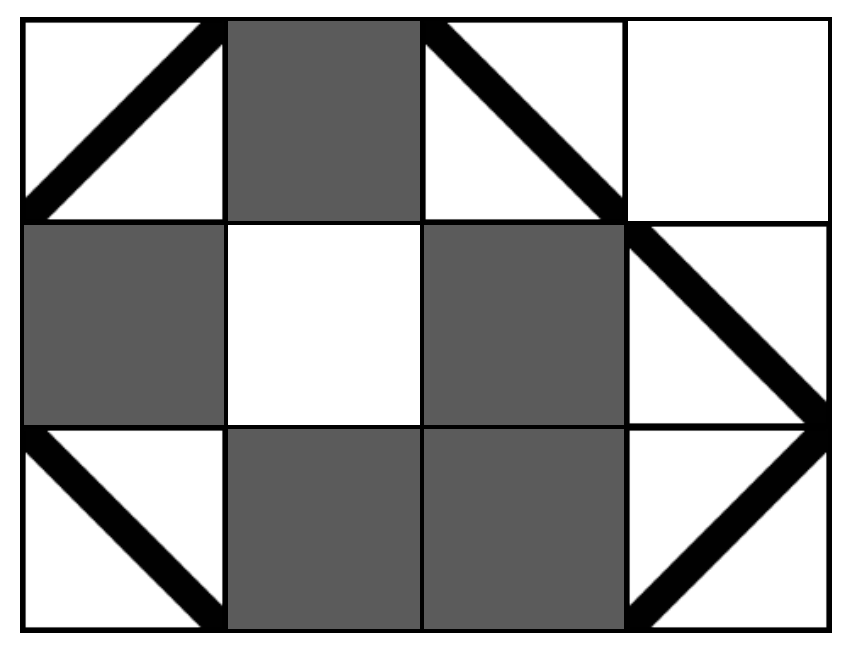}
            & \includegraphics[width=.15\textwidth]{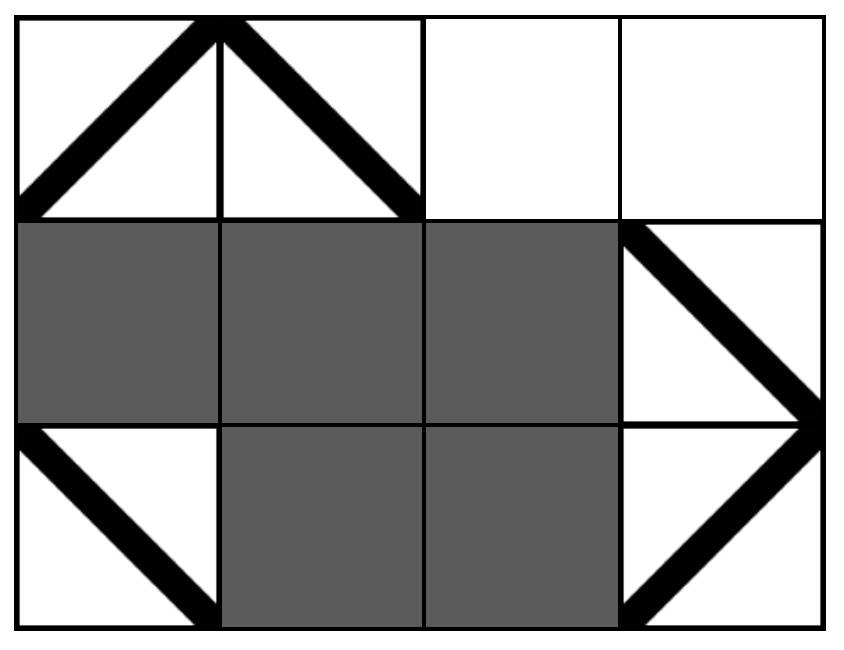}
            & \includegraphics[width=.15\textwidth]{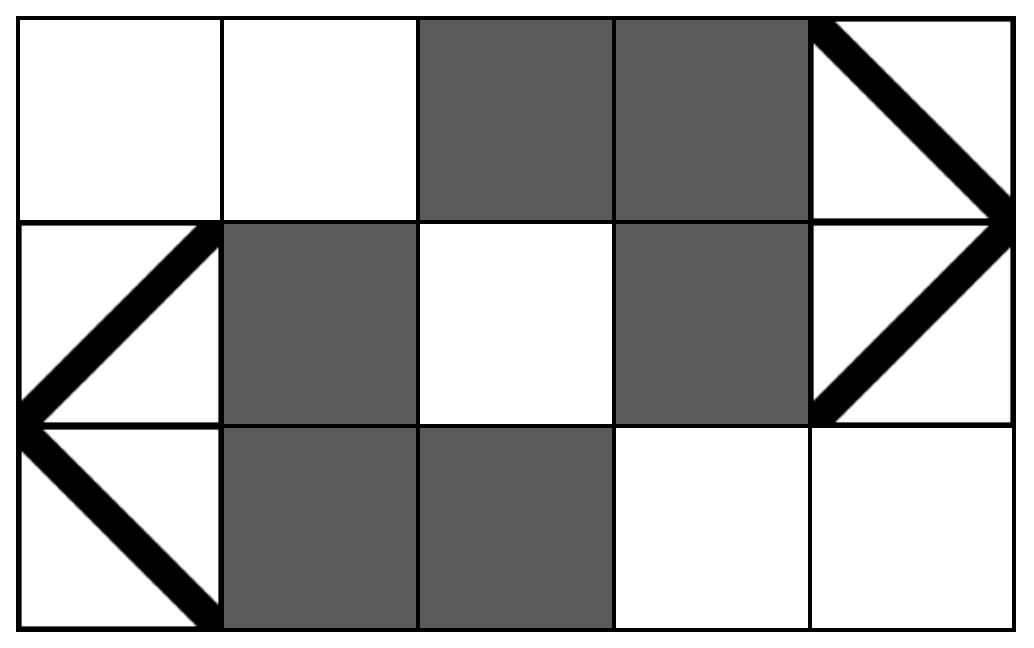} \\
            3
            & \includegraphics[width=.15\textwidth]{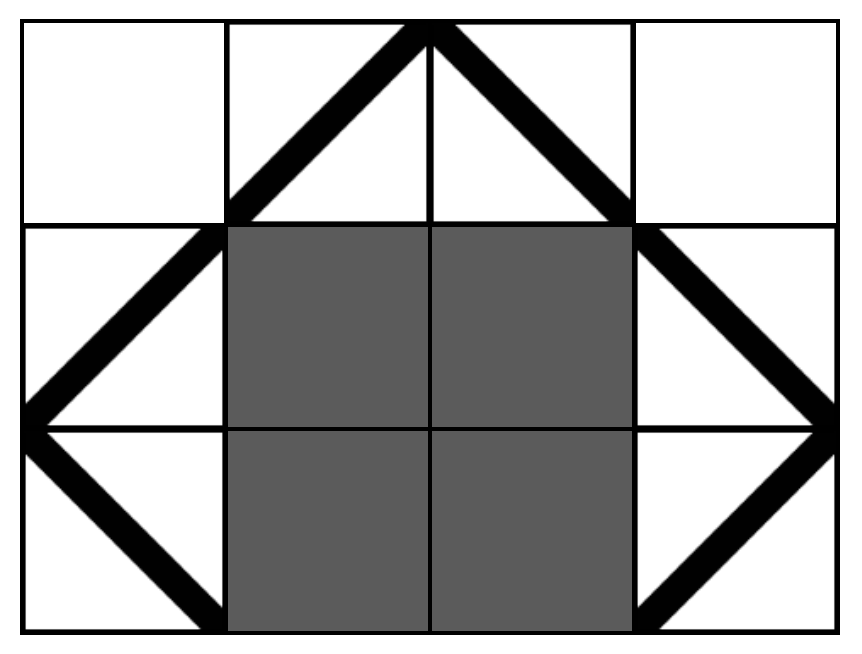}
            & \includegraphics[width=.15\textwidth]{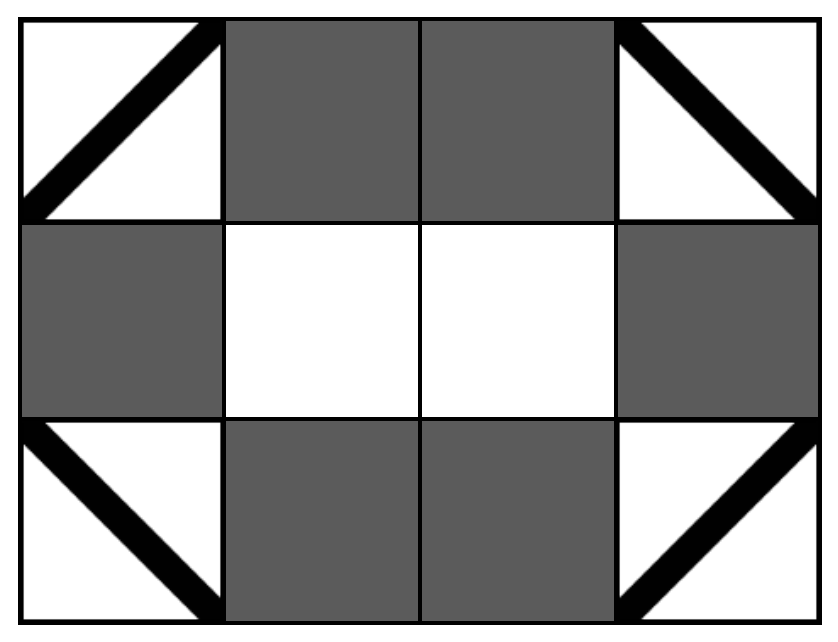}
            & \includegraphics[width=.15\textwidth]{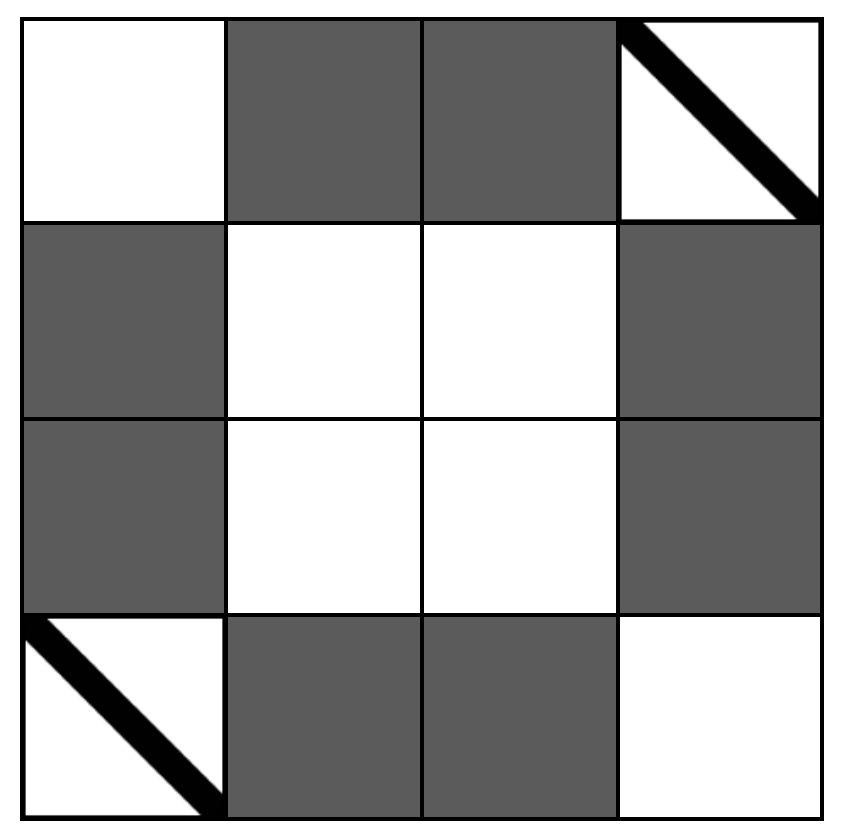}
            & \includegraphics[width=.15\textwidth]{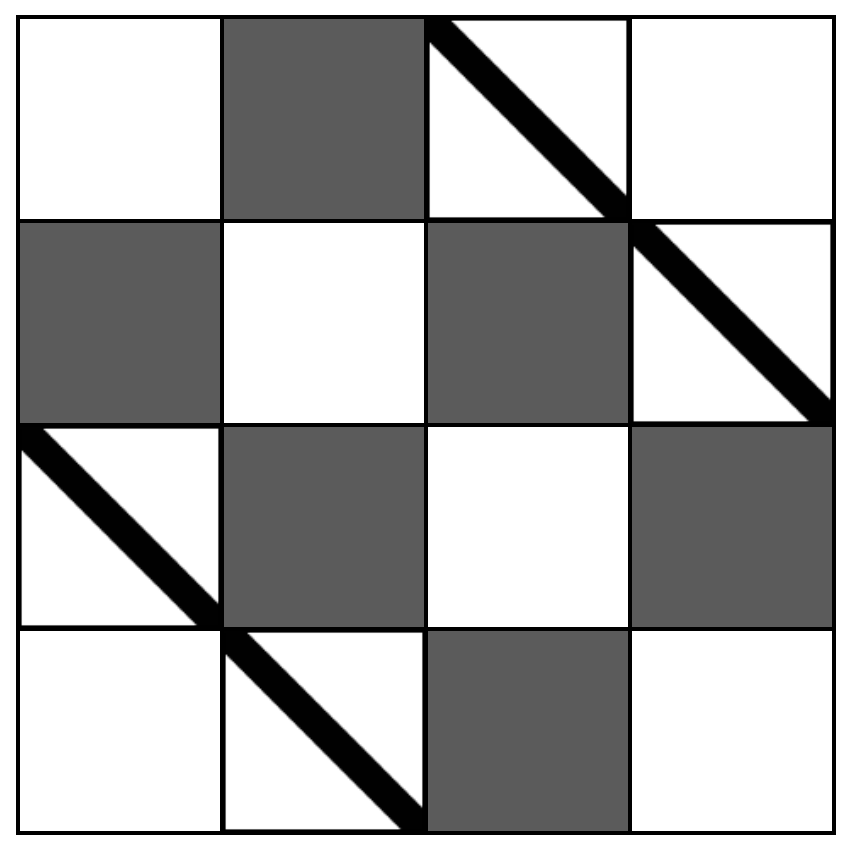}
            & \includegraphics[width=.15\textwidth]{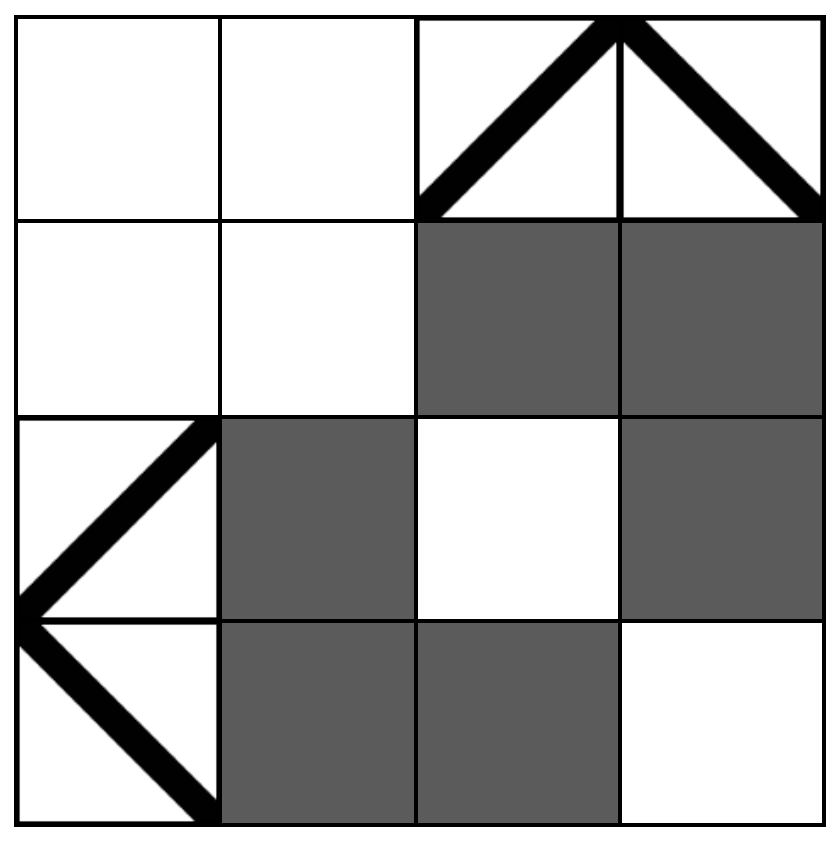} \\
            4
            & \includegraphics[width=.15\textwidth]{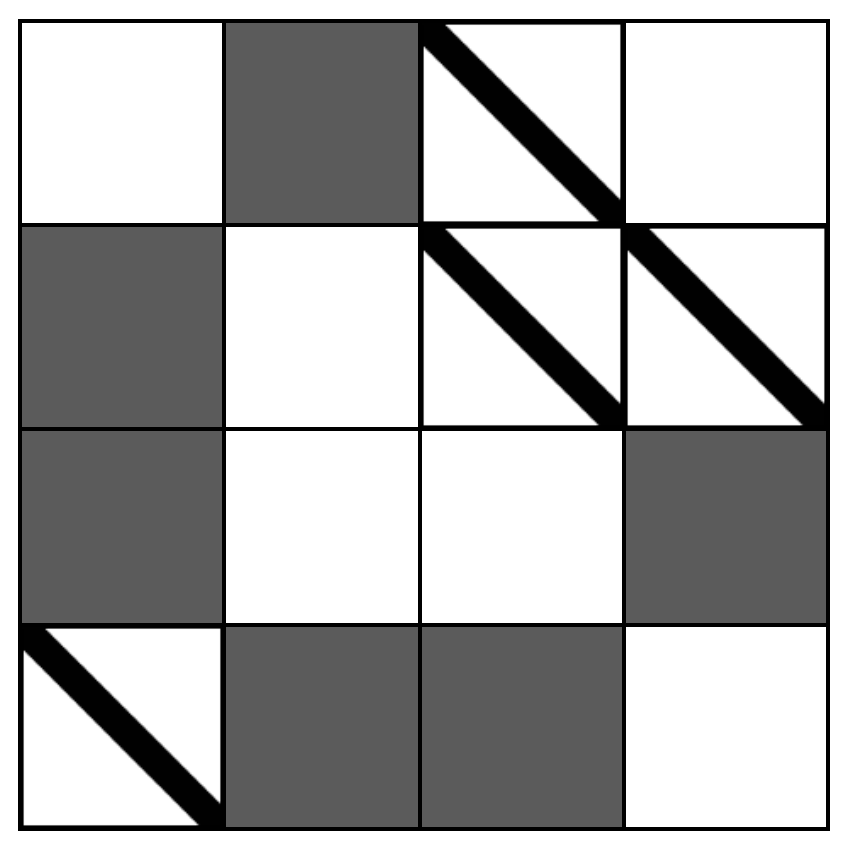}
            & \includegraphics[width=.15\textwidth]{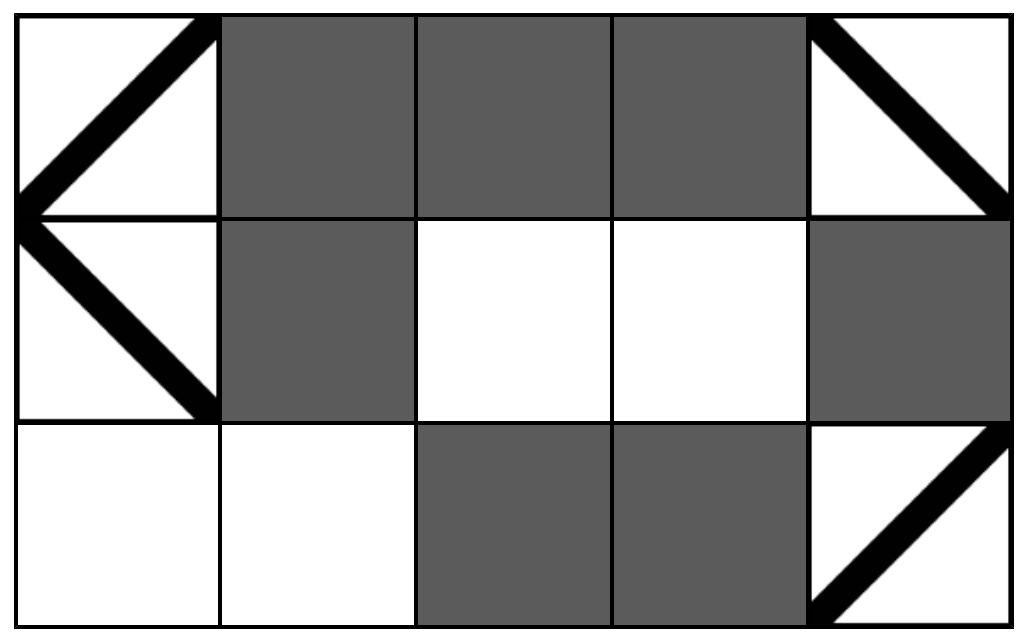}
            & \includegraphics[width=.15\textwidth]{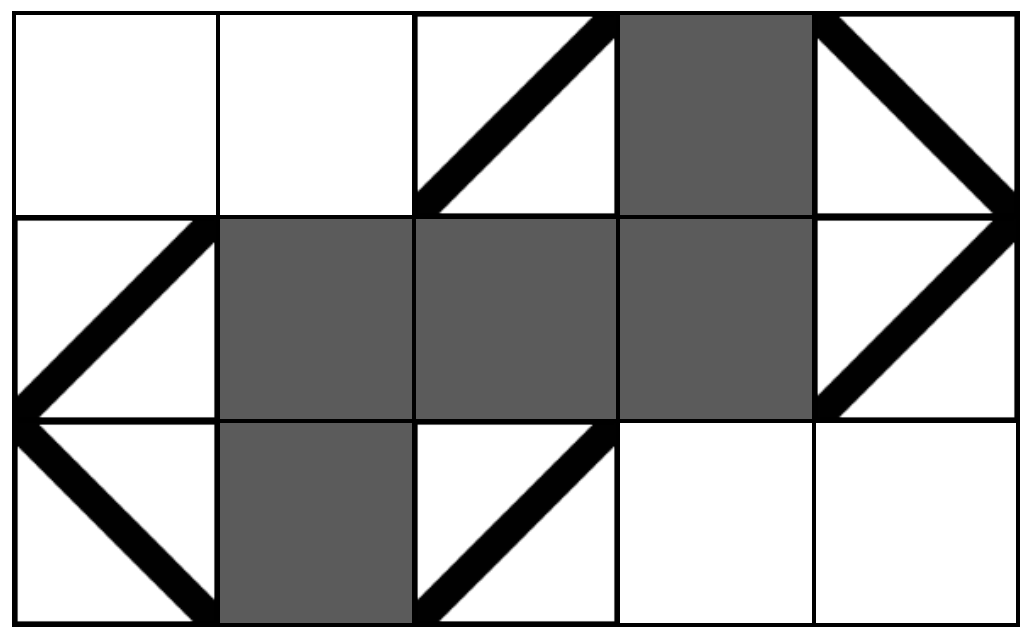}
            & \includegraphics[width=.15\textwidth]{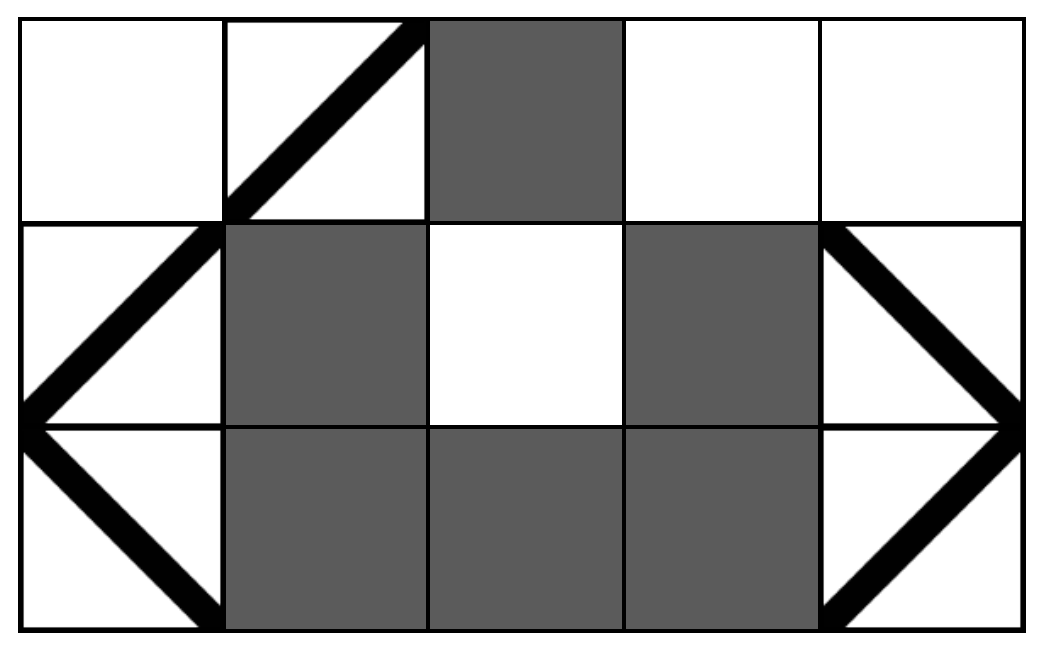}
            & \includegraphics[width=.15\textwidth]{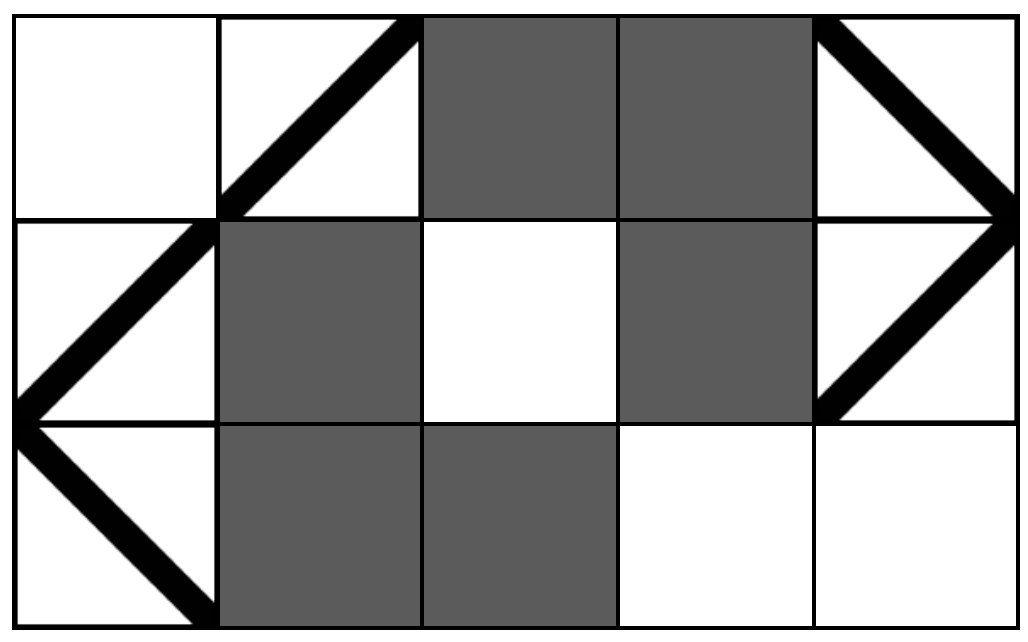} \\
            5
            & \includegraphics[width=.15\textwidth]{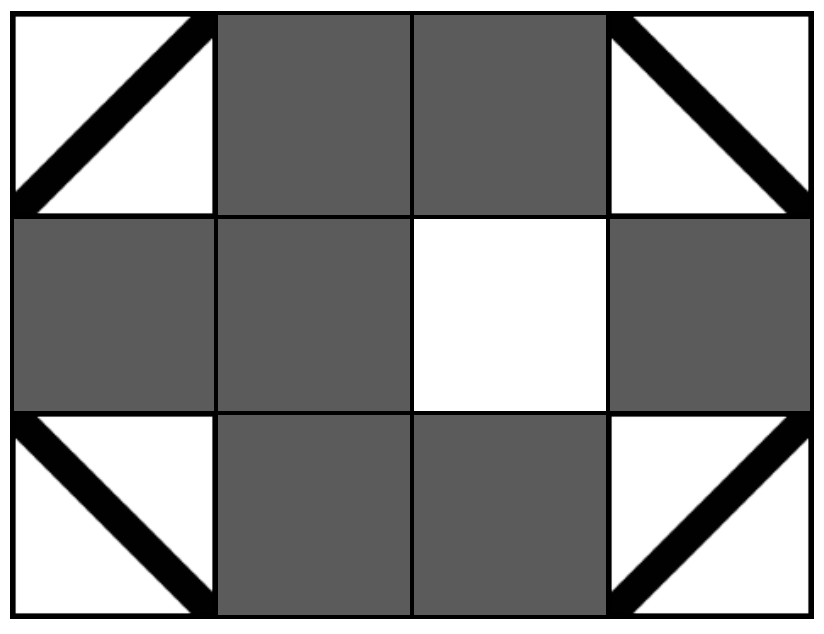}
            & \includegraphics[width=.15\textwidth]{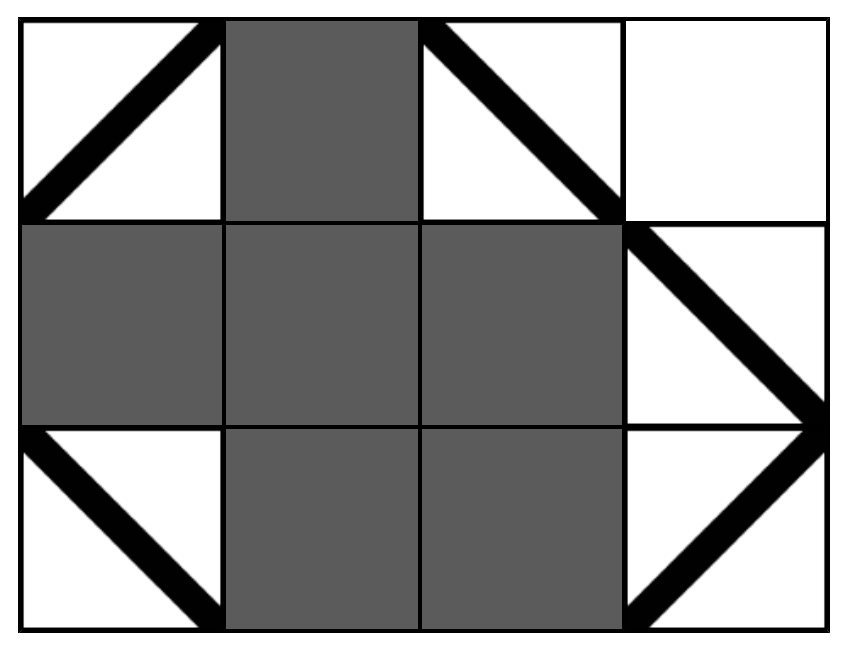}
            & \includegraphics[width=.15\textwidth]{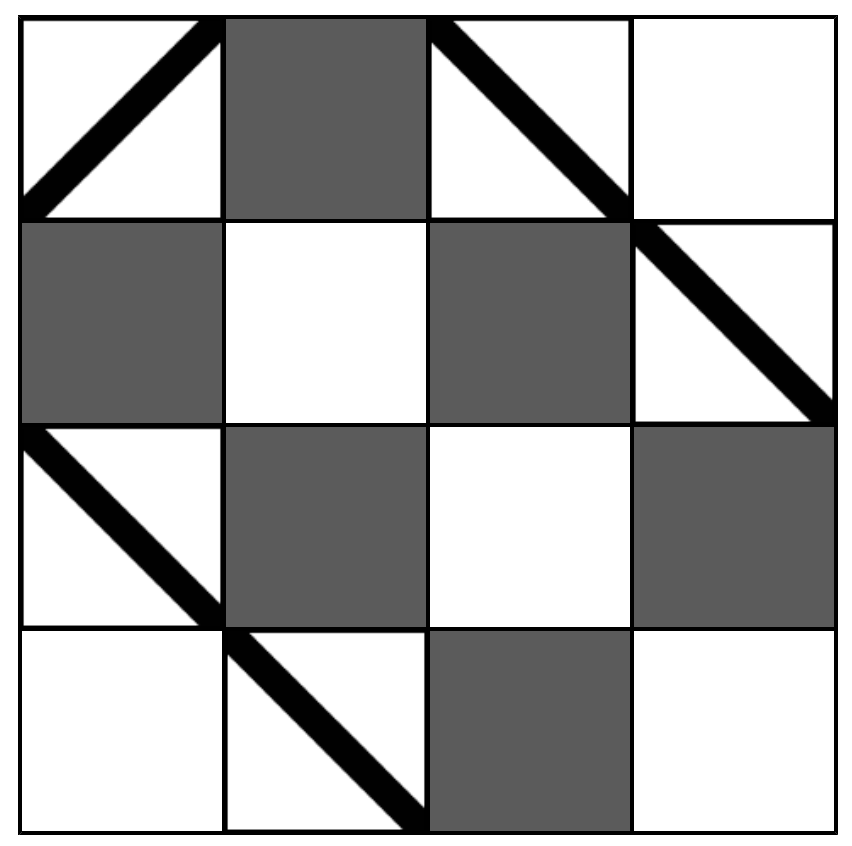}
            & \includegraphics[width=.15\textwidth]{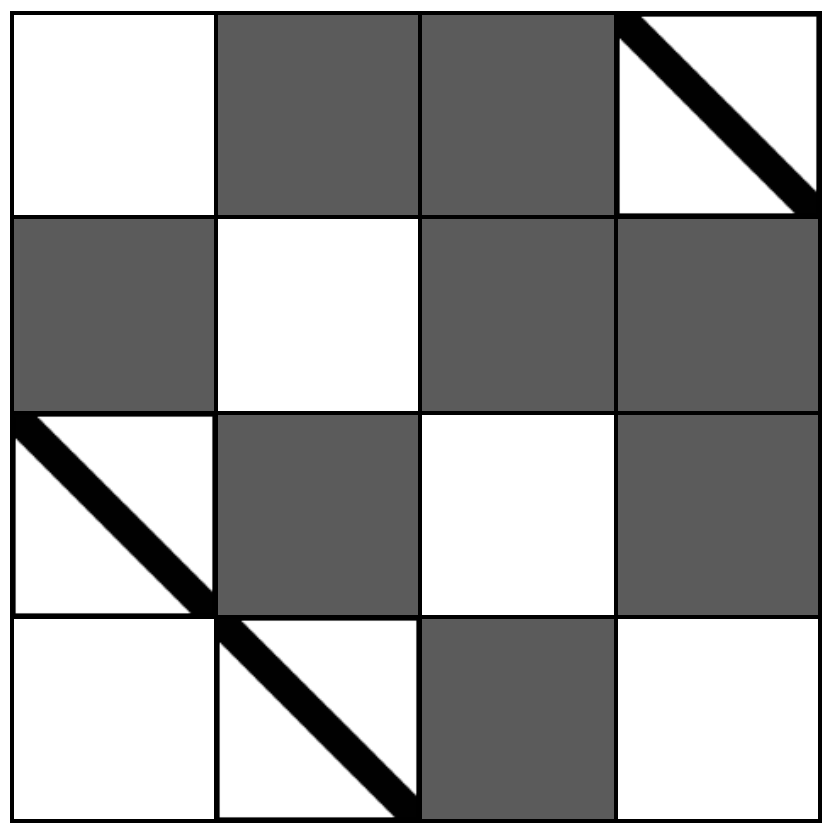}
            & \includegraphics[width=.15\textwidth]{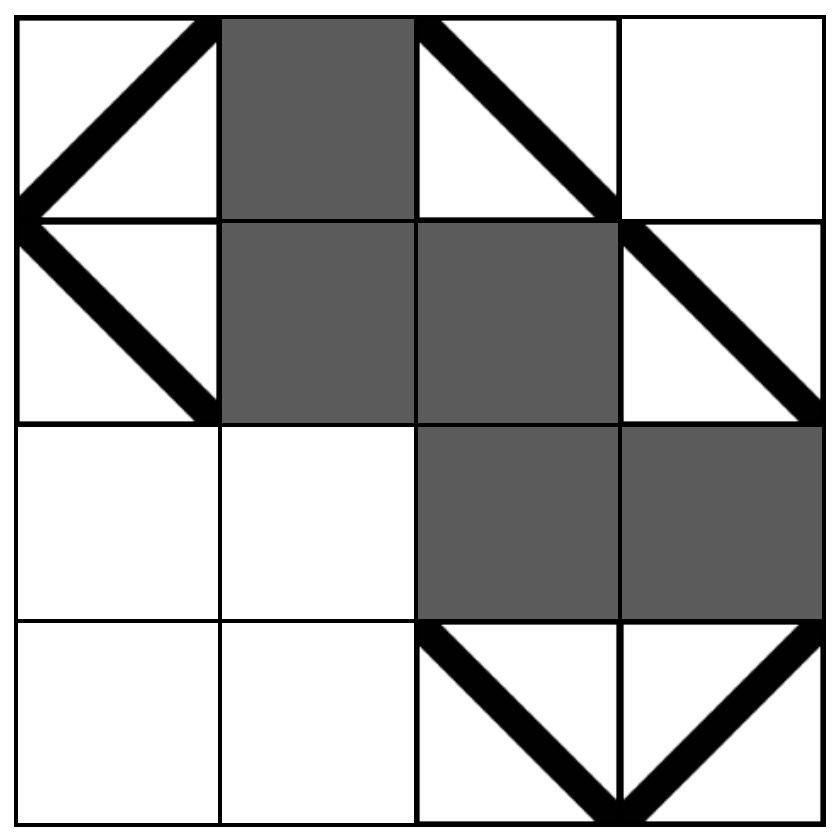} \\
            6
            & \includegraphics[width=.15\textwidth]{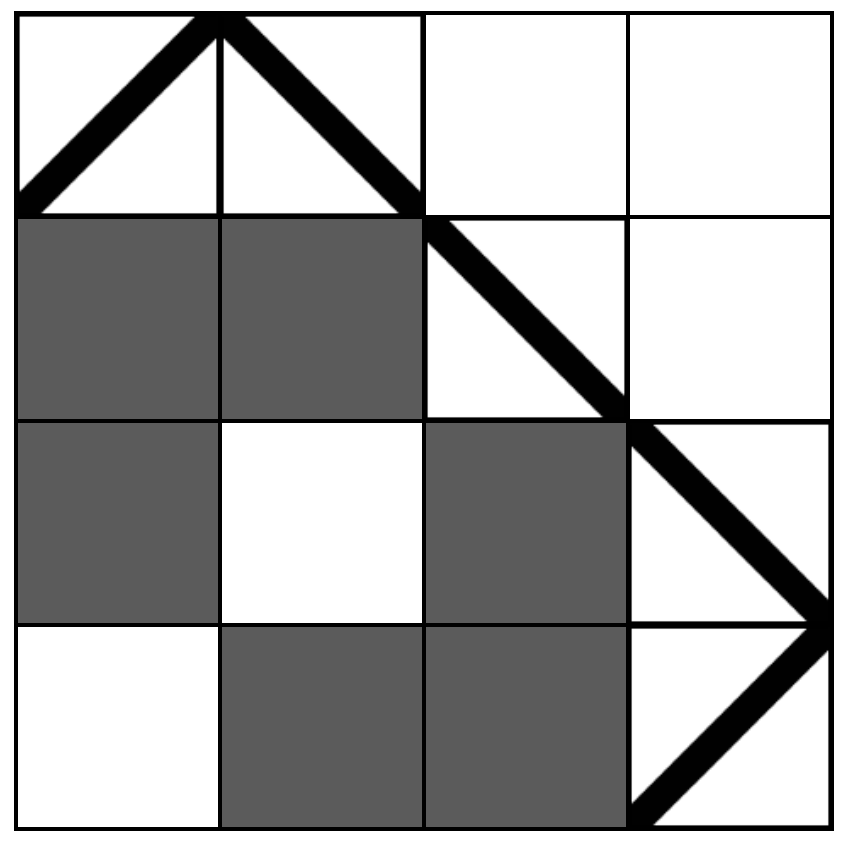}
            & \includegraphics[width=.15\textwidth]{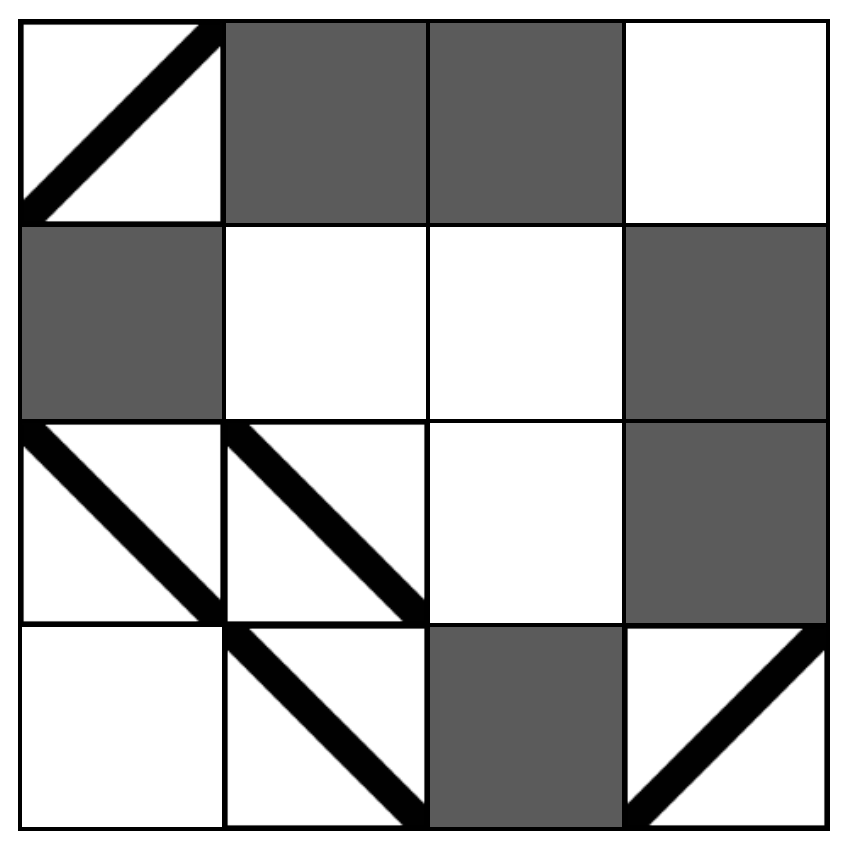}
            & \includegraphics[width=.15\textwidth]{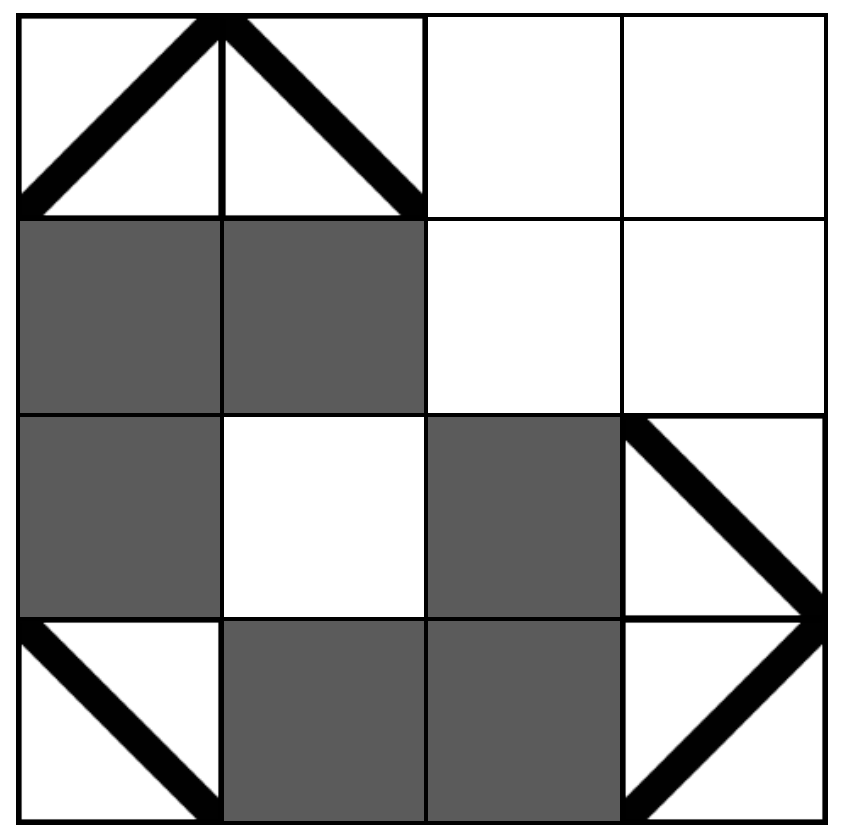}
            & \includegraphics[width=.15\textwidth]{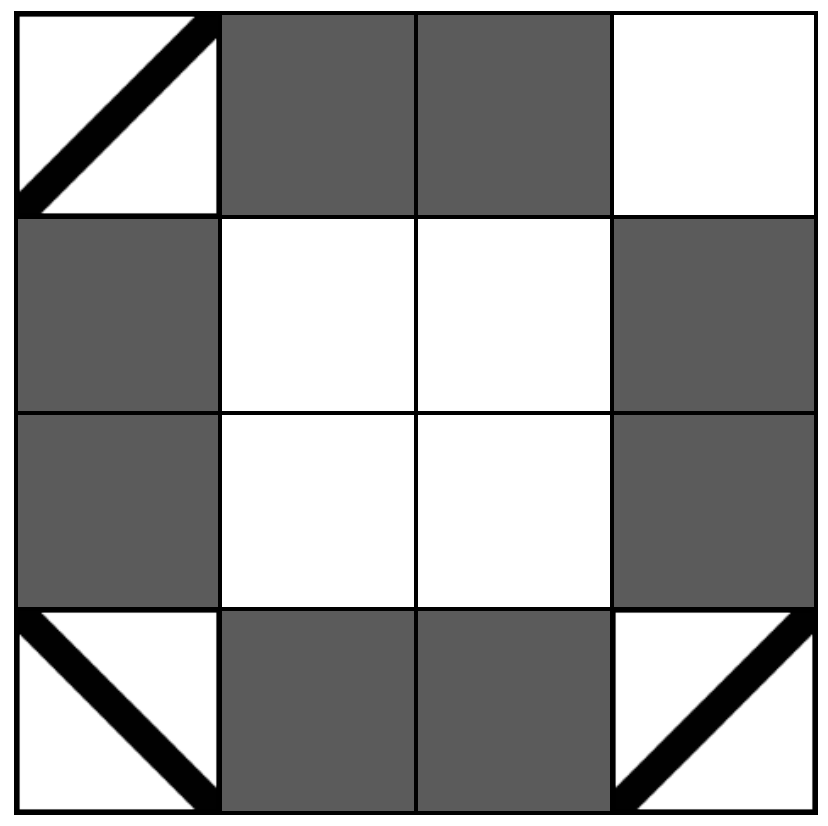}
            & \includegraphics[width=.15\textwidth]{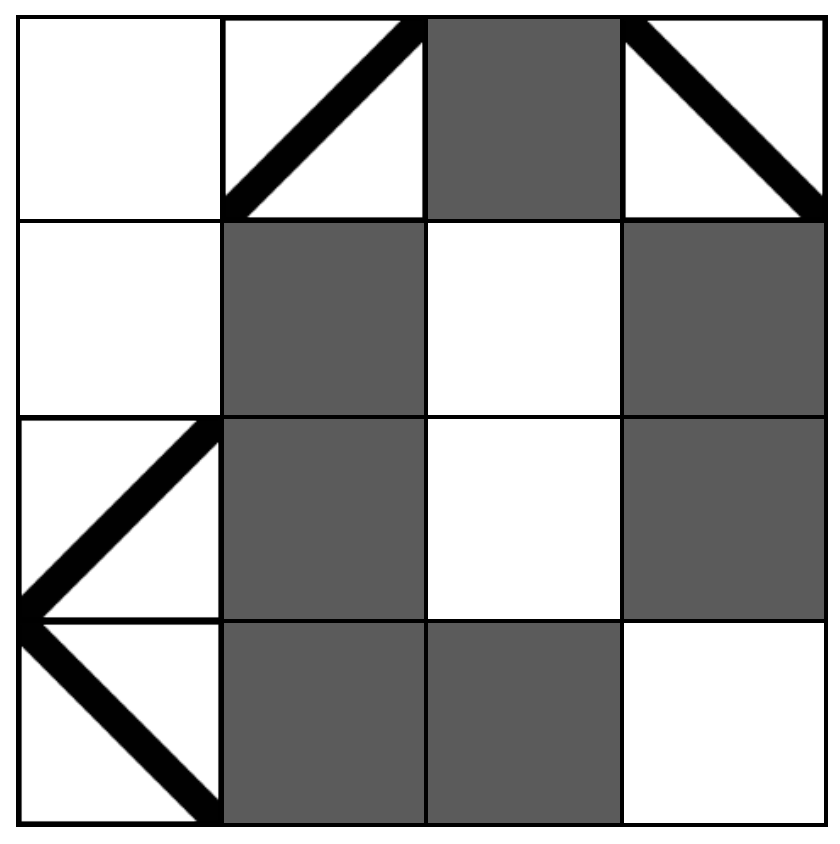} \\
            7
            & \includegraphics[width=.15\textwidth]{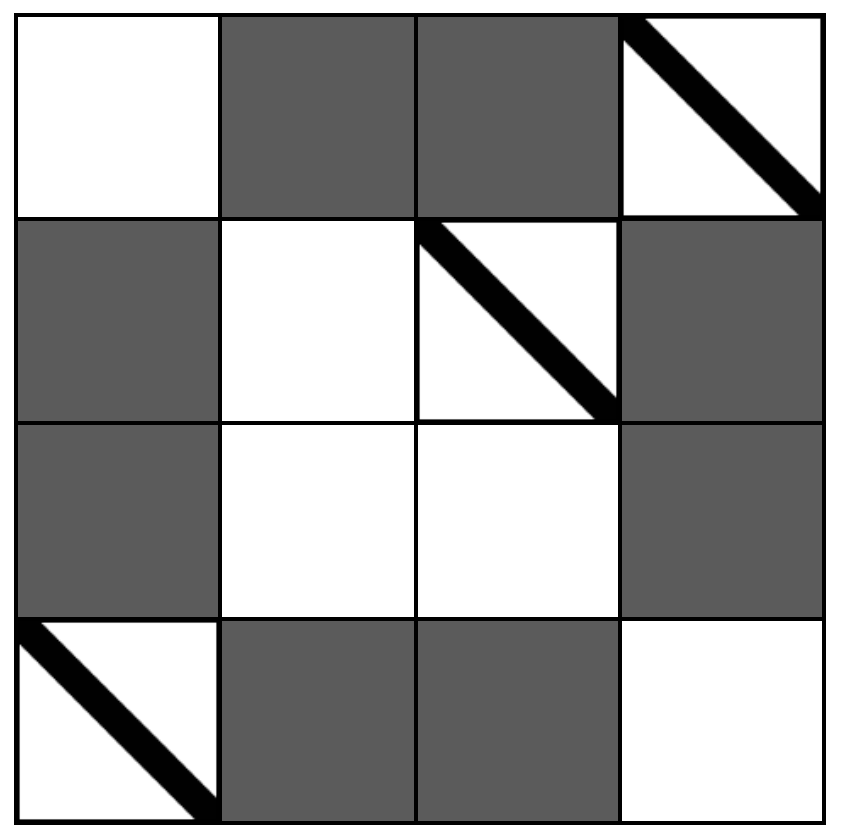}
            & \includegraphics[width=.15\textwidth]{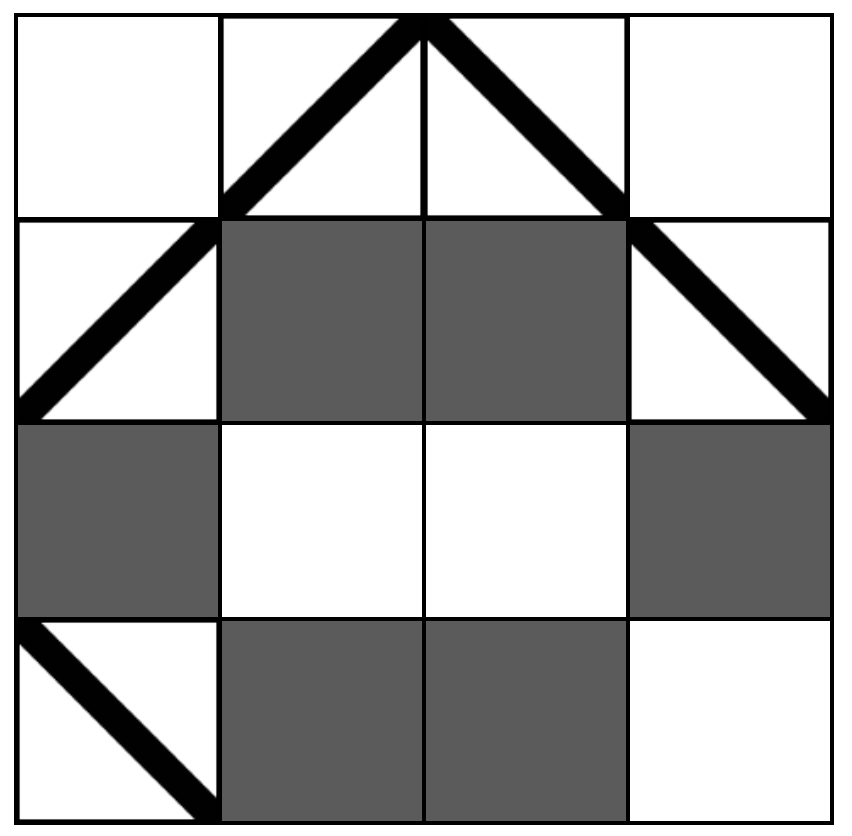}
            & \includegraphics[width=.15\textwidth]{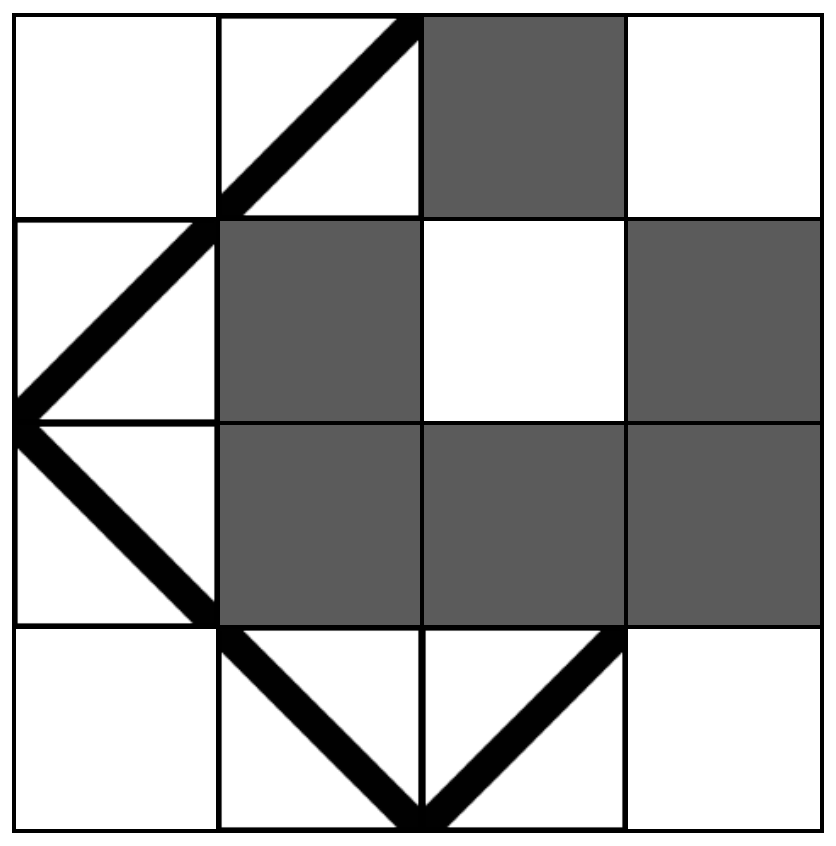}
            & \includegraphics[width=.15\textwidth]{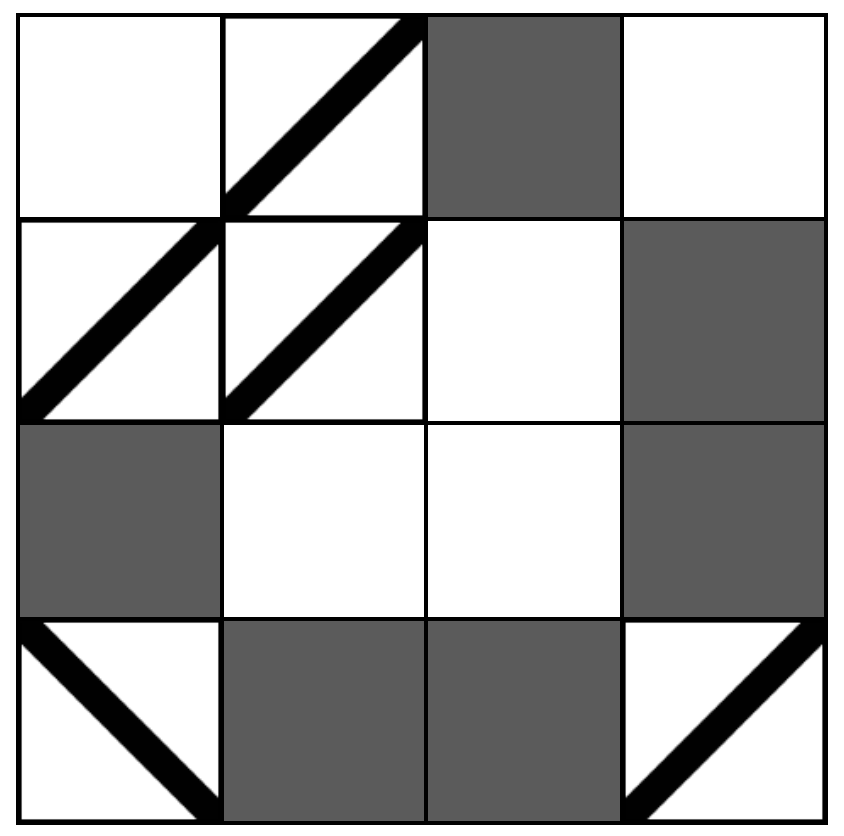}
            & \includegraphics[width=.15\textwidth]{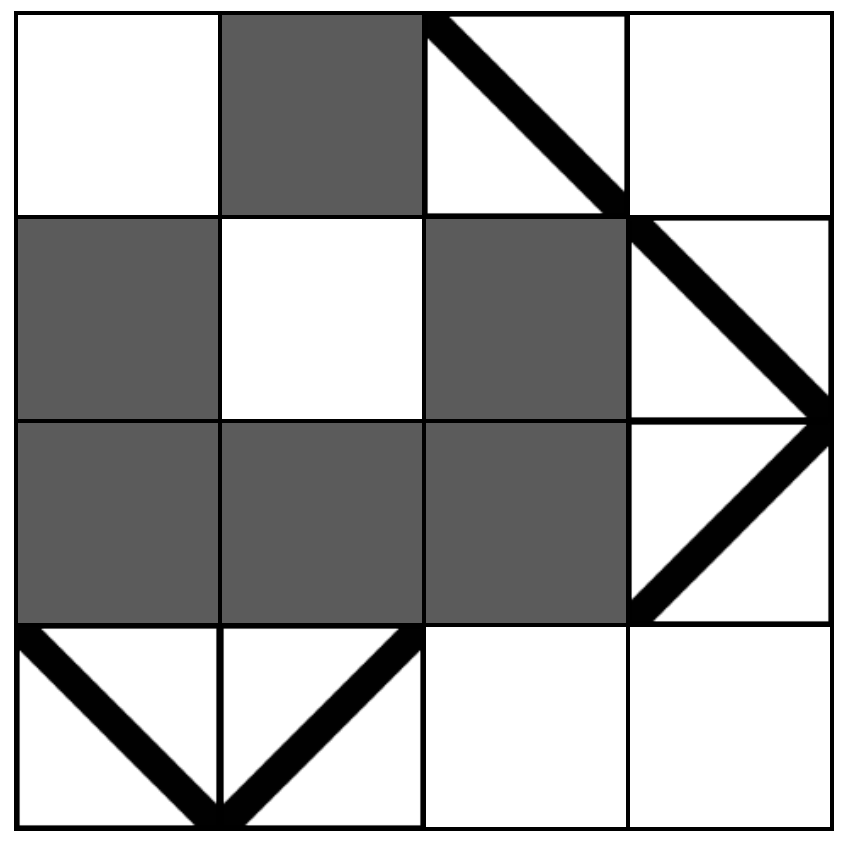} \\
        \end{tabular}
        \caption{Polyominoes from Table \ref{tbl:polyominolist} with corners filled in via Figure \ref{fig:middleist5}.}
        \label{tbl:polyominolistcorners}
    \end{table}
    
    If $L$ has crossing number 3 or less, then it is either the Hopf link (crossing number 2) or the trefoil knot (crossing number 3). Entry 1a has only 2 tiles that were not filled in by Figure \ref{fig:middleist5}, so they must both be crossing tiles. Therefore, $L$ is the Hopf link depicted in Table \ref{tbl:examples}. Since every other polyomino in Table \ref{tbl:polyominolist} contains more than 6 nonempty tiles, we can assume for the rest of the proof that $L$ is not the Hopf link, and hence that any mosaic for $L$ must contain at least 3 crossing tiles.

    By casework, we can rule out entries 1b and 1c. For entry 1d, $L$ is either the trefoil knot or Solomon's knot, as in Table \ref{tbl:examples}. Since the other polyominoes not yet discussed in Table \ref{tbl:polyominolist} contain more than 8 nonempty tiles, we can assume for the rest of the proof that $L$ is not the Hopf link or the trefoil knot or Solomon's knot, and hence that any mosaic for $L$ must contain at least 4 crossing tiles.

    By casework, we can rule out all remaining entries other than 2a, 2b, 3b, 3e, and 5a. For example, examining entry 6a in Table \ref{tbl:polyominolistcorners}, there are only $2$ places where it is possible for crossing tiles to go, less than the required $4$. And in entry 7d, no tile can go in the dark gray slot in the top row without making it apparent that the corner mosaic uses more tiles than necessary.
    
    For entries 2a, 2b, and 3e, $L$ is the connect sum of two Hopf links. One of these Hopf links is shown in Table \ref{tbl:examples}. For entry 3b, $L$ is the cinquefoil knot or the star of David link, as shown in Table \ref{tbl:examples}. For entry 5a, $L$ is the figure-eight knot or the three-twist knot, as shown in Table \ref{tbl:examples}.
\end{proof}

\section*{Acknowledgements}

We thank Boyu Zhang for his advising, Eric Rawdon for his correspondence, and Aaron Heap and his students for Knot Mosaic Maker \cite{maker}, which we used and modified to create many of our figures.

\printbibliography

\end{document}